\newtheorem{theorem}               {Theorem}
\newtheorem{lemma}{Lemma}[section]
\newtheorem{claim}[lemma]{Claim}
\newtheorem{fact}[lemma]{Fact}
\newtheorem{definition} [lemma] {Definition}
\newcommand{\ve}{\varepsilon}
\renewenvironment{proof}[1][\proofname]{\medskip\noindent{\bfseries #1: }}{\hfill$\blacksquare$\medskip}
\title{A Wowzer-Type Lower Bound for the Strong Regularity Lemma}
\author{
Subrahmanyam Kalyanasundaram\thanks{Department of Computer Science and Engineering,
    IIT Hyderabad, India. Email: {\tt subruk@iith.ac.in}. This
    work was done while being a student in School of Computer Science,
    Georgia Institute of Technology, Atlanta, GA 30332.}
\and Asaf Shapira\thanks{School of Mathematics, Tel-Aviv University, Tel-Aviv, Israel 69978,
and Schools of Mathematics and Computer Science, Georgia Institute of Technology,
Atlanta, GA 30332. Email: {\tt asafico@tau.ac.il}. Supported in part by NSF Grant
DMS-0901355, ISF Grant 224/11 and a Marie-Curie CIG Grant 303320.}
}
\begin{document}

\maketitle

\begin{abstract}
The regularity lemma of Szemer\'edi asserts that one can partition every graph into a bounded number of quasi-random bipartite graphs.
In some applications however, one would like to have a strong control on how quasi-random
these bipartite graphs are. Alon, Fischer, Krivelevich and Szegedy obtained a powerful variant of the regularity lemma, which allows one to have an {\em arbitrary}
control on this measure of quasi-randomness. However, their proof only guaranteed to produce a partition where the number of parts
is given by the Wowzer function, which is the iterated version of the Tower function.
We show here that a bound of this type is unavoidable by constructing a graph $H$, with the property that even if one wants a very mild
control on the quasi-randomness of a regular partition, then
the number of parts in any such partition of $H$ must be given by
a Wowzer-type function.
\end{abstract}

\section{Introduction}\label{sec:intro}

The regularity lemma of Szemer\'edi \cite{Sz} is one of the most widely used tools in extremal combinatorics. The lemma was originally
devised as part of Szemer\'edi's proof of his (eponymous) theorem \cite{Sztheo} on arithmetic progressions in dense sets of integers.
Since then it has turned into a fundamental tool in extremal combinatorics, with
applications in diverse areas such as theoretical computer science, additive number theory, discrete geometry and of course graph theory. We refer the reader to \cite{KSim} and its references for more details on the rich history and applications of the regularity lemma.

Let us turn to formally state the regularity lemma. For a graph $G=(V,E)$ and two disjoint vertex sets $A$ and $B$,
we denote by $e_G(A,B)$ the number of edges of $G$ with one vertex in $A$ and one in $B$.
The \emph{density} $d_G(A,B)$ of the pair
$(A,B)$ in the graph $G$ is

\begin{equation}\label{eqdensity}
d_G(A,B)=e_G(A,B)/|A||B|\;,
\end{equation}
that is, $d_G(A,B)$ is the fraction of pairs $(x,y) \in A \times B$ such that $(x,y)$ is an edge of $G$.
For $\gamma> 0$, we say that the pair $(A,B)$ in a graph $G$ is \emph{$\gamma$-regular} if
for any choice of $A'\subseteq A$ of size at least $\gamma|A|$ and $B' \subseteq B$ of size at least $\gamma |B|$,
we have $|d_G(A',B')-d_G(A,B)| \leq \gamma$. Note that a large random bipartite graph is $\gamma$-regular for all $\gamma >0$.
Thus we can think of $\gamma$ as measuring the quasi-randomness of the bipartite graph connecting $A$ and $B$; the smaller $\gamma$ is
the more quasi-random the graph is. We will sometimes drop the subscript $G$ in the above notations when the graph $G$ we are referring to is clear
from context.

Let ${\cal Z}=\{Z_1,\ldots,Z_k\}$ be a partition of $V(G)$ into $k$ sets. Throughout the paper, we will only consider partitions into sets
$Z_i$ of equal size\footnote{In some papers partitions of this type are called {\em equipartitions}.}.
We will refer to each $Z \in {\cal Z}$ as a {\em cluster} of
the partition ${\cal Z}$. The {\em order} of a partition is the number of clusters it has ($k$ above).
We will sometimes use $|{\cal Z}|$ to denote the order of ${\cal Z}$.
We say that a partition ${\cal Z}=\{Z_1,\ldots,Z_k\}$ {\em refines} another partition ${\cal Z}'=\{Z'_1,\ldots,Z'_{k'}\}$
if each cluster of ${\cal Z}$ is contained in one of the clusters of ${\cal Z}'$.

A partition ${\cal Z}=\{Z_1,\ldots,Z_k\}$ of $V(G)$ is said to be $\gamma$-regular if all but $\gamma k^2$ of the pairs
$(Z_i,Z_j)$ are $\gamma$-regular. Szemer\'edi's regularity lemma can be formulated as follows:

\begin{theorem}{\bf (Szemer\'edi \cite{Sz})}\label{thm:sz}
For any $\gamma >0$ and $t$ there is an integer $K=K(t,\gamma)$ with the following property; given a graph $G$ and
a partition ${\cal A}$ of $V(G)$ of order $t$, one can find a $\gamma$-regular partition ${\cal B}$ of $V(G)$ which refines ${\cal A}$ and satisfies
$|{\cal B}| \leq K$.
\end{theorem}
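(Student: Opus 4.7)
The plan is to follow the standard ``energy increment'' approach. Define the \emph{index} (mean-square density) of a partition $\mathcal{P}=\{V_1,\ldots,V_k\}$ of $V(G)$ by
$$\mathrm{ind}(\mathcal{P}) \;=\; \sum_{i,j}\frac{|V_i|\,|V_j|}{|V|^2}\, d_G(V_i,V_j)^2,$$
so that $\mathrm{ind}(\mathcal{P})\in[0,1]$. The first basic observation, proved by Cauchy--Schwarz applied to each rectangle $V_i\times V_j$, is that if $\mathcal{P}'$ refines $\mathcal{P}$ then $\mathrm{ind}(\mathcal{P}')\ge\mathrm{ind}(\mathcal{P})$.

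The heart of the proof is the energy increment lemma: if a pair $(A,B)$ fails to be $\gamma$-regular, witnessed by sets $A'\subseteq A$ and $B'\subseteq B$ with $|A'|\ge\gamma|A|$, $|B'|\ge\gamma|B|$ and $|d_G(A',B')-d_G(A,B)|>\gamma$, then refining $A$ to $\{A',A\setminus A'\}$ and $B$ to $\{B',B\setminus B'\}$ raises the contribution of this pair to the index by at least $\gamma^4\,|A||B|/|V|^2$. Summed over the $\gamma k^2$ irregular pairs of a partition that is not $\gamma$-regular, a simultaneous refinement (in which each cluster is cut by all of the witnesses of the irregular pairs it participates in) produces a refined partition whose index is larger by at least $\gamma^5$.

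Given these ingredients, I would iterate as follows. Start from $\mathcal{A}$. At each step, if the current partition is not $\gamma$-regular, refine every cluster by intersecting with the relevant witnessing subsets; this multiplies the number of clusters by at most $2^{k}$ where $k$ is the current order. Then restore the equipartition condition by cutting each new cluster into blocks of a common small size $m$ and collecting the at most $k2^k$ leftover fragments into a single exceptional set of total size $\le\gamma|V|$. Choosing $m$ small enough makes the loss of index incurred by the re-equipartition much smaller than $\gamma^5$, so the net gain per iteration remains $\Omega(\gamma^5)$. Since $\mathrm{ind}\le 1$, the process halts after at most $O(\gamma^{-5})$ iterations, and the final order $K(t,\gamma)$ is obtained by iterating the map $k\mapsto k\cdot 2^{k}$ that many times, starting from $t$; this is a tower-type bound.

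The main obstacle will be pinning down the energy-increment calculation cleanly --- isolating the $\gamma^4$ gain from a single irregular pair and verifying that the simultaneous cluster-wise refinement aggregates these gains into the claimed $\gamma^5$ --- while simultaneously maintaining the equipartition property, which requires a careful rounding argument and accounting of the exceptional set. Once these two points are settled, the tower bound and the refinement of $\mathcal{A}$ follow mechanically from the iteration.
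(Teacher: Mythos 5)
The paper does not prove this statement; it is Szemer\'edi's regularity lemma, quoted verbatim from \cite{Sz}, and your sketch is precisely the standard energy-increment argument used there. Your outline is correct (index monotone under refinement, $\gamma^4$ gain per irregular pair, $\gamma^5$ gain per round, hence $O(\gamma^{-5})$ rounds and a tower-type bound), with only the routine caveat that to match the paper's equal-size formulation you must redistribute the exceptional set among the clusters rather than keep it separate.
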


Let $T(x)$ be the function satisfying $T(0)=1$ and $T(x)=2^{T(x-1)}$ for $x \geq 1$. So $T(x)$ is a tower of $2$'s of height $x$.
Szemer\'edi's proof of the regularity lemma \cite{Sz} showed that the function $K(t,\gamma)$ can be bounded from above\footnote{We note that in essentially any application of Theorem \ref{thm:sz}, one takes $t$ to be (at least) $1/\gamma$ so some papers simply consider the function $K'(\gamma)=K(1/\gamma,\gamma)$. The reason is that one wants to avoid ``degenerate'' regular partitions into a very small number of parts, where most of the graph's edges will belong to the sets $V_i$ where
one has no control on the edge distribution.} by $T(1/\gamma^5)$. For a long
time it was not clear if one could obtain better upper bounds for $K(t,\gamma)$. Besides being a natural problem, further motivation came from the fact that
some fundamental results, such as Roth's Theorem \cite{Roth,RSz}, could be proved using the regularity lemma. Hence improved upper bounds for $K(t,\gamma)$ might have resulted in improved bounds for several other fundamental problems. In a major breakthrough, Gowers \cite{Gowers} proved that the Tower-type dependence is indeed necessary. He showed that for any $\gamma >0$ there is a graph where any $\gamma$-regular partition must have size at least
$T(1/\gamma^{1/16})$.

Gowers' lower bound \cite{Gowers} can be stated as saying that if one wants a regular partition of order $k$, then
there are graphs in which
the best quasi-randomness measure one can hope to obtain is merely
$1/(\log^{*}(k))^{16}$.
Suppose however that for some $f: \mathbb N \mapsto (0,1)$, we would like to find a partition of a graph of order $k$ that will be ``close''
to being $f(k)$-regular. Alon, Fischer, Krivelevich and Szegedy \cite{afks} formulated the following notion of being close to $f(k)$-regular.

\begin{definition}\label{def:reg}{\bf ($(\epsilon,f)$-regular partition)} Let $f$ be a function $f: \mathbb N \mapsto (0,1)$.
An $(\epsilon,f)$-regular partition
of a graph $G$ is a pair of partitions ${\cal A}=\{V_i~:~ 1 \leq i \leq k\}$ and ${\cal B}=\{U_{i,i'}~:~ 1 \leq i \leq k, 1 \leq i' \leq \ell\}$ of $V(G)$, where
${\cal B}$ is a refinement of ${\cal A}$ (with $U_{i,i'} \subseteq V_i$), satisfying the following:
\begin{enumerate}
\item ${\cal B}$ is $f(k)$-regular.
\item Say that a pair $(V_i,V_{j})$ of clusters of ${\cal A}$ is {\em good} if all but
at most $\epsilon \ell^2$ of pairs $1 \leq i', j' \leq \ell$ satisfy $| d(U_{i,i'}, U_{j,j'}) - d(V_i, V_{j}) | < \epsilon$. Then, at least
$(1-\epsilon){k \choose 2}$ of the pairs $(V_i,V_{j})$ are good.
\end{enumerate}
\end{definition}

One useful way of thinking about the above notion is to ``forget'' for a moment about the partition ${\cal B}$ and just treat partition ${\cal A}$ as an $f(k)$-regular
partition. One then tries to extract some useful information from the assumption that ${\cal A}$ itself is $f(k)$-regular.
Finally, one uses the second property of Definition \ref{def:reg}, which says that the two partitions are {\em similar}, in order to show
that the information deduced from the {\em assumption} that ${\cal A}$ is $f(k)$-regular can actually be deduced from the {\em fact} that ${\cal B}$ is $f(k)$-regular.

One of the main results of \cite{afks} was that given a graph $G$ and {\em any} function $f$, one can construct an $(\epsilon,f)$-regular partition of $G$ of bounded size.
This version of the regularity lemma is sometimes referred to as the {\em strong regularity lemma}. As we have mentioned above,
in order to avoid degenerate partitions we will assume henceforth that an $(\epsilon,f)$-regular partition has order at least $1/\epsilon.$

\begin{theorem}\label{thm:afks}{\bf (Alon et al. \cite{afks})}
For every $\epsilon >0$ and $f:\mathbb N \mapsto (0,1)$, there is an integer $S =S(\epsilon,f)$ such
that any graph $G =(V,E)$ has an $(\epsilon,f)$-regular partition $({\cal A},{\cal B})$ where
$1/\epsilon \leq |{\cal A}|,|{\cal B}| \leq S$.
\end{theorem}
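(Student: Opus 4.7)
The plan is to iteratively apply Szemerédi's regularity lemma while tracking a mean-square-density potential that forces termination after boundedly many iterations. For a partition $\mathcal{P} = \{V_1, \ldots, V_k\}$ of $V(G)$, define the index $q(\mathcal{P}) = \frac{1}{k^2} \sum_{i,j} d(V_i, V_j)^2$, which lies in $[0,1]$ and, by the standard defect Cauchy--Schwarz argument, is nondecreasing under refinement.

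Starting from any partition $\mathcal{A}_0$ of order $k_0 = \lceil 1/\epsilon \rceil$, I would construct a sequence $\mathcal{A}_0, \mathcal{A}_1, \mathcal{A}_2, \ldots$ of successive refinements as follows. Given $\mathcal{A}_i$ of order $k_i$, apply Theorem~\ref{thm:sz} with initial partition $\mathcal{A}_i$ and regularity parameter $\gamma = f(k_i)$ to obtain a refinement $\mathcal{B}_i$ of $\mathcal{A}_i$ that is $f(k_i)$-regular and has order at most $K(k_i, f(k_i))$. If the pair $(\mathcal{A}_i, \mathcal{B}_i)$ already satisfies condition~2 of Definition~\ref{def:reg}, output it; otherwise set $\mathcal{A}_{i+1} = \mathcal{B}_i$ and continue.

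The crux is the following index-increment claim: if the similarity condition fails for $(\mathcal{A}_i, \mathcal{B}_i)$, then $q(\mathcal{B}_i) \geq q(\mathcal{A}_i) + c(\epsilon)$ for some explicit $c(\epsilon) > 0$. Indeed, failure means there are at least $\epsilon \binom{k_i}{2}$ ``bad'' cluster pairs $(V_a, V_b)$ of $\mathcal{A}_i$, each containing at least $\epsilon \ell^2$ sub-pairs of $\mathcal{B}_i$ whose density deviates from $d(V_a, V_b)$ by more than $\epsilon$. A defect-form Cauchy--Schwarz calculation on each bad pair produces a local density-squared gain of at least $\epsilon^3$, and aggregating over all $k_i^2$ cluster pairs yields a total index increment of order $\epsilon^4$. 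Since $q \in [0,1]$, the process must halt within $s = O(1/\epsilon^4)$ iterations.

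The main technical obstacle is making this defect Cauchy--Schwarz calculation quantitatively tight enough to extract an explicit $c(\epsilon)$; the remaining bookkeeping is routine. The bound $S(\epsilon, f)$ arises by iterating the growth $k \mapsto K(k, f(k))$ a total of $s$ times; since $K$ is already tower-type in $1/f(k)$, iterating it $s$ times yields a Wowzer-type function, consistent with the lower bound the paper establishes. Crucially, the number of iterations $s$ depends only on $\epsilon$ and not on $f$, so the final partition pair $(\mathcal{A}, \mathcal{B})$ satisfies $1/\epsilon \leq |\mathcal{A}|, |\mathcal{B}| \leq S(\epsilon, f)$, as required.
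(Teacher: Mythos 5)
Your proposal is correct and is essentially the proof of Alon, Fischer, Krivelevich and Szegedy that the paper sketches at the start of Section~\ref{sec:main}: iterate Theorem~\ref{thm:sz} with $\gamma=f(k_i)$, and use the mean-square-density (index) increment of order $\epsilon^4$ forced by failure of condition~2 of Definition~\ref{def:reg} to bound the number of iterations by a function of $\epsilon$ alone. The defect Cauchy--Schwarz computation you outline is exactly the standard one and goes through for equipartitions without difficulty.
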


Let us describe two cases where one needs to have a better control of the measure of quasi-randomness of a regular partition. A first example is when
proving certain variants of the graph removal lemma \cite{RSz}. In such a scenario we are given a regular partition and would like to be able to say that since the partition behaves in
a quasi-random way, then we can find ``small'' subgraphs that we expect to find in a truly random graph. The only problem is that as the ``small'' structure we are trying
to find becomes larger, we need the measure of quasi-randomness to decrease with it. Some examples where Theorem \ref{thm:afks} was used to overcome such difficulties can be
found in \cite{afks,ASher,ASS,ARS,KNR,SR}. We note that in some of these papers, Theorem \ref{thm:afks} was used with functions $f$ that go to zero extremely fast, so the ability to
apply the theorem with arbitrary functions was crucial.

Another example when one wants a better control of the measure of quasi-randomness is when the graph we are trying to partition is very sparse. It is not hard to see
that for the notion of $\gamma$-regularity to make sense, the graph should have density at least $\gamma$. A well known case where one is faced
with increasingly sparse graphs is in the proofs of the hypergraph regularity lemma, that were obtained independently by Gowers \cite{Go2}, by R\"odl et al. \cite{FR,NRS,RS3} and
later also by Tao \cite{Tao}. In those proofs, one is partitioning not only the vertices of the hypergraph (as in Theorem \ref{thm:sz}) but also the pairs
of vertices into quasi-random bipartite graphs. However, in the process these bipartite graphs become sparser so one needs to control their quasi-randomness as a
function of their density.
Out of the aforementioned proofs of the hypergraph regularity lemma, Tao's
proof actually uses Theorem \ref{thm:afks} in order to address this issue.
See the survey of Gowers \cite{Go2} for an excellent account of this.

We finally note that the strong regularity lemma is also related to the notion of the limit of convergent graph sequences defined and studied in \cite{LSetc}.
Without defining these notions explicitly, we just mention that many of the results mentioned above which were proved using Theorem \ref{thm:afks}, were later
reproved using graph limits, see e.g. Lov\'asz and Szegedy \cite{LS2}. Furthermore, some of the important properties of the limit of a convergent graph sequence, such as its uniqueness \cite{LS}, also hold for $(\epsilon,f)$-regular partitions, see \cite{ASS}. Hence, one can view an $(\epsilon,f)$-regular partition as the discrete analogue of the (analytic) limit of a convergent graph sequence.

Let $W(x)$ be the function satisfying $W(0)=1$ and $W(x)=T(W(x-1))$ for $x \geq 1$. So the function $W$ is an iterated version of the Tower function $T(x)$.
The function $W$ is sometimes referred to as the Wowzer\footnote{This name was coined by Graham, Rothschild and Spencer \cite{GRS}.} function (for obvious reasons).
The proof of Theorem \ref{thm:afks} in \cite{afks} gave a $W$-type upper bound for the function $S(\epsilon,f)$ in Theorem \ref{thm:afks}. As we have mentioned above, in some applications of this
lemma one uses functions $f$ that go to zero extremely fast. But in some cases, as was the case in \cite{afks}, one uses moderate functions like $f(x)=1/x^2$.
However, even when the function $f$ is $f(x)=1/x$, the upper bound given in \cite{afks} for the function $S(\epsilon,f)$ is (roughly) $W(1/\epsilon)$.
Hence it is natural to ask if better bounds can be obtained for such versions of Theorem \ref{thm:afks}. Our main result here is that a $W$-type dependence is unavoidable even
in this case.

\begin{theorem}\label{thm:main}
Set $f(x)=1/x$. For every small enough $\epsilon \leq c_0$ there is a graph $H$ with the following property:
If $({\cal A},{\cal B})$ is an $(\epsilon,f)$-regular partition of $H$, and\footnote{As we have mentioned before, in order to rule out degenerate
partitions (such as taking a partition into 1 set) we assume that $|{\cal A}| \geq 1/\epsilon$. A similar assumption was used in \cite{afks}, where
they assume that $f(x) \leq \epsilon$. These two assumptions are basically equivalent (recall that $f(x)=1/x$), but the one we use makes the notation somewhat simpler.} $|{\cal A}| \geq 1/\epsilon$, then $|{\cal A}| \geq W(\sqrt{\log(1/\epsilon)}/100)$.
\end{theorem}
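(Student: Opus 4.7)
The strategy is to adapt Gowers' construction giving the tower-type lower bound for Szemer\'edi's regularity lemma, by iterating it $w = \lfloor \sqrt{\log(1/\epsilon)}/100 \rfloor$ times so that the required partition size grows Wowzer-like. I would build the graph $H$ hierarchically with $w$ ``levels'': fix a sequence $1 = k_0 \ll k_1 \ll \cdots \ll k_w$ of integers growing like the Wowzer function, and a nested family of vertex partitions $\mathcal{P}_0 \succeq \mathcal{P}_1 \succeq \cdots \succeq \mathcal{P}_w$ of $V(H)$ with $|\mathcal{P}_i| = k_i$, each $\mathcal{P}_{i+1}$ refining $\mathcal{P}_i$. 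Between any two parts of $\mathcal{P}_{i+1}$ lying in the same part of $\mathcal{P}_i$, we place a random-like bipartite graph mimicking Gowers' construction at scale $\gamma_i = 1/k_i$, with edge densities near $1/2$ but shifted by quantities of order $\gamma_{i-1}$. The design ensures that the only way to detect the level-$i$ structure via a $\gamma_{i-1}$-regular partition is to refine down essentially to $\mathcal{P}_i$, exactly as in Gowers' original lower bound.

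The key structural claim, proved by induction on $i$, is: if $(\mathcal{A},\mathcal{B})$ is an $(\epsilon,f)$-regular partition of $H$ with $|\mathcal{A}| = k$ and $k_i \leq k < k_{i+1}$ for some $i < w$, then one can derive a contradiction. The mechanism uses both partitions in Definition \ref{def:reg} in an essential way. On one hand, $\mathcal{B}$ is $f(k) = 1/k$-regular, and since $1/k \leq \gamma_i$, the Gowers-type argument applied to the portion of $H$ constructed at level $i+1$ forces $\mathcal{B}$ to refine $\mathcal{P}_{i+1}$ up to a small exceptional vertex set. On the other hand, the similarity condition (the goodness of pairs) guarantees that for most pairs $(V_a, V_b)$ in $\mathcal{A}$ the density $d(V_a, V_b)$ is within $\epsilon$ of the average of $d(U_{a,a'}, U_{b,b'})$ over most $(a',b')$; that average reflects the level-$(i+1)$ structure that $\mathcal{B}$ has just been shown to detect. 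Combining these two halves, $\mathcal{A}$ itself must almost refine $\mathcal{P}_{i+1}$, hence $|\mathcal{A}| \geq k_{i+1}$, contradicting $k < k_{i+1}$. Running the induction to $i = w-1$ gives $|\mathcal{A}| \geq k_w \geq W(w)$.

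The main obstacle will be quantitative: propagating the errors across the $w$ levels. At each level Gowers' argument loses a polynomial factor in the regularity parameter, and the goodness condition of Definition \ref{def:reg} contributes an additional $\epsilon$ error per level from the bad pairs. Because the iteration runs $w$ times and all the exceptional fractions must remain below a small absolute constant, the parameters have to be tuned so that the total accumulated loss stays $O(1)$. This is exactly why $w$ is only $\sqrt{\log(1/\epsilon)}/100$ rather than $\log(1/\epsilon)$: one factor of $\sqrt{\log(1/\epsilon)}$ is absorbed by the per-level Gowers-type loss, and another by the per-level $\epsilon$ budget from the goodness condition. Ensuring that the shifts $\pm \gamma_i$ at consecutive levels remain distinguishable after these errors compound, and making precise the transfer of $\mathcal{B}$'s refinement structure back to $\mathcal{A}$ via Definition \ref{def:reg}(2), is where the bulk of the technical work will lie.
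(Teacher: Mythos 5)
Your high-level architecture matches the paper's: iterate a Gowers-type construction along a Wowzer-spaced hierarchy of canonical partitions, use the $f(k)$-regularity of ${\cal B}$ to force ${\cal B}$ to (approximately) refine a much finer canonical partition, and then use the goodness condition of Definition \ref{def:reg} to derive a contradiction with $|{\cal A}|=k$. But there are two concrete gaps. First, the scale of your density shifts is wrong for defeating condition (2). You propose shifts ``of order $\gamma_{i-1}=1/k_{i-1}$'' between consecutive levels; since every relevant $k_i$ is at least $1/\epsilon$, these shifts are at most $\epsilon$, i.e., inside the tolerance of the goodness condition (which only requires $|d(U_{i,i'},U_{j,j'})-d(V_i,V_j)|<\epsilon$), so they cannot make a pair fail to be good. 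The paper resolves this by using \emph{two} separate weight scales: tiny weights $4^{-r}/4^{\sqrt{\log(1/\epsilon)}}$ for the Gowers irregularity (which only needs to beat $f(k)=1/k$), and much larger ``trap'' weights $4^{-g}\geq 4^{-\sqrt{\log(1/\epsilon)}/48}\gg\epsilon$, placed only on the sparse Wowzer-spaced subsequence of levels, whose job is specifically to split the densities $d(U_{i,i'},U_{j,j'})$ into two well-separated groups and thereby kill goodness. (Your accounting of why $w\approx\sqrt{\log(1/\epsilon)}$ is also off: the constraint comes from requiring the smallest of the geometrically decaying trap weights to still exceed $\epsilon^{\Theta(1)}$, not from error accumulation over levels.)

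Second, and more importantly, once these larger weights are introduced they destroy the monotone-decay property on which Gowers' lower-bound argument rests: a discrepancy created by the level-$r$ structure can now be cancelled by the much heavier weights sitting at deeper levels, so pairs that ``should'' be irregular may become regular. Handling this interference is where essentially all of the paper's work lies: the traps must be graphs satisfying strong quasirandomness not just globally but against subsets of prescribed sizes inside every cluster of every coarser canonical partition (Definition \ref{def:trap}), and one needs the convex-combination machinery (Lemmas \ref{convcomb}, \ref{lem:random}, \ref{lem:random3}) to show that the cancellation can only happen for few pairs (Lemmas \ref{clm:oneset} and \ref{clm:key}). Your proposal treats the difficulty as routine error propagation and offers no substitute for this mechanism, so as it stands the inductive step (``the Gowers-type argument forces ${\cal B}$ to refine ${\cal P}_{i+1}$'') is unjustified in the presence of the very structure you must add to make the goodness condition fail. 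Relatedly, your phrasing ``goodness forces ${\cal A}$ to almost refine ${\cal P}_{i+1}$'' is not the operative mechanism: the paper shows instead that goodness simply \emph{fails} for almost all pairs $(V_i,V_j)$ whenever a trap sits strictly between the scales of ${\cal A}$ and ${\cal B}$.
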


An interesting aspect of our proof is that it gives the same lower bound even if one considers a much weaker condition than the second condition
in Definition \ref{def:reg}. What we show is that the lower bound of Theorem \ref{thm:afks} holds even if one wants only $\epsilon^{1/10}{k^2}$ of the pairs $(V_i,V_j)$ to be good. Observe that Definition \ref{def:reg} asks\footnote{We note that the application of Theorem \ref{thm:afks} in \cite{afks} (as well as in most other papers) critically relied on the partition having $(1-\epsilon){k \choose 2}$ good pairs.} for $(1-\epsilon){k \choose 2}$ good pairs! In other words, the lower bound holds even if one
is interested in having a very weak similarity\footnote{Recall the discussion following Definition \ref{def:reg}.} between the partitions ${\cal A}$ and ${\cal B}$.

Another interesting aspect of the proof of Theorem \ref{thm:main} is that by resetting the parameters appropriately, one can get $W$-type lower bounds for $(\epsilon,f)$-regularity
for any function $f: \mathbb N \mapsto (0,1)$ going to zero faster that $1/\log^{*}(x)$. Observe that this is not a caveat of the proof; when $f(x)=1/\log^{*}(x)$, Theorem \ref{thm:sz} can be formulated
as saying that any graph has an $(\epsilon,f)$-regular partition of order $T(1/\epsilon^5)$. Hence, one cannot obtain a $W$-type lower bound for $f$ of this type.
So we see that even if one wants to have a very weak relation between the order of ${\cal A}$ and the regularity measure of ${\cal B}$ (say, $1/\log\log(k)$) one would still have to use a partition of size given by a $W$-type function\footnote{But in such cases the bound might become $W(\log\log(1/\epsilon))$ or some other $W$-type function.}.

The ideas we use here in order to prove Theorem \ref{thm:main} appear to be useful also for proving $W$-type lower bounds
for the hypergraph regularity lemma \cite{FR,Go1,Go2,NRS,RS3,Tao}. As we explained above, in this case also one is faced with the need to control a measure of quasi-randomness
approaching $0$, and this seems to be the main reason why the current bounds for this lemma are of $W$-type. This investigation is part of
a joint work of the second author with Dellamonica and R\"odl.

\paragraph{Organization:}
The rest of the paper is organized as follows. In the following section we describe the graph $H$ which we use in proving Theorem \ref{thm:main}.
In Section \ref{sec:main} we give an overview of the proof, state the two key lemmas that are needed to prove Theorem \ref{thm:main} and then derive Theorem \ref{thm:main} from them.
In Section \ref{sec:trap} we prove several preliminary lemmas that we would later use in the proofs of the two key Lemmas. In
Sections \ref{sec:lem2} and \ref{sec:lem1} we prove the key lemmas stated in Section \ref{sec:main}.

\section{A Hard Graph for the Strong Regularity Lemma}\label{sec:construct}

In this section we describe the graph $H$ which will have the properties asserted in Theorem \ref{thm:main}.
The description will be somewhat terse; the reader can find in Section \ref{sec:main} an overview of the proof of Theorem \ref{thm:main},
which includes some intuition/motivation for the way we define $H$.

\subsection{A weighted reformulation of Theorem \ref{thm:main}}\label{subsec:reform}

Suppose $P$ is a weighted complete graph, where each edge $(x,y)$ is assigned a weight $d_P(x,y) \in [0,1]$.
For two sets $A,B$ define the weighted density between $A,B$
\begin{equation}\label{eq:weighted}
d_P(A,B)=\sum_{x \in A, y \in B}d_P(x,y)/|A||B|\;.
\end{equation}
Note that if we think of a graph as a weighted complete graph with $0/1$ weights then the above definition coincides
with the definition of $d_G(A,B)$ given in (\ref{eqdensity}). Also note that when $A=\{x\}$, $B=\{y\}$ are just two vertices
then $d_P(A,B)$ is just the weight $d_P(x,y)$ assigned to $(x,y)$ as above.
The following simple claim follows immediately from a standard application of Chernoff's inequality.

\begin{claim}\label{clm:weight}
Let $\zeta > 0$.
Suppose $P$ is a weighted complete graph with weights in $[0,1]$, and $H$ is a random graph, where each edge $(x,y)$ is chosen independently to be included in $H$
with probability $d_P(x,y)$.
Then with probability at least
$1/2$ we have
$$
|d_H(A,B)-d_P(A,B)| \leq \zeta\;,
$$
for all sets $A,B$ of size at least $20\zeta^{-2}\log(n)$.
\end{claim}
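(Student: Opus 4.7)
The claim is a standard application of Hoeffding's inequality combined with a union bound, so the plan is short and there is no real obstacle.

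First, I would fix any pair of subsets $A, B \subseteq V$ with $|A| = a$ and $|B| = b$. Because the edges of $H$ are drawn independently, the edge count $e_H(A, B) = \sum_{x \in A, y \in B} \mathbf{1}[(x,y) \in E(H)]$ is a sum of $ab$ independent $\{0,1\}$-valued random variables whose total expectation equals $\sum_{x \in A, y \in B} d_P(x, y) = ab \cdot d_P(A, B)$. Hoeffding's inequality (the additive Chernoff bound for sums of independent Bernoullis) then yields
$$\Pr\bigl[\,|d_H(A, B) - d_P(A, B)| > \zeta\,\bigr] \;\leq\; 2\exp(-2\zeta^2 ab).$$

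Next, I would union-bound this over all pairs $(A, B)$ with $a, b \geq m := 20 \zeta^{-2}\log n$. The number of such pairs with prescribed sizes is at most $\binom{n}{a}\binom{n}{b} \leq n^{a+b}$. The key elementary estimate is $ab \geq m \cdot \max(a, b) \geq \tfrac{m}{2}(a+b)$, which holds whenever $\min(a, b) \geq m$. Combined with $m \zeta^2 = 20 \log n$, this gives $2\zeta^2 ab \geq 20(a+b)\log n$, so each summand of the union bound is bounded by
$$2\, n^{a+b} \exp\bigl(-20(a+b)\log n\bigr) \;=\; 2\, n^{-19(a+b)}.$$
Summing the resulting (doubly) geometric series over $a, b \geq m$ gives a total failure probability of at most $4 n^{-38 m}$, which is comfortably below $1/2$ for every non-trivial $n$.

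The only place requiring any care is the balance between the Hoeffding exponent $2\zeta^2 ab$ and the log of the number of pairs $(a+b)\log n$. The constant $20$ in the definition of $m$ is chosen precisely to force $2\zeta^2 ab$ to dominate $(a+b)\log n$ by a wide constant factor, uniformly over all sizes $a, b \in [m, n]$; this is what makes the crude bound $\binom{n}{a} \leq n^a$ sufficient and lets the whole argument go through in one line each for the concentration step and the union bound.
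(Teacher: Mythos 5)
Your proof is correct and is precisely the ``standard application of Chernoff's inequality'' (additive Hoeffding bound plus a union bound over all pairs of large sets) that the paper invokes without writing out; the choice of the constant $20$ and the estimate $ab\geq \tfrac{m}{2}(a+b)$ are exactly what make the union bound close. No further comment is needed.
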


It is clear that we can prove Theorem \ref{thm:main} by constructing an arbitrarily large graph, such that the number
of vertices $n$ will be much larger than all the constants involved. Hence, by the above claim, we see that in order
to prove Theorem \ref{thm:main} it is enough to construct a {\em weighted} graph $H$ satisfying
the condition of the theorem with respect to the notion of $d(A,B)$ defined in (\ref{eq:weighted}).
The reason is that by Claim \ref{clm:weight}, if we have a weighted graph $H$ satisfying Theorem \ref{thm:main},
then a random graph generated as in Claim \ref{clm:weight} will satisfy the assertion
of Theorem \ref{thm:main} with high probability. Therefore, from this point and {\em throughout the paper} we will
focus on the construction of a {\em weighted} graph $H$ satisfying the condition of Theorem \ref{thm:main}.
Hence, from now on, whenever we talk about $d(A,B)$ we will be referring to the weighted density between
$A,B$ as in (\ref{eq:weighted}).

\subsection{A preliminary construction}  \label{subsec:gowconstruct}

In this subsection we describe the first step in defining the graph $H$ of Theorem \ref{thm:main}.
This graph will be a variant of the graph used by Gowers in \cite{Gowers}. We start with the following definition.

\begin{definition}\label{def:balanced}{\bf(Balanced Partitions)}
Let $M$ be an integer and suppose we have a sequence $(A_i, B_i)_{i=1}^m$
of (not necessarily distinct) partitions of $[M]$. We call this sequence of partitions \emph{balanced}
if for any distinct $j, j' \in [M]$, the number of $1 \leq i \leq m$ for which $j$ and $j'$
lie in the same set of the partition $(A_i, B_i)$ is at most $3m/4$.
\end{definition}

The following claim appears in \cite{Gowers}. For completeness, we will reproduce a simple proof later on in the paper (see Section \ref{sec:trap}).

\begin{claim} \label{lem:balanced}
Let $\phi(m) =  2^{\lceil m/16\rceil}$. Then for every $m \geq 1$
there exists a sequence of $m$ balanced partitions of $\phi(m)$.
\end{claim}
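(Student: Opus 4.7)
The plan is to prove Claim~\ref{lem:balanced} by a straightforward probabilistic argument. Set $M = \phi(m) = 2^{\lceil m/16 \rceil}$. I will construct the sequence $(A_i,B_i)_{i=1}^{m}$ at random by, independently for each $i \in [m]$ and each $j \in [M]$, placing $j$ into $A_i$ with probability $1/2$ and into $B_i$ otherwise. Then it suffices to show that with positive probability, every pair $j \neq j' \in [M]$ is separated by at least $m/4$ of the partitions.

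Fix distinct $j, j' \in [M]$ and let $X_{j,j'}$ denote the number of indices $i \in [m]$ for which $j$ and $j'$ fall into the same part of $(A_i,B_i)$. For each $i$ this event occurs with probability exactly $1/2$, and these events are independent across $i$, so $X_{j,j'}$ is distributed as $\mathrm{Binomial}(m, 1/2)$ with mean $m/2$. I would apply Hoeffding's inequality with deviation $m/4$ to obtain
\[
\Pr\bigl[X_{j,j'} > 3m/4\bigr] \;\leq\; e^{-m/8}.
\]

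Now I union bound over all distinct pairs. The number of such pairs is at most $M^2/2 \leq 2^{2\lceil m/16\rceil - 1} \leq 2^{m/8+1}$, so the probability that \emph{some} pair violates the balanced condition is at most
\[
2^{m/8+1} \cdot e^{-m/8} \;=\; 2 \cdot (2/e)^{m/8},
\]
which is strictly less than $1$ once $m$ is at least a small absolute constant (since $2/e < 1$). Hence a balanced sequence exists for all such $m$. For the remaining small values of $m$, where $\lceil m/16 \rceil = 1$ so $M = 2$, the single non-trivial partition $(\{1\},\{2\})$ separates the only pair in $[M]$, so repeating it $m$ times gives a balanced sequence trivially.

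The main ``obstacle'' is really just bookkeeping: the exponent $1/16$ in the definition of $\phi$ is tuned so that the Hoeffding tail $e^{-m/8}$ dominates the pair-count $2^{m/8+O(1)}$, and one has to check that the resulting base $2/e$ is indeed less than $1$ (with some slack to absorb the $O(1)$ factor and the small-$m$ boundary). No structural difficulty arises beyond verifying this numerical inequality.
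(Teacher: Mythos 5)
Your proof is correct and is essentially the paper's own argument: a random assignment of each element to $A_i$ or $B_i$, a binomial tail bound of the form $e^{-\Theta(m)}$ for each fixed pair, and a union bound over the at most $M^2/2 \approx 2^{m/8}$ pairs, with small $m$ (where $M=2$) handled by repeating the trivial partition. The only bookkeeping point is at the boundary (e.g.\ $m=17,18$, where $M=4$ but your crude bound $2\cdot(2/e)^{m/8}$ is marginally $\geq 1$); using the exact count $\binom{M}{2}=6$ there closes this immediately.
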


Let $T^{\phi}(x)$ be the function satisfying $T^{\phi}(0)=1$ and $T^{\phi}(x)=T^{\phi}(x-1)\phi(T^{\phi}(x-1))$ for $x\geq 1$, where $\phi(x)=2^{\lceil x/16\rceil}$ is the function defined in Claim \ref{lem:balanced}.
It is not hard to see that $T^{\phi}$ is a Tower-type function, and that in fact $T^{\phi}(x) \geq T(\lfloor x/2 \rfloor)$ (see Section \ref{sec:trap}).

Let us define a sequence of integers as follows. We set
\begin{equation}\label{eqw0}
w(1)=\lfloor \log\log (1/\epsilon) \rfloor\;,
\end{equation}
and define inductively
\begin{equation}\label{eqw}
w(x+1)=\lfloor \log\log(T^{\phi}(w(x))) \rfloor\;.
\end{equation}
It is also not hard to see that $w(x)$ has a $W$-type dependence on $x$. Specifically we will later (see Section \ref{sec:trap}) observe that:

\begin{claim}\label{clm:W} For every integer $x \geq 1$, we have $w(x) \geq W(\lfloor x/2 \rfloor)$.
\end{claim}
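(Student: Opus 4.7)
The plan is to prove the bound by induction on $x$, combining two applications of the $w$-recurrence for each application of the $W$-recurrence. The key intermediate step will be a single-step estimate: for any $u \geq 4$,
$$\lfloor \log_2\log_2 T^{\phi}(u) \rfloor \;\geq\; T\bigl(\lfloor u/2\rfloor - 2\bigr).$$
This follows from the Tower-type lower bound $T^{\phi}(u) \geq T(\lfloor u/2\rfloor)$ recorded just before the claim, together with the identity $\log_2\log_2 T(k) = T(k-2)$ (valid for $k \geq 2$); the outer floor causes no loss since the right-hand side is already an integer. Substituting $u = w(x)$ in (\ref{eqw}) then gives $w(x+1) \geq T(\lfloor w(x)/2\rfloor - 2)$.

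Iterating this single-step estimate twice, I would obtain the double-step estimate
$$w(x+2) \;\geq\; T\Bigl(\bigl\lfloor T(\lfloor w(x)/2\rfloor - 2)/2\bigr\rfloor - 2\Bigr).$$
To make this at least $W(\lfloor x/2\rfloor + 1) = T(W(\lfloor x/2\rfloor))$, monotonicity of $T$ reduces the task to showing
$$\bigl\lfloor T(\lfloor w(x)/2\rfloor - 2)/2\bigr\rfloor - 2 \;\geq\; W(\lfloor x/2\rfloor).$$
Given the inductive hypothesis $w(x) \geq W(\lfloor x/2\rfloor)$, this is a routine check: once $w(x)$ exceeds some absolute constant (say $12$), the inner tower $T(\lfloor w(x)/2\rfloor - 2)$ is astronomically larger than $w(x)$ itself and swamps the halving, the floor, and the additive $-2$. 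A two-step induction then propagates $w(x) \geq W(\lfloor x/2\rfloor)$ to $w(x+2) \geq W(\lfloor x/2\rfloor + 1)$.

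For the base cases $x = 1, 2$ I would use (\ref{eqw0}) and (\ref{eqw}) directly: taking $c_0$ sufficiently small makes $w(1) = \lfloor \log\log(1/\epsilon)\rfloor$ as large as any prescribed constant, which in turn forces $w(2)$ and all subsequent $w(x)$ to be astronomically large. In particular one gets $w(1) \geq 1 = W(0)$ and $w(2) \geq 2 = W(1)$, and the threshold $w(x) \geq 12$ needed for the double-step estimate holds automatically for every $x \geq 1$. The hard part, such as it is, is purely bookkeeping — verifying that all the floor and $-2$ losses in the double-step estimate are absorbed by the growth of $T$ — and this becomes trivial once $w(1)$ is pushed above a modest constant by the choice of $c_0$.
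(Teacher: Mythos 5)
Your proposal is correct and follows essentially the same route as the paper: both rely on the Tower-type bound $T^{\phi}(u)\geq T(\lfloor u/2\rfloor)$ together with the identity $\log\log T(k)=T(k-2)$, and both run a two-step induction in which two applications of the recurrence (\ref{eqw}) dominate one application of $T$ in the definition of $W$ (the paper packages this as $\hat T(\hat T(t))>T(t)$ for large $t$, with $\hat T(t)=\lfloor\log\log T^{\phi}(t)\rfloor$). Your bookkeeping of the floors and the $-2$ losses is, if anything, slightly more explicit than the paper's.
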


We now turn to define a graph $G$, which we will later modify in order to get the actual graph $H$ that will satisfy the
assertion of Theorem \ref{thm:main}. In order to define $G$ we will first define a sequence of partitions of the vertex set
of $G$. For simplicity we will identify the $n$ vertices of $G$ with the integers $[n]$.
So let $n \in \mathbb N$ and set $s=w(\frac{1}{48}\sqrt{\log(1/\epsilon)})$, where $w(x)$ is the function
defined in (\ref{eqw}). We set $m_0 = 1$ and for $1 \leq r \leq s$, let $m_r = m_{r-1} \phi(m_{r-1})$.
For each $ 0 \leq r \leq s$,
let $X_1^{(r)}, X_2^{(r)}, \ldots, X_{m_r}^{(r)}$ be a partition of $[n]$ into $m_r$ intervals of integers of
equal size\footnote{We assume that $n$ is such that it can be divided into equal sized parts of size $m_r$ for all $0 \leq r \leq s$.}. We will
later refer to this partition as {\em canonical partition} $\mathcal P_r$.
Thus at level $r$, we have a canonical partition $\mathcal P_r$ consisting of $m_r$ clusters.
So ${\cal P}_0$ is just the entire vertex set of $G$.
Note that using the notation we introduced above we have
\begin{equation}\label{eq:parorder}
|{\cal P}_r|=m_r=T^{\phi}(r)\;.
\end{equation}
A crucial observation that will be used repeatedly in the paper is
that for every $r < r'$, partition ${\cal P}_{r'}$ refines partition ${\cal P}_r$.

We finally arrive at the actual definition of $G$. We will start with the graph $G$ where each pair of vertices $(x,y)$ has weight $0$.
We will then go over the partitions ${\cal P}_1,{\cal P}_2,\ldots,{\cal P}_{s}$ one after the other, and in each case increase
the weight between some of the pairs $(x,y)$.

Consider some $r \geq 1$ and focus on ${\cal P}_r$ and ${\cal P}_{r-1}$. Let us simplify the notation a bit and set $m=m_{r-1}$, $M=\phi(m)$ and $m_r = Mm$.
So $m$ is the number of clusters of ${\cal P}_{r-1}$, $M$ is the number of clusters of ${\cal P}_{r}$ inside each cluster of
${\cal P}_{r-1}$, and $mM$ is the number of clusters of ${\cal P}_{r}$.
Let us use $X_1,\ldots,X_m$ to denote the $m$ clusters of ${\cal P}_{r-1}$. Also, for each $1 \leq i \leq m$ we use
$X_{i,1},\ldots,X_{i,M}$ to denote the $M$ clusters of ${\cal P}_{r}$ inside $X_i$.
Now, for each $1 \leq i \leq m$, let $(A'_{i,j}, B'_{i,j})^m_{j=1}$ be a sequence of balanced partitions of $[M]$.
Such a collection exists since $M=\phi(m)$ so Claim \ref{lem:balanced} can be used here.
One can think of each of these partitions as partitioning the clusters of ${\cal P}_{r}$ within cluster $X_i$.
Let $A_{i,j} = \cup_{t \in A'_{i,j}} X_{i,t}$ and $B_{i,j} = \cup_{t \in B'_{i,j}} X_{i,t} = X_i
\backslash A_{i,j}$. We now update the weights of $G$ as follows: If $(x,y) \in X_i \times X_j$, then we increase $d_G(x,y)$ by $4^{-r}/4^{\sqrt{\log(1/\epsilon)}}$ if and only if
$(x,y) \in A_{i,j} \times A_{j,i}$ or $(x,y) \in B_{i,j} \times B_{j,i}$.
We will later refer several times to the following observation.

\begin{fact}\label{ob1} For any $x,y \in V(G)$ we have $d_G(x,y) \leq 4^{-\sqrt{\log(1/\epsilon)}}$.
\end{fact}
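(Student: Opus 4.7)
The plan is to bound $d_G(x,y)$ by summing the contributions from each of the $s$ levels of the construction, and observing that the sum is a geometric series with ratio $1/4$.

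First I would fix any pair $(x,y) \in V(G) \times V(G)$ and recall that $G$ starts with all weights equal to $0$, and that the construction visits each partition $\mathcal{P}_r$ for $r = 1, \ldots, s$ in turn. At level $r$, if $(x,y) \in X_i \times X_j$ for clusters $X_i, X_j$ of $\mathcal{P}_{r-1}$ (with $i \ne j$; the case $i=j$ is similar since then the pair lies in a single cluster and the increment rule is applied inside it), the weight $d_G(x,y)$ is increased by exactly $4^{-r}/4^{\sqrt{\log(1/\epsilon)}}$ or by $0$, depending on whether the balanced-partition condition at level $r$ is satisfied. In particular, the contribution added at level $r$ is at most $4^{-r}/4^{\sqrt{\log(1/\epsilon)}}$.

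Summing these upper bounds across all $s$ levels, I would then observe that
\[
d_G(x,y) \;\le\; \sum_{r=1}^{s} \frac{4^{-r}}{4^{\sqrt{\log(1/\epsilon)}}} \;\le\; \frac{1}{4^{\sqrt{\log(1/\epsilon)}}} \sum_{r=1}^{\infty} 4^{-r} \;=\; \frac{1}{3}\cdot 4^{-\sqrt{\log(1/\epsilon)}},
\]
which is strictly less than $4^{-\sqrt{\log(1/\epsilon)}}$, giving the stated bound. There is no real obstacle here; the fact just reflects the deliberate geometric decay of the per-level weight increments, which is built into the construction precisely so that after iterating all $s$ levels the total pairwise weight stays well below $1$ (and in fact below $4^{-\sqrt{\log(1/\epsilon)}}$). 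The only small care needed is to notice that at each level the contribution is either the full value or zero, so the naive bound by the geometric series is tight enough.
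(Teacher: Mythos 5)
Your proof is correct and is exactly the intended argument: at each level $r$ the increment to $d_G(x,y)$ is at most $4^{-r}/4^{\sqrt{\log(1/\epsilon)}}$, so summing the geometric series over all levels gives at most $\tfrac13\cdot 4^{-\sqrt{\log(1/\epsilon)}}$, which is below the claimed bound. The paper treats this fact as an immediate observation and gives no separate proof, so there is nothing to compare beyond noting that your argument is the one the authors clearly have in mind.
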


\subsection{Adding {\em Traps} to $G$} \label{subsec:trap}

We will now need to modify the graph $G$ defined above in order to obtain the graph $H$ from Theorem \ref{thm:main}.
To this end we will need to define certain quasi-random graphs. Let $b' < b$ and consider two of the canonical partitions ${\cal P}_{b'}$ and ${\cal P}_b$ defined in the previous subsection. Suppose ${\cal P}_{b}$ has order $m_b$ and let $V$ be a set of $m_b$ vertices, where we identify
vertex $i \in V$ with cluster $X_i \in {\cal P}_b$. Note that with this interpretation in mind, one can think of a cluster $U \in {\cal P}_{b'}$ as a subset
of vertices $U' \subseteq V$, where vertex $j$ belongs to $U'$ if and only if cluster $X_j \in {\cal P}_b$ is a subset of $U$.
It follows that for every $b' < b$, partition ${\cal P}_{b'}$ defines a natural partition of $V$ into $m_{b'}$ subsets $U^{b'}_1,\ldots,U^{b'}_{m_{b'}}$ corresponding to its $m_{b'}$ clusters.

We now arrive at a critical definition. We will use $e(R,R')$ to denote the number of edges in a graph with one vertex in $R$ and another in $R'$, where edges in $R \cap R'$ are counted twice\footnote{Note that this
definition is compatible with the definition of $e(A,B)$ we used earlier, where we assumed that the sets $A,B$ are disjoint.}.

\begin{definition}\label{def:trap}{\bf (Trap)} Let ${\cal P}_b$, $m_b$, $V$  and the partitions $U^{b'}_1,\ldots,U^{b'}_{m_{b'}}$ be as above. Let ${\cal O}=(V,E)$ be an $m_b$-vertex graph on $V$. Then ${\cal O}$ is said to be a {\em trap} if it satisfies the following two conditions:
\begin{itemize}
\item For every pair of sets $R,R' \subseteq V({\cal O})$ of size $\lceil \sqrt{m_b}/4 \rceil$ we have
$$
\left|e(R,R')-\frac12|R||R'|\right| \leq \frac14|R||R'|\;.
$$
\item For every $b' < b$, for every $1 \leq i, j \leq m_{b'}$, every choice of $200 \leq k \leq \log (m_b)$, every choice of $R \subseteq U^{b'}_i$ of size $k^6$ and every choice of $R' \subseteq U^{b'}_j$ of size $\lceil |U^{b'}_j|/k \rceil$, we have
$$
\left|e(R,R')-\frac12|R||R'|\right| \leq \frac{1}{k^2}|R||R'|\;.
$$
\end{itemize}
\end{definition}

\bigskip

We will later prove the following (see Section \ref{sec:trap}).

\begin{claim}\label{trapsexist} There is a constant $C$, such that for every $m > C$, there exists a trap on $m$ vertices.
\end{claim}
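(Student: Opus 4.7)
I would prove Claim~\ref{trapsexist} by the probabilistic method, taking $\mathcal{O}$ to be the Erd\H{o}s--R\'enyi random graph $G(m_b,1/2)$ on $V$ and showing that both conditions in Definition~\ref{def:trap} hold with probability at least $1/2$ for every $m_b$ larger than some absolute constant. For any pair $R,R'\subseteq V$ (possibly overlapping), $e(R,R')$ is a sum of $|R||R'|-|R\cap R'|$ independent $\mathrm{Bernoulli}(1/2)$ random variables with mean within $|R\cap R'|/2$ of $|R||R'|/2$, so Chernoff/Hoeffding yields
\[
\Pr\bigl[\bigl|e(R,R')-|R||R'|/2\bigr|>\alpha|R||R'|\bigr]\leq 2\exp\bigl(-\Omega(\alpha^2|R||R'|)\bigr).
\]
The plan is to apply this inequality to every candidate pair appearing in each of the two trap conditions and take a union bound.

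For condition~1 the argument is routine: with $\alpha=1/4$ and $|R|=|R'|=\lceil\sqrt{m_b}/4\rceil$, a single pair fails with probability $\exp(-\Omega(m_b))$, while the number of such pairs is at most $\binom{m_b}{\lceil\sqrt{m_b}/4\rceil}^2=\exp(O(\sqrt{m_b}\log m_b))$, so the union bound is $o(1)$. For condition~2 I would fix a quadruple $(b',k,i,j)$ with $u:=m_b/m_{b'}\geq k^6$; otherwise $|U_i^{b'}|<k^6$ and the condition is vacuous. Using $|R|=k^6$, $|R'|=\lceil u/k\rceil$ and $\alpha=1/k^2$, one has $|R||R'|\geq k^5u$, so Chernoff gives failure probability $\exp(-\Omega(ku))$ per pair, while $\binom{u}{k^6}\binom{u}{\lceil u/k\rceil}\leq\exp\bigl(k^6\log(eu/k^6)+(u/k)\log(ek)\bigr)$ bounds the number of pairs. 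A direct inspection (crucially using $k\geq 200$ and $u\geq k^6$) shows the entropy count is dominated by the concentration exponent $ku$, leaving per-quadruple failure probability at most $\exp(-\Omega(ku))$.

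The main obstacle is the outer union bound over $(b',k,i,j)$, since the factor $m_{b'}^2$ from the $(i,j)$ choices is potentially large. Here one exploits the tower-type recurrence $m_{b'+1}=m_{b'}\cdot 2^{\lceil m_{b'}/16\rceil}$: it forces $m_{b-1}\leq 16\log_2 m_b$, and more generally the $m_{b'}$'s shrink tower-like as $b'$ decreases, so $\sum_{b'<b}m_{b'}^2=O(m_{b-1}^2)=O(\log^2 m_b)$. At the same time, even at the largest valid level, $u_{b-1}=m_b/m_{b-1}\geq m_b/(16\log_2 m_b)$ is nearly $m_b$ itself, and for smaller $b'$ the ratio $u_{b'}$ is only larger. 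Thus $ku_{b'}\geq 200\cdot m_b/(16\log_2 m_b)=\Omega(m_b/\log m_b)$ easily dwarfs the total logarithmic contribution $2\log m_{b'}+\log s+\log\log m_b=O(\log\log m_b)$ from the outer union, so the sum over $b'$ and $k$ of per-quadruple failure probabilities stays $o(1)$ as $m_b\to\infty$. Combining the two conditions shows that $\mathcal{O}$ is a trap with probability at least $1/2$ for every $m=m_b>C$, proving the claim.
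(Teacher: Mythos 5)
Your proposal is correct and follows essentially the same route as the paper: take $\mathcal{O}=G(m_b,1/2)$, verify each trap condition via Chernoff plus a union bound, and use the tower-type recurrence $m_{b'+1}=m_{b'}2^{\lceil m_{b'}/16\rceil}$ to ensure that $m_{b'}=O(\log m_b)$ for all $b'<b$, so the outer union over $(b',i,j,k)$ is negligible against the concentration exponent $\Omega(k\,m_b/\log m_b)$. The paper organizes this as two separate claims (each condition holding with probability $3/4$) and handles the double-counting of edges in $R\cap R'$ by noting each indicator shifts $e(R,R')$ by at most $2$, but these are presentational differences only.
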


We are now ready to describe the modifications needed to turn $G$ into the graph $H$.
We do the following for every integer $1 \leq g \leq \frac{1}{48}\sqrt{\log(1/\epsilon)}$; let $b=w(g)$ be the integer defined
in (\ref{eqw}), let $m_b$ be the order of ${\cal P}_b$ and let ${\cal O}_b=(V,E)$ be\footnote{Note
that since we only ask Theorem \ref{thm:main} to hold for small enough $\epsilon$, we can assume that $\epsilon$ is small enough so that already $m_{w(1)}=T^{\phi}(w(1))$ would be larger than $C$, thus allowing us to pick a trap via Claim \ref{trapsexist} (where $w(1)$ is defined in (\ref{eqw0})).} a trap on a vertex set $V$ of size $m_b$. Recall that we identify vertex $i \in V$ with cluster $X_i \in {\cal P}_b$.
We now modify $G$ as follows; for every pair of clusters $(X_i,X_j)$, if $(i,j) \in E({\cal O}_b)$ we increase by $4^{-g}$ the weight of every pair of vertices $(x,y) \in X_i \times X_j$.
If $(i,j) \not \in E({\cal O}_b)$ we do not increase the weight of $(x,y)$. Let us state the following fact to which we will later refer.

\begin{fact}\label{trapdensity}
The smallest weight used when placing a trap in $H$ is $4^{-\frac{1}{48}\sqrt{\log(1/\epsilon)}}$.
\end{fact}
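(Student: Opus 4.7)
The plan is to simply unpack the trap-placement recipe given immediately above the statement of the fact. Recall that to pass from $G$ to $H$, we iterate over all integers $g$ with $1 \leq g \leq \frac{1}{48}\sqrt{\log(1/\epsilon)}$; for each such $g$ we set $b=w(g)$, fix a trap ${\cal O}_b$ on the vertex set indexed by the clusters of ${\cal P}_b$, and then, for every edge $(i,j) \in E({\cal O}_b)$, increase the weight of each pair $(x,y) \in X_i \times X_j$ by exactly $4^{-g}$ (pairs corresponding to non-edges of ${\cal O}_b$ receive nothing in this step). Hence every weight added during trap placement has the form $4^{-g}$ for some admissible integer $g$ in the range above.

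Since $g \mapsto 4^{-g}$ is strictly decreasing, the smallest of these weights is attained at the largest admissible value of $g$, namely $g = \lfloor \frac{1}{48}\sqrt{\log(1/\epsilon)} \rfloor$. Thus the smallest weight used while placing a trap equals $4^{-\lfloor \frac{1}{48}\sqrt{\log(1/\epsilon)}\rfloor}$, which is at least $4^{-\frac{1}{48}\sqrt{\log(1/\epsilon)}}$, exactly as the fact asserts. There is no real obstacle here: Fact \ref{trapdensity} is a bookkeeping statement, recorded now so that later sections can compare this smallest trap weight against the single-level weights $4^{-r}/4^{\sqrt{\log(1/\epsilon)}}$ added in the preliminary construction of Section \ref{subsec:gowconstruct} (notice that $4^{-\frac{1}{48}\sqrt{\log(1/\epsilon)}}$ is exponentially larger in $\sqrt{\log(1/\epsilon)}$ than the largest such single-level weight $4^{-1}/4^{\sqrt{\log(1/\epsilon)}}$, a disparity that will be crucial downstream).
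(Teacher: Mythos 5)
Your proposal is correct and is essentially the observation the paper itself relies on: the fact follows immediately from the trap-placement recipe, since the weights used are exactly $4^{-g}$ for $1 \leq g \leq \frac{1}{48}\sqrt{\log(1/\epsilon)}$ and the smallest is the one for the largest $g$ (your careful handling of the floor, giving a lower bound of $4^{-\frac{1}{48}\sqrt{\log(1/\epsilon)}}$, is exactly how the fact is used later, e.g.\ in (\ref{eqalpha}) and (\ref{eqk33})). Nothing further is needed.
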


Later on in the paper we will say that we have {\em placed} a trap on partition ${\cal P}_b$ if $b$ is one of the integers $w(1),\ldots,w(\frac{1}{48}\sqrt{\log(1/\epsilon)})$.
If a trap was placed on ${\cal P}_b$ and $(i,j)$ is an edge of the graph ${\cal O}_b$ that was used in the previous paragraph, then we will say that the pair $(X_i,X_j)$ {\em belongs} to the trap placed on ${\cal P}_b$.
Also, if $b=w(g)$, then we will refer to the trap placed on ${\cal P}_{b}$ as the $g^{th}$ trap placed in $H$.
Finally, if $(x,y) \in X_i \times X_j$ and $(X_i,X_j)$ belongs to the trap placed on ${\cal P}_{w(g)}$ then we will say that $(x,y)$ {\em received} an extra weight of $4^{-g}$ from the $g^{th}$ trap placed in $H$.

Using the above jargon, we can thus say that in order to obtain the graph $H$ from the graph $G$ we do
the following for every $1 \leq g \leq \frac{1}{48}\sqrt{\log(1/\epsilon)}$; setting $b=w(g)$, we place the $g^{th}$ trap on partition ${\cal P}_{b}$, by increasing the weight of $(x,y)$ by $4^{-g}$ if and only if
$(x,y) \in X_i \times X_j$ and $(X_i,X_j)$ belongs to the trap.

Let us draw some distinction between the way we assigned weights to edges in $G$ and the way we have done so when modifying $G$ to obtain $H$.
When defining $G$ we looked at {\em each} of the partitions ${\cal P}_r$, and for {\em every} $X_i,X_j \in {\cal P}_{r-1}$ added weight $4^{-r}/4^{\sqrt{\log(1/\epsilon)}}$ only
to {\em some} of the pairs $(x,y) \in X_i \times X_j$. More specifically, we considered the partitions of $X_i=A_{i,j} \cup B_{i,j}$ and $X_j=A_{j,i} \cup B_{j,i}$
and only added the weight $4^{-r}/4^{\sqrt{\log(1/\epsilon)}}$ when either $(x,y) \in A_{i,j} \times A_{j,i}$ or $(x,y) \in B_{i,j} \times B_{j,i}$. When adding the traps, we have only added weights
to {\em some} of the partitions ${\cal P}_b$, that is, those for which $b=w(g)$ for some $1 \leq g \leq \frac{1}{48}\sqrt{\log(1/\epsilon)}$. Moreover, when placing a trap on ${\cal P}_b$
we added weight $4^{-g}$ only to pairs $(x,y)$ connecting {\em some} of the pairs $(X_i,X_j)$ (those that belong to the trap). Finally, for each such pair $(X_i,X_j)$ we either added more weights to all the pairs $(x,y) \in X_i \times X_j$ or to none of them.

Another important distinction is the following; suppose $b=w(g)$. Then in $G$, the weight that was added to ${\cal P}_b$ was $4^{-b}/4^{\sqrt{\log(1/\epsilon)}}$ while the weight we added when placing a trap on ${\cal P}_b$ is $4^{-g}$. Since $w$ is a $W$-type function we see that the weights assigned in $G$ to a specific partition ${\cal P}_b$ are extremely small compared to those assigned to ${\cal P}_b$ when placing a trap on it (assuming a trap was placed on ${\cal P}_b$).

We also observe that for every pair of vertices $(x,y)$ of $H$, the total weight it can receive from all the traps we placed is bounded
by $1/4+1/16+\ldots < 1/3$. We also recall Fact \ref{ob1} stating that the total weight assigned to a pair $(x,y)$ in $G$ is bounded by $1/4^{\sqrt{\log(1/\epsilon)}}$.
This means that $d_H(x,y) \leq 1$, as needed for the application of Claim \ref{clm:weight}.


\section{Proof Overview, Key Lemmas and Proof of Theorem \ref{thm:main}}\label{sec:main}

Our goal in this section is fourfold; give an overview of the proof of Theorem \ref{thm:main}, describe the
main intuition behind the construction of $H$, state the two key lemmas that will be used
to prove Theorem \ref{thm:main} and finally derive Theorem \ref{thm:main} from these two lemmas.

Perhaps the best way to approach our construction of $H$ is to first consider the proof of Theorem \ref{thm:afks} in \cite{afks}.
For simplicity, let us consider the case $f(x)=1/x$; we start by taking ${\cal A}_1$ to be an arbitrary partition of $G$ of order $1/\epsilon$, and then apply Theorem
\ref{thm:sz} in order to find a $1/|{\cal A}_1|$-regular partition, ${\cal B}_1$, of $G$ which refines ${\cal A}_1$. Note that by definition, ${\cal A}_1$ and ${\cal B}_1$ satisfy
the first condition of Definition \ref{def:reg}, so if they also satisfy the second, then we are done. If they do not, then we set ${\cal A}_2$ to be ${\cal B}_1$ and use Theorem \ref{thm:sz} to find
a $1/|{\cal A}_2|$-regular partition, ${\cal B}_2$, of $G$ which refines ${\cal A}_2$. Note that ${\cal A}_2$ and ${\cal B}_2$ satisfy
the first property, so if they satisfy the second we are done. The process thus goes on till we end up with a pair of partitions ${\cal A}_i$, ${\cal B}_i$ which satisfy the second condition.
The main argument in \cite{afks} shows that this process must stop after (about) $1/\epsilon$ steps with a pair ${\cal A}_i$, ${\cal B}_i$ which satisfies the second condition, and also
(by definition) the first condition. Since the above proof applies Theorem \ref{thm:sz} repeatedly, where each time we take $1/\gamma$ to be the order of the previous partition, the bound
we obtain is of $W$-type.

Of course, if we want to have any chance of proving Theorem \ref{thm:main}, we need to come up with a graph for which the {\em proof} of Theorem \ref{thm:afks} will
produce a partition of $W$-size. Given the overview of this proof described above, the graph $H$ needs to have two properties: (1) For every $\gamma >0$, any $\gamma$-regular
partition of $H$ has size given by a Tower-type function; (2) one needs to iteratively apply Theorem \ref{thm:sz} a super-constant\footnote{To be precise,
in order to get a $W$-type lower bound the number of iterations needs to be larger than $W^{-1}(1/\epsilon)$.} number of times in order
to get two partitions ${\cal A}$ and ${\cal B}$ satisfying the second condition of Definition \ref{def:reg}. The first property will guarantee that each time we apply Theorem
\ref{thm:sz} we get a Tower-type increase in the size of ${\cal A}_i$ while the second condition will guarantee that we will have to repeat this sufficiently many times.

Let us describe how to get a graph satisfying property (1) mentioned above. Recall that Gowers showed \cite{Gowers} that for
every $\gamma$ there exists a graph with the property that any $\gamma$-regular partition has a size $T(1/\gamma^{1/16})$. It is not hard to see that by a minor ``tweak'' of his
construction\footnote{In fact, we will be tweaking the construction of Gowers \cite{Gowers} which gives a slightly weaker lower bound of $T(\log(1/\gamma))$, and is much simpler to analyze. Since we are trying to prove $W$-type lower bounds it makes little difference if we are iterating the function $T(x)$ or $T(\log(x))$.} one can get a {\em single} graph that works for all $\gamma$ bounded away from 0. This is basically\footnote{If we were only interested in getting a graph
that for all $\gamma >0$ had only $\gamma$-regular partitions of Tower-size, then we could have used the weights $4^{-r}$ instead of $4^{-r}/4^{\sqrt{\log(1/\epsilon)}}$ like we do.} the graph $G$ we defined in Subsection \ref{subsec:gowconstruct}. For completeness let us describe the intuition behind Gowers' construction. Let us explain why the partitions ${\cal P}_{r}$ used in the construction of $G$ cannot be used as $\gamma$-regular partitions of $G$. Recall that at each iteration, we take every pair of sets $X_i,X_j \in {\cal P}_{r-1}$ split them as $X_i=A_{i,j} \cup B_{i,j}$ and $X_j = A_{j,i} \cup B_{j,i}$ and increase the weight between $A_{i,j},A_{j,i}$ and $B_{i,j},B_{j,i}$.
So, in some sense, each partition ${\cal P}_r$ is used in order to rule out the possibility of using the previous partition ${\cal P}_{r-1}$ as a $\gamma$-regular partition. We note that when one comes about to actually prove
that no other (small) partition can be $\gamma$-regular one relies critically on the fact that the weights assigned to the partitions ${\cal P}_r$ in $G$ decrease exponentially (as a function of $r$). This makes sure that any irregularity found in level $r$ cannot be canceled by weights
assigned to levels $r' > r$.

Let us describe how to get a graph satisfying property (2) mentioned above. Recall that $G$ was defined over a sequence of partitions ${\cal P}_r$.
Suppose we want to make sure that two specific partitions in this sequence ${\cal P}_r$ and ${\cal P}_{r'}$, with ${\cal P}_{r'}$ refining ${\cal P}_r$, will not satisfy the second property of Definition \ref{def:reg}.
Then we can do the following; we take a random graph ${\cal O}$ whose vertices are the clusters of ${\cal P}_{r'}$, and for every edge $(i',j') \in E({\cal O})$ increase the weight of all pairs $(x,y) \in U_{i'} \times U_{j'}$, where
$U_{i'},U_{j'} \in {\cal P}_{r'}$. This is just the trap we used in Subsection \ref{subsec:trap}.
Since we use a random graph, we expect {\em all} pairs of clusters $(X_i,X_j)$ of ${\cal P}_{r}$ to not be good (in the sense of Definition \ref{def:reg}) since close to half of the clusters $(U_{i'},U_{j'})$ with $U_{i'} \subseteq X_i,U_{j'} \subseteq X_j$, will get an extra
weight while the other half will not. Now it is not hard to see that for this to work we do not actually have to put the trap on ${\cal P}_{r'}$; it is enough to do that on some partition ${\cal P}_b$ with $r \leq b \leq r'$. Since we will make sure that a $\gamma$-regular partition must be huge, in order to satisfy the first condition of Definition \ref{def:reg} one would
have to pick two partitions ${\cal P}_{r'}$, ${\cal P}_r$ with $r'$ being much larger than $r$. Therefore, in order to make sure
that all pairs ${\cal P}_{r'}$, ${\cal P}_r$ will fail the second condition, it is enough to place the traps only on very few partitions ${\cal P}_b$, where
by few we mean that there will be a Tower-type jump between their indices.

So with one serious caveat, if one wants to construct an $(\epsilon,f)$-regular partition by taking ${\cal A}$ and ${\cal B}$ to be two of the canonical partitions ${\cal P}_r$,${\cal P}_{r'}$, then one is forced to take
two partitions that refine the last trap we have placed in $H$. The reason is that by property (1) the integers $r$ and $r'$ must be very far apart, and the way we have placed the traps will guarantee that there will be a trap in between them which will then make sure that they do not satisfy the second property of Definition \ref{def:reg}. The caveat we are referring to is the fact that once we have
added the traps to $G$, we have destroyed the critical feature of the graph $G$, which is that the weights decrease exponentially (recall the observation we made above and the discussion at the end of Subsection \ref{subsec:trap}). Hence, it is no longer true that once we find a discrepancy in some
partition ${\cal P}_r$, this discrepancy cannot be canceled by lower levels. In terms of analyzing Gowers' example, it might be the case that some pairs which were not $\gamma$-regular in $G$, might become
$\gamma$-regular in $H$. Actually, there {\em will} be such pairs. This might completely ruin our ability to prove the $H$ has only $\gamma$-regular partitions of Tower-size.

We overcome the above problem by proving that it cannot happen {\em very often}. Namely, since the trap we have added originates from a random graph, then at least on average we expect it to contribute the same
density to all pairs of vertex sets. So {\em on average}, we do not expect a trap to cancel a discrepancy caused by partitions that are refined by it. This is of course only true on average. To turn this
into a deterministic statement, we formulate a condition that holds in random graphs, and show that if too many pairs that were supposed to be not $\gamma$-regular somehow turn out to be $\gamma$-regular, then
we get a violation of the property we assume the trap to satisfy. Turning this intuition into formality is probably the most challenging part of this paper.
One of the main reasons is that we cannot run this argument over all the pairs; instead we need to somehow ``pack'' them together and then argue
about each of these packaged pairs. See Lemmas \ref{clm:oneset} and \ref{clm:key}.

We now turn to the key lemmas of the paper. To state them we will need to define the notion of \emph{$\beta$-refinement}. We briefly mention that this notion is crucial in overcoming
another assumption we have used in the above discussion, that one is trying to construct an $(\epsilon,f)$-regular partition by using only the canonical partitions ${\cal P}_r$.
Using the notion of $\beta$-refinement we will show that one actually has to {\em approximately} use only such partitions.

Let $0 \leq \beta <1/2$. Given two sets $Z$ and $X$,
we write $Z \subset_{\beta} X$, to denote the fact that $|Z \cap X| \geq (1- \beta) |Z|$.
We will sometimes also say that $X$ $\beta$-contains $Z$ or that $Z$ is $\beta$-contained in $X$ to refer to the fact that $Z \subset_{\beta} X$.
Note that since we assume that $\beta < 1/2$, there can be at most one set $X$ which $\beta$-contains a set $Z$.
Given two partitions ${\cal P}=\{X_1, \ldots, X_m\}$
and ${\cal Z}=\{Z_1, \ldots, Z_k\}$ of $V(H)$ and $0 \leq \beta < 1/2$, we shall say that ${\cal Z}$ is a $\beta$-refinement of ${\cal P}$
if for at least $(1-\beta)k$ values of $t$, there exists $i$ such that $Z_t \subset_{\beta} X_i$.
Observe that if $\beta = 0$, then $\beta$-refinement coincides with the standard notion
of one partition refining another one, that we discussed earlier.


In what follows, when we refer to the graph $H$ we mean the graph $H$ defined in the previous section.
We now state the two key lemmas we will prove later on in the paper. Getting back to the intuitive discussion above, one can think of the first lemma as formalizing condition (1) mentioned above, which we wanted $H$ to satisfy.

\medskip

\begin{lemma}\label{lem:gowtrap} Let $f(x)=1/x$. Suppose ${\cal A}$ and ${\cal B}$ form an $(\epsilon,f)$-regular partition of $H$.
If $|{\cal A}|=k \geq 1/\epsilon$ then ${\cal B}$ is an $\epsilon^{1/5}$-refinement of ${\cal P}_{2\log\log k}$.
\end{lemma}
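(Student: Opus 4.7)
The plan is to prove the lemma by induction on $r = 0, 1, \ldots, 2\log\log k$, establishing at each level that $\mathcal{B}$ is a $\beta_r$-refinement of $\mathcal{P}_r$ for a slowly increasing sequence $\beta_r$ with $\beta_0 = 0$ and $\beta_{2\log\log k} \leq \epsilon^{1/5}$. The base case is immediate since $\mathcal{P}_0$ is the trivial partition. For the inductive step from $r-1$ to $r$, I would assume for contradiction that strictly more than a $\beta_r$-fraction of the clusters of $\mathcal{B}$ fail to be $\beta_r$-contained in any cluster of $\mathcal{P}_r$, and derive a violation of the $1/k$-regularity of $\mathcal{B}$. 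By the inductive hypothesis, each such ``bad'' cluster $Z$ is $\beta_{r-1}$-contained in some $X_i \in \mathcal{P}_{r-1}$ but must split non-trivially across two of its sub-clusters $X_{i,s}, X_{i,t} \in \mathcal{P}_r$; I fix such indices $(s,t)$ for each bad $Z$.

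The main mechanism is Gowers-style. By the balanced partition property of Claim \ref{lem:balanced}, at least a quarter of the indices $j \in [m_{r-1}]$ give a partition $(A'_{i,j}, B'_{i,j})$ that separates $s$ from $t$. For any such $j$, and for any cluster $Z' \in \mathcal{B}$ that is $\beta_{r-1}$-contained in $X_j$, the level-$r$ Gowers weights contribute asymmetrically to $d(Z \cap X_{i,s}, Z')$ versus $d(Z \cap X_{i,t}, Z')$, producing a density differential of magnitude $\sim 4^{-r}/4^{\sqrt{\log(1/\epsilon)}}$. Since $r \leq 2\log\log k$ and $k \geq 1/\epsilon$, this differential is polynomially larger than $1/k$, so it yields the desired violation of $1/k$-regularity of the pair $(Z,Z')$, provided the differential is not cancelled by contributions from other weights.

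The main obstacle, and the most involved technical step, is controlling those other contributions. Gowers weights at levels $r' < r$ are constant on pairs of $\mathcal{P}_{r-1}$-sub-clusters and therefore cancel exactly in the differential; Gowers weights at levels $r' > r$ sum geometrically to less than $(4/3) \cdot 4^{-r-1}/4^{\sqrt{\log(1/\epsilon)}}$, too small to annihilate the level-$r$ differential. The real difficulty is the traps: a trap at a level $w(g) \leq r$ contributes a per-edge weight $4^{-g} \gg 4^{-r}/4^{\sqrt{\log(1/\epsilon)}}$, and a trap at $w(g) > r$ varies across sub-clusters of $\mathcal{P}_r$. To handle these I would invoke the quasi-randomness conditions of Definition \ref{def:trap} via the preliminary Lemmas \ref{clm:oneset} and \ref{clm:key}, packaging the bad $Z$'s sitting inside each $X_i$ into a single large super-set (and doing the same for the $Z'$'s inside each $X_j$). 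On these packaged sets, which are large enough to be admissible $R, R'$ in Definition \ref{def:trap}, each trap contributes essentially the same density to both sides of the $A_{i,j}/B_{i,j}$ split, with error far smaller than the surviving Gowers differential.

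Once the differential survives cancellation, I would finish by counting: each bad $Z$ together with a positive fraction of compatible $Z'$ yields a pair that is not $1/k$-regular, since $Z \cap X_{i,s}$ and $Z \cap X_{i,t}$ are each large enough (after the packaging) to serve as admissible witnesses to irregularity. Aggregating over all bad $Z$ produces more than $(1/k)\cdot|\mathcal{B}|^2$ irregular pairs, contradicting the $1/k$-regularity of $\mathcal{B}$. Choosing $\beta_r$ to grow additively by $\epsilon^{1/5}/(4\log\log k)$ per step forces $\beta_{2\log\log k} \leq \epsilon^{1/5}$, as required. The subtlest point to make fully rigorous is the packaging step, where all trap levels $w(g) \leq r$ must be neutralized simultaneously and the traps with $w(g) > r$ must also be absorbed, all through the two conditions of Definition \ref{def:trap}; this is precisely the burden carried by Lemmas \ref{clm:oneset} and \ref{clm:key} in the subsequent sections.
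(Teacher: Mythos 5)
Your plan is essentially the paper's proof: the paper proves a single-step statement (Lemma \ref{lem:maintrap}: a $\gamma$-regular $\beta$-refinement of ${\cal P}_{r-1}$ is an $8\beta$-refinement of ${\cal P}_r$) and iterates it $2\log\log k$ times starting from $\beta=k^{-1/4}$, with the step carried out exactly as you describe — contradiction with $1/k$-regularity via the Gowers mechanism, and neutralization of the traps via the quasi-randomness conditions packaged through Lemmas \ref{clm:oneset} and \ref{clm:key}. (Whether $\beta_r$ grows multiplicatively by $8$ or additively by $\epsilon^{1/5}/4\log\log k$ is immaterial.)

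One detail in your sketch would fail as written: you fix a single pair of sub-clusters $X_{i,s},X_{i,t}$ across which a bad $Z$ splits, and use $Z\cap X_{i,s}$ and $Z\cap X_{i,t}$ as the witnesses of irregularity. If $Z$ fails to be $\beta$-contained in any sub-cluster because its mass is spread thinly over many of the $M=\phi(m_{r-1})$ sub-clusters of $X_i$, then every individual $Z\cap X_{i,s}$ can have size as small as $|Z|/M\ll\gamma|Z|$, too small to be an admissible witness, and balancedness of a single pair $(s,t)$ buys nothing. The paper avoids this by applying Gowers' vector lemma (Lemma \ref{lem:balvector}) to the whole distribution vector $\lambda_{i'}=|Z\cap X_{i,i'}|/|Z|$, which yields at least $6\beta m$ indices $j$ for which \emph{both aggregated sides} $Z\cap A_{i,j}$ and $Z\cap B_{i,j}$ have mass at least $\beta^2|Z|$; these aggregated sets (suitably pruned to $A'$, $B'$ in the proof of Lemma \ref{clm:key}) are the actual irregularity witnesses. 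So the witnesses must be taken over the two sides of the balanced partition, not over two individual sub-clusters; with that correction your outline matches the paper's argument.
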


Note that if $\beta <1/2$ and partition ${\cal A}$ is a $\beta$-refinement of ${\cal P}_r$ then the order of ${\cal A}$ is at least
half
the order of ${\cal P}_r$.
Hence the above lemma (implicitly) says  that partition ${\cal B}$, which must be $1/k$-regular, must have order at least half times the order of ${\cal P}_{2\log\log k}$. Recalling (\ref{eq:parorder}), this means
that $|{\cal B}|\geq (1/2) \cdot T^{\phi}(2 \log\log k)$. We note however, that knowing that ${\cal B}$ must have Tower size is not enough for our proof to work. We actually need to know that
${\cal B}$ is a good refinement of partition ${\cal P}_{2\log\log k}$. This is needed in order to show that if a trap was placed between ${\cal A}$ and ${\cal B}$ then they will indeed fail
to satisfy the second property of Definition \ref{def:reg}. This is exactly where the notion of $\beta$-refinement becomes useful, as we state in the second key lemma, which formalizes property (2) mentioned above that we wanted $H$ to satisfy.

\begin{lemma}\label{lem:trapworks}
Suppose ${\cal A}$, ${\cal B}$ are two partitions of $H$ with the following properties
\begin{itemize}
\item ${\cal B}$ is a refinement of ${\cal A}$.
\item $|{\cal A}|=k$ and $H$ has a trap on a canonical partition ${\cal P}_{b}$ whose order is at least $k^2$.
\item ${\cal B}$ is an $\epsilon^{1/5}$-refinement of ${\cal P}_b$.
\end{itemize}
Then ${\cal A}$ and ${\cal B}$ do not satisfy the second condition of Definition \ref{def:reg}. In particular
they do not form an $(\epsilon,f)$-regular partition of $H$.
\end{lemma}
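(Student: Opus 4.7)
The plan is to exhibit, for almost all pairs $(V_i, V_j) \in {\cal A} \times {\cal A}$, a large fraction of cluster-pairs $(U_{i,i'}, U_{j,j'})$ whose $H$-density deviates from $d_H(V_i, V_j)$ by more than $\epsilon$, using the trap placed on ${\cal P}_b$ as the dominant source of irregularity. Decompose $d_H = \tau + \rho$, where $\tau$ is the contribution of the $g$-th trap (with $b=w(g)$) and $\rho$ collects everything else, namely the weights assigned when constructing $G$ plus all traps at levels $\neq b$. First I would use the $\epsilon^{1/5}$-refinement hypothesis to project onto ${\cal P}_b$: for each $U \in {\cal B}$ that is $\epsilon^{1/5}$-contained in a (necessarily unique) ${\cal P}_b$-cluster, let $\pi(U)$ be that cluster. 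A standard averaging argument shows all but $O(\epsilon^{1/10})k$ of the $V_i$ are \emph{nice}, meaning almost all of their ${\cal B}$-subclusters have a well-defined $\pi(\cdot)$. For a nice $V_i$, set $T_i \subseteq V({\cal O}_b)$ to be the image of its good ${\cal B}$-subclusters; a pigeonhole count using $|V_i|=n/k$, cluster size $n/m_b$, and $m_b \geq k^2$ yields $|T_i| \geq m_b/(2k) \geq \sqrt{m_b}/2$, well above the $\sqrt{m_b}/4$ threshold of the first trap property.

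Because $|T_i|, |T_j| \geq \sqrt{m_b}/4$, the first property of ${\cal O}_b$ forces $e(T_i, T_j)/(|T_i||T_j|) \in [1/4, 3/4]$. Since $V_i$ (resp.\ $V_j$) is nearly uniformly distributed across $T_i$ (resp.\ $T_j$), this translates to $\tau(V_i, V_j) \in 4^{-g}\cdot[1/4, 3/4]$. On the other hand, for a good cluster-pair $(U, U')$ with $\pi(U)=X$, $\pi(U')=Y$ we have $\tau(U, U') = 4^{-g} \mathbf{1}[(X,Y) \in E({\cal O}_b)] \pm O(4^{-g}\epsilon^{1/5})$, so independently of whether $(X,Y)$ is an edge, $|\tau(U, U') - \tau(V_i, V_j)| \geq 4^{-g}/4$ for at least half of the good cluster-pairs. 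It remains to argue that the $\rho$-term cannot cancel this signal. Fact \ref{ob1} gives $|\rho_G(U, U') - \rho_G(V_i, V_j)| \leq 2\cdot 4^{-\sqrt{\log(1/\epsilon)}}$, which is negligible compared with $4^{-g}$ since $g \leq \tfrac{1}{48}\sqrt{\log(1/\epsilon)}$; the total weight from finer traps (levels $w(g')$ with $g'>g$) is bounded by $4^{-g}/3$, and I would argue their pairwise deviation is small for most $(i', j')$ by the same quasi-randomness plus second-moment style reasoning, applied level by level.

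The main obstacle is the contribution of the \emph{coarser} traps ${\cal O}_{w(g')}$, $g'<g$, whose individual weights $4^{-g'}$ can exceed $4^{-g}$. The plan is to bucket the pairs $(i', j')$ by the tuple $(\pi_{g'}(U_{i,i'}), \pi_{g'}(U_{j,j'}))_{g'<g}$, where $\pi_{g'}$ is the (well-defined) image in ${\cal P}_{w(g')}$; this makes sense because ${\cal P}_b$ refines every coarser ${\cal P}_{w(g')}$, so each $\epsilon^{1/5}$-containment passes down. Within a single coarse bucket every coarser-trap contribution is \emph{constant}, so it drops out of the within-bucket difference $d_H(U, U') - (\text{bucket mean})$; only the $g$-th trap, the finer traps, and the $G$-weights remain. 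The second property of ${\cal O}_b$ is tailored precisely for this: inside each ${\cal P}_{w(g')}$-cluster the trap ${\cal O}_b$ is quasi-random on all sufficiently large subsets, so the bimodal trap-$g$ signal persists within each bucket as long as the restricted sets meet the threshold size ($\sim k^6$). Averaging the within-bucket deviation against the bucket mean (which itself deviates from $d_H(V_i, V_j)$ by exactly what the coarser traps dictate) converts this into an unconditional lower bound that at least a constant fraction of pairs $(i', j')$ have $|d_H(U,U') - d_H(V_i, V_j)| \geq 4^{-g}/8 \gg \epsilon$.

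This makes $(V_i, V_j)$ fail the good-pair criterion of Definition \ref{def:reg} for every nice pair; since at least $(1-O(\epsilon^{1/10}))\binom{k}{2}$ pairs are nice, strictly more than $\epsilon\binom{k}{2}$ pairs fail Condition 2, yielding the lemma. The hardest part I anticipate is the bookkeeping in the bucketing step: verifying inside every coarse bucket that both restricted sets $T_i \cap U^{b'}_\cdot$, $T_j \cap U^{b'}_\cdot$ still meet the threshold size needed by the second trap property, and that the cumulative $\epsilon^{1/5}$-error from imperfect containments together with the $\leq 4^{-g}/3$ slippage from the finer traps stays strictly below $4^{-g}/4$, so that the trap-$g$ signal genuinely survives.
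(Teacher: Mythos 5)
Your opening moves track the paper's: the averaging argument showing almost all $V_i$ are ``useful'', the projection onto ${\cal P}_b$, and the use of the first trap property to pin the trap-$g$ contribution to $(V_i,V_j)$ inside $4^{-g}\cdot[1/4,3/4]$ (this is the content of the paper's Claim \ref{clm:middle}). One local gap already appears here: you assert that $V_i$ is ``nearly uniformly distributed across $T_i$'', but nothing forces this. The paper handles arbitrary distributions of $V_i$ over the clusters of ${\cal P}_b$ by writing the fractional weight vector as a convex combination of uniform indicators of sets of size $\lceil\sqrt{m_b}/4\rceil$ (Lemma \ref{convcomb} together with Lemma \ref{lem:random}); working with the unweighted image set $T_i$ alone does not give you the density statement you need. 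This is fixable, but it is a step you must supply.

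The serious gap is in your treatment of the coarser traps. You bucket the sub-pairs by their images in the coarser partitions and then invoke the \emph{second} trap property to keep the trap-$g$ signal bimodal within each bucket. But the second condition of Definition \ref{def:trap} only controls $e(R,R')$ when $R$ has the specific small size $k^6$ and $R'$ occupies a $1/k$-fraction of an entire coarse cluster; it says nothing when the restriction of $V_i$ to a coarse cluster is concentrated on one or a few ${\cal P}_b$-clusters, which is entirely possible. (In the paper this property is used only in the proof of Lemma \ref{lem:gowtrap}, and there the set $A'$ is explicitly \emph{constructed} by an iterative process precisely to guarantee the required spread; no such construction is available for the restrictions of $V_i,V_j$ to your buckets.) So within a bucket the edge/non-edge split can be $100$--$0$, and ``averaging the within-bucket deviation against the bucket mean'' is not an argument that recovers a deviation from $d_H(V_i,V_j)$ in that case. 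The paper sidesteps all of this: since $H$ contains only $f\le\frac{1}{48}\sqrt{\log(1/\epsilon)}$ traps, a given sub-pair can be hit by at most $2^{f}\ll 1/(32\epsilon)$ possible subsets of the coarser traps, so pigeonholing inside $D_1$ and $D_2$ produces families $D_1',D_2'$, each of size at least $\epsilon\ell^2$, with \emph{identical} coarser-trap exposure within each family; one then compares the densities of $D_1'$ against $D_2'$ directly, via the largest weight on which the two exposure sets disagree. This certifies that $(V_i,V_j)$ is ``bad'' (two large families of sub-pairs whose densities differ by $2\epsilon$), which already contradicts goodness, and it uses only the first trap property. Finally, note that $4^{-g}/3>4^{-g}/4$, so the error budget in your last paragraph does not close even if the bucketing step were granted.
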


\medskip

We end this section with the derivation of Theorem \ref{thm:main} from Lemma \ref{lem:gowtrap} and Lemma \ref{lem:trapworks}.

\begin{proof}[Proof of Theorem \ref{thm:main}] Suppose ${\cal A}$ and ${\cal B}$ form an $(\epsilon,f)$-regular partition of $H$,
where $|{\cal A}|=k\geq 1/\epsilon$. Let $m_s$ denote the order of ${\cal P}_s$, which is the largest partition on which we have placed a trap.
Recall that $s=w(\frac{1}{48}\sqrt{\log(1/\epsilon)})$ and that $m_s \geq s$ (in fact, $m_s=T^{\phi}(s)$). Hence, by Claim \ref{clm:W} we have
$m_s \geq W(\frac{1}{96}\sqrt{\log(1/\epsilon)})$. Therefore, if $k \geq \sqrt{m_s}$ we are done since
$\sqrt{W(\frac{1}{96}\sqrt{\log(1/\epsilon)})} > W(\frac{1}{100}\sqrt{\log(1/\epsilon)})$ (with a lot of room to spare).

We can thus assume that $|{\cal A}|=k \leq \sqrt{m_s}$, and choose $b$ to be the {\em smallest} index of a partition ${\cal P}_{b}$, on which we have placed a trap
satisfying $|{\cal P}_b| \geq k^2$. If we could show that ${\cal B}$ forms an $\epsilon^{1/5}$-refinement of ${\cal P}_{b}$, then an application
of Lemma \ref{lem:trapworks} would give that ${\cal A}$ and ${\cal B}$ do not form an $(\epsilon,f)$-regular partition of $H$, which would be a
contradiction. Now, Lemma \ref{lem:gowtrap} tells us that ${\cal B}$ is an $\epsilon^{1/5}$-refinement of ${\cal P}_{2\log\log k}$.
Note that if ${\cal B}$ is an $\epsilon^{1/5}$-refinement on ${\cal P}_{2\log\log k}$ then it is also an $\epsilon^{1/5}$-refinement of any partition
that is refined by ${\cal P}_{2\log\log k}$. In other words, it is enough\footnote{Recall that each partition
${\cal P}_r$ is a refinement of all the partitions ${\cal P}_{r'}$ with $r' \leq r$.} that we show that $b \leq 2\log\log(k)$.

Suppose first that $b=w(1)$, that is, the first trap of size at least $k^2$ is the first trap placed in $H$. Then recalling (\ref{eqw0}) and the fact that
$k \geq 1/\epsilon$, we have
$$
b=w(1)=\lfloor \log\log(1/\epsilon)\rfloor \leq 2\log\log(k)\;,
$$
as needed. Suppose now that $b=w(g+1)$ for some $g \geq 1$ and that the trap with largest order smaller than $k^2$ was placed on ${\cal P}_{b'}$ where $b'=w(g)$. Then recalling (\ref{eqw}) we
see that $b=\lfloor \log\log (T^{\phi}(b'))\rfloor$. We also recall (\ref{eq:parorder}) stating that $|{\cal P}_{b'}|=T^{\phi}(b')$. We thus infer that
$$
T^{\phi}(b')= |{\cal P}_{b'}| \leq k^2\;,
$$
implying that
$$
b= \lfloor \log\log(T^{\phi}(b'))\rfloor \leq \log\log(k^2) \leq 2\log\log(k)\;,
$$
thus completing the proof. \end{proof}

As one can see from our proof of Theorem \ref{thm:main},
what we show is not only that an $(\epsilon,f)$-regular partition must be large, but that the only way to get such a partition
is to basically take ${\cal A}$ and ${\cal B}$ to be refinements of partition ${\cal P}_s$ in $H$.
Recall that we started this section by saying that one should design $H$ in a way that will make sure that
at least the proof of Theorem \ref{thm:afks} will produce a large partition. The fact that the only way to get an
$(\epsilon,f)$-regular partition is to take partition ${\cal P}_s$, can be interpreted as saying that the only way
to {\em prove} Theorem \ref{thm:afks} is to go through the process described at the beginning of this section.

\section{Some Preliminary Lemmas}\label{sec:trap}

In this section we prove some simple lemmas that will be used later on in the paper. But we start with proving the claims that were stated without proof in the previous sections. From this point on, when we write something like $x \leq_{(\ref{eqw0})} y$, we mean that the fact that $x \leq y$ follows from the facts stated in equation (\ref{eqw0}). As the reader will inevitably notice, we will be very loose in many of the proofs. The main reason is that as we are
dealing with $W$-type and Tower-type functions, many ``improvements'' make
absolutely no difference even on the quantitative bounds one obtains. Hence, we opted
for statements that are simpler to state and apply.

\begin{proof}[Proof of Claim \ref{lem:balanced}]
First, notice that for any $m \geq 1$, we
can simply repeat the partition
$A_i = \{1\}, B_i = \{2\}$, a total of $m$ times to get $m$ partitions of the set $\{1, 2\}$
such that  there is no $i$ for which (distinct) $j,j'$
appear in the same part.
Since for $1 \leq m \leq 16$, we have $\phi(m) = 2^{\lceil m/16 \rceil} = 2$, the claim holds
for these values of $m$.

Suppose now that $m \geq 17$, set $M = \phi(m) = 2^{\lceil m/16 \rceil}$ and
consider a randomly generated sequence $(A_i,B_i)^{m}_{i=1}$ of partitions of $[M]$ obtained as follows;
for each $1 \leq i \leq m$ and each $1 \leq j \leq M$ we assign element $j$ to $A_i$ with probability $1/2$ (all $mM$ choices being independent).
Fix a pair of distinct elements $j,j' \in [M]$. Clearly the number of $i$ such that $j,j'$ belong to the same class in $(A_i,B_i)$ is
distributed as the binomial random variable $B(m,1/2)$. Hence, we get from a standard application of Chernoff's inequality that the probability that the number of
these $i$ is larger than $3m/4$ is bounded by $e^{-m/6}$. Hence, the probability that some pair of distinct $j,j' \in [M]$ belong to the same part in more than
$3m/4$ of the partitions is bounded by ${M \choose 2}e^{-m/6} < 1$ so the required sequence of partitions exists.
\end{proof}

\begin{proof}[Proof of Claim \ref{clm:W}]
Let us start by proving that

\begin{equation}\label{eqtower1}
 T^{\phi}(x) \geq T(\lfloor x/2 \rfloor) \;,
\end{equation}
as we have previously claimed. We first notice that when $x \geq 256$ we have
$2^{x/16} \geq 16 x$, implying that in this case we have

\begin{equation}\label{eqtower2}
\phi(\phi(t)) \geq 2^{2^{t/16}/16} \geq 2^t\;.
\end{equation}
Now, one can verify that (\ref{eqtower1}) holds when $1 \leq x \leq 10$ and that $T(x)\geq 256$
when $x \geq 4$. Thus, when $x \geq 11$, we have

$$
T^{\phi}(x) \geq \phi(\phi(T^{\phi}(x-2))) \geq_{(\ref{eqtower1})} \phi(\phi(T(\lfloor x/2 \rfloor - 1))) \geq_{(\ref{eqtower2})} 2^{T(\lfloor x/2 \rfloor - 1)} = T(\lfloor x/2 \rfloor)\; .
$$

We now recall (\ref{eqw0}) which implies that since we can assume that $\epsilon$ is small enough, we can also assume that $w(1)$ is large enough.
In particular we have $w(1) \gg W(1) = T(1) = 2$. Let us denote $\hat{T}(t) = \lfloor \log \log (T^{\phi}(t)) \rfloor$. So $w(i)$ is just $\hat T$ iterated $i$ times with $w(1) = \lfloor \log \log (1/\epsilon) \rfloor$. Now we shall show that for any large enough $t$, $\hat T(\hat T(t)) > T(t)$. Using induction, it would follow that for all $i \geq 1$, $w(i) >  W(\lfloor i/2 \rfloor)$, thus completing the proof. Now
\begin{eqnarray*}
\hat T(\hat T(t)) &=& \lfloor \log \log (T^{\phi}(\lfloor \log \log (T^{\phi}(t) )\rfloor ) ) \rfloor \\
& \geq &  \frac14\log \log \left(T \left(\frac{1}{4}  \log \log \left(T \left(t/4 \right) \right)\right) \right)  \\
& \geq & \frac14T \left(\frac{1}{4} T \left(t/4 - 2  \right) - 2 \right) \\
& \geq & \frac14T \left(\frac{1}{5} T\left( \frac{t}{5}\right) \right) \\
& \geq & T(t)\;,
\end{eqnarray*}
where in the first inequality we apply (\ref{eqtower1}), in the second we use the fact that $\log \log (T(x)) = T(x-2)$,
and the last holds for all large enough $t$.
\end{proof}

We now turn to the proof of Claim \ref{trapsexist}.
Recall that given two sets of vertices $R,R'$, which are not necessarily disjoint, we used $e(R,R')$ to denote the number
of edges connecting a vertex in $R$ to a vertex in $R'$, where an edge belonging to $R \cap R'$ is counted twice.

\begin{claim}\label{lem:random1}
There is a constant $C$, such that if $m=m_b \geq C$ and ${\cal O}$ is a random graph from $G(m, 1/2)$, then with probability at least
$3/4$ it satisfies the first condition of a trap (as stated in Definition \ref{def:trap}).
\end{claim}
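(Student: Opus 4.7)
Set $s = \lceil \sqrt{m}/4 \rceil$. The plan is the standard ``fix-one-pair then Chernoff then union-bound'' approach, with a small twist due to the doubling convention in the definition of $e(R,R')$. I would first fix a pair $R, R' \subseteq V({\cal O})$ with $|R|=|R'|=s$, and rewrite
$$
e(R,R') \;=\; \sum_{e} c_e \, I_e,
$$
where the sum is over edges $e$ of the complete graph on $V({\cal O})$, $I_e$ is the indicator that $e$ is present in ${\cal O}$ (so $I_e \sim \mathrm{Bernoulli}(1/2)$ independently), and $c_e \in \{0,1,2\}$ equals $2$ precisely when both endpoints of $e$ lie in $R \cap R'$ and equals $1$ when exactly one contribution of the ordered pairs $(x,y) \in R \times R'$ with $\{x,y\} = e$ exists.

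A short computation then shows that $\mathbb{E}[e(R,R')] = \tfrac{1}{2}|R||R'| - \tfrac{1}{2}|R \cap R'|$, so the expectation differs from the target value $\tfrac{1}{2}|R||R'|$ by at most $s/2$. Since the number of non-zero $c_e$ is at most $|R||R'| = s^2$ and each $c_e \leq 2$, Hoeffding's inequality applied to the weighted sum of independent Bernoullis gives
$$
\Pr\!\left[ \big| e(R,R') - \mathbb{E}[e(R,R')] \big| > t \right] \;\leq\; 2\exp\!\left( -\Omega(t^2/s^2) \right).
$$
Taking $t = \tfrac{1}{4}|R||R'| - s/2 = \Theta(s^2)$, which is the slack allowed by the trap condition after absorbing the $s/2$ expectation bias, the failure probability for a single pair is at most $\exp(-\Omega(s^2))$.

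Finally, I would union-bound over all pairs $(R,R')$ of $s$-subsets of $V({\cal O})$. The number of such pairs is at most $\binom{m}{s}^2 \leq m^{2s}$, so the overall failure probability is at most $\exp\!\bigl(2s\log m - \Omega(s^2)\bigr)$. Since $s = \Theta(\sqrt{m})$, the term $\Omega(s^2) = \Omega(m)$ dominates $2s \log m = O(\sqrt{m}\log m)$ for all sufficiently large $m$, so there exists a constant $C$ such that this probability is below $1/4$ whenever $m \geq C$, proving the claim. I do not foresee any real obstacle here; the only mild annoyance is keeping track of the doubling of edges in $R \cap R'$, which shifts the mean by at most $s/2$ and replaces Bernoullis by bounded random variables with coefficients in $\{1,2\}$, both absorbed comfortably by Hoeffding and the generous gap $\tfrac{1}{4}|R||R'|$.
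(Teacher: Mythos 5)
Your proposal is correct and follows essentially the same route as the paper: fix a pair of $\lceil\sqrt{m}/4\rceil$-sets, compute $\mathbb{E}[e(R,R')]=\tfrac12(|R||R'|-|R\cap R'|)$ to handle the double-counting on $R\cap R'$, apply a Chernoff/Hoeffding bound to the (bounded-coefficient) sum of edge indicators, and union-bound over $\binom{m}{s}^2\leq m^{2s}$ pairs, with $e^{-\Omega(m)}$ beating $m^{O(\sqrt{m})}$. No gaps.
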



\begin{proof}
Fix two sets $R,R'$ of size $r=\lceil \sqrt{m}/4 \rceil$. Given distinct $\ell,\ell'$ let $z_{\ell,\ell'}$ be the indicator for the event
that $(\ell,\ell') \in E({\cal O})$, and $z_{R,R'}=\sum_{\ell \in R, \ell' \in R'}z_{\ell,\ell'}$. Then,
$$
\frac{3r^2}{8} \leq {r \choose 2} \leq {\mathbb E}[z_{R,R'}]={\mathbb E}[e(R,R')]=\frac12\left(r^2-|R \cap R'|\right) \leq \frac{r^2}{2}\;,
$$
for all large enough $m$. Now observe that $z_{R,R'}$ is a sum of at least ${r \choose 2}$ indicators $z_{\ell,\ell'}$ and each $z_{\ell,\ell'}$ can change the value of $z_{R,R'}$ by at most $2$.
We thus get from a standard application of Chernoff's inequality that
$$
{\mathbb P}\left[\left|e(R,R')-\frac12r^2\right| \geq \frac14r^2\right] \leq {\mathbb P}\left[\left|z_{R,R'}- {\mathbb E}[z_{R,R'}]\right| \geq \frac18r^2\right] \leq  e^{-\frac{r^2}{100}}\;.
$$
Hence the probability that there is any pair of sets $R,R'$ satisfying $|e(R,R')-\frac12 r^2| > \frac14r^2$ is at most
$$
{m \choose r}^22^{-\frac{1}{100}r^2} \leq m^{\sqrt{m}}e^{-m/1600} \ll 1/4\;,
$$
for all large enough $m$. \end{proof}

\begin{claim}\label{lem:random2}
There is a constant $C$, such that if $m=m_b \geq C$ and ${\cal O}$ is a random graph from $G(m, 1/2)$, then with probability at least
$3/4$, it satisfies the second condition of a trap (as stated in Definition \ref{def:trap}).
\end{claim}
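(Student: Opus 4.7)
The plan is to follow the template of the proof of Claim~\ref{lem:random1}: for each valid tuple $(b',i,j,k,R,R')$, apply a Chernoff bound to $e(R,R')$ and then take a union bound over all such tuples. Fix such a tuple and set $n := |U^{b'}_j| = m/m_{b'}$ and $N := \lceil n/k \rceil$. Exactly as in Claim~\ref{lem:random1}'s proof, $e(R,R')$ is a weighted sum of $\Theta(|R||R'|)$ independent Bernoulli$(1/2)$ indicators with expectation $\tfrac12|R||R'| - O(|R \cap R'|)$, so Chernoff yields
$$
\Pr\!\left[\left|e(R,R') - \tfrac12|R||R'|\right| > \tfrac{|R||R'|}{k^2}\right] \le 2\exp\!\left(-c\,|R||R'|/k^4\right)
$$
for some absolute constant $c>0$ (the $|R \cap R'|$ correction is absorbed by the $k^2$ slack since $k \ge 200$).

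Since the condition is vacuous unless $n \ge k^6$ (otherwise no $R$ of size $k^6$ exists in $U^{b'}_i$), I may assume $|R||R'| \ge k^6 \cdot (n/k) = k^5 n$, and the per-tuple failure probability becomes at most $2\exp(-ckn)$. For the internal union over $(R,R')$ I would use the standard estimate $\binom{n}{j} \le (en/j)^j$ together with the inequality $x(1+\log(y/x)) \le y$ for $y \ge x \ge 1$ (applied with $x=k^6$, $y=n$) to conclude $\log \binom{n}{k^6} \le n$, and similarly $\log \binom{n}{N} \le (n/k)(1+\log k)$. Both expressions are $o(kn)$ once $k \ge 200$, so they get absorbed into the Chernoff exponent, leaving a failure probability of $\exp(-c' kn)$ for some $c'>0$ at each fixed $(b',i,j,k)$.

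The remaining union is over $(i,j,k,b')$. The $k$-sum contributes a factor of at most $\log m$, and the $b'$-sum has at most $b = O(\log^* m)$ terms thanks to the tower growth $m_b = T^{\phi}(b)$. The potentially dangerous factor is $m_{b'}^2 = (m/n)^2$ from the $(i,j)$-union bound, and I expect this to be the main obstacle. To control it, I would exploit the tower spacing of the canonical partition sizes: the smallest possible value of $n$ across $b' < b$ is $n_{b-1} = \phi(m_{b-1}) = 2^{\lceil m_{b-1}/16 \rceil}$, which is exponential in $m_{b-1}$, whereas $\log m = \log(m_{b-1}\phi(m_{b-1}))$ is only linear in $m_{b-1}$. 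Hence $kn \ge 200 \cdot 2^{m_{b-1}/16}$ comfortably dwarfs the $2\log m$ needed to absorb the $(m/n)^2$ factor, provided $m_{b-1}$ exceeds a small absolute constant; this is guaranteed by choosing $C$ large enough that $m \ge C$ forces $m_{b'}$ (for every $b' < b$) into the required range. Summing everything then gives a total failure probability well below $1/4$, completing the proof.
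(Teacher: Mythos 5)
Your proposal is correct and follows essentially the same route as the paper's proof: a Chernoff bound per tuple $(b',i,j,k,R,R')$, absorption of the $\binom{n}{k^6}\binom{n}{\lceil n/k\rceil}$ entropy into the exponent $-\Theta(kn)$, and a union bound over the remaining parameters that succeeds because the cluster size $n=h_{b'}\geq h_{b-1}=\phi(m_{b-1})$ is exponentially large compared to $\log m \approx m_{b-1}/16$. The only cosmetic difference is that the paper first works out the case $b'=b-1$ explicitly and then observes that $h_{b'}\geq h_{b-1}$ handles all smaller $b'$, whereas you treat general $b'$ directly.
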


\begin{proof} Let us start by considering the case $b'=b-1$. Suppose $U_1,\ldots,U_{m_{b-1}}$ is the partition of $V({\cal O})$ induced by the partition
${\cal P}_{b-1}$ (as discussed prior to Definition \ref{def:trap}).
Now recall (see Subsection \ref{subsec:gowconstruct}) that the integers $m_b$ satisfy the relation
$$
m=m_b=m_{b-1}\phi(m_{b-1})=m_{b-1}2^{\lceil m_{b-1}/16 \rceil}\;.
$$
This means that
\begin{equation}\label{eqt0}
\log(m) \leq m_{b-1} \leq 17\log (m)\;,
\end{equation}
so the size of the sets $U_i$, which we will denote by $h_{b-1}$, satisfies
\begin{equation}\label{eqt1}
m/17\log(m) \leq h_{b-1}=m/m_{b-1} \leq m/\log(m)\;.
\end{equation}
Fix now two sets $U_i,U_j$, an integer $200 \leq k \leq \log(m)$, a subset $R \subseteq U_i$ of size $k^6$ and a subset $R' \subseteq U_j$ of size
$\lceil h_{b-1}/k \rceil$.  Given distinct $\ell,\ell'$ with $\ell \in R$ and $\ell' \in R'$ let $z_{\ell,\ell'}$ be the indicator for the event
that $(\ell,\ell') \in E({\cal O})$, and $z_{R,R'}=\sum_{\ell \in R, \ell' \in R'}z_{\ell,\ell'}$. Then
\begin{eqnarray*}
\frac{|R||R'|}{2} \geq {\mathbb E}[z_{R,R'}]={\mathbb E}[e(R,R')]&=&\frac12\left(|R||R'|- |R \cap R'|\right)\\
&\geq& \frac{1}{2}|R||R'|-\frac{1}{2}|R|\\
&\geq& \left(\frac12-\frac{1}{2k^2}\right)|R||R'| \;.
\end{eqnarray*}
where in the last inequality we use the facts that $k \leq \log(m)$, that $|R'|= \lceil h_{b-1}/k \rceil \geq_{(\ref{eqt1})} m/17k \log(m)
\geq m/17 \log^2 (m)$ and that we can pick $m$ to be large enough
so that $|R'| \geq k^2$.

Note that $z_{R,R'}$ is a sum of at least $|R|(|R'|-|R|) \geq |R||R'|/2$ indicators $z_{\ell,\ell'}$ (we are using the fact that $|R| \ll |R'|$).
Since each of them can change $z_{R,R'}$ by at most $2$,
we get from Chernoff's inequality, the fact that $k \geq 200$, and the estimate for ${\mathbb E}[z_{R,R'}]$ from the previous paragraph that
\begin{eqnarray*}
{\mathbb P}\left[\left|e(R,R')-\frac12|R||R'|\right| \geq \frac{1}{k^2}|R||R'|\right] &\leq& {\mathbb P}\left[\left|z_{R,R'}-{\mathbb E}[z_{R,R'}]\right| \geq \frac{1}{2k^2}|R||R'|\right]\\
  &\leq& e^{-\frac{|R||R'|}{64k^4}}\\
  &\leq& e^{-kh_{b-1}/64}\\
  &\leq&  e^{-2h_{b-1}}\;.
\end{eqnarray*}
Now, there are $m^2_{b-1} = O(\log^2(m))$ ways to pick the sets $U_i,U_j$, $O(\log(m))$ ways to choose $k$, ${h_{b-1} \choose k^6}$ ways to pick $R$ and ${h_{b-1} \choose h_{b-1}/k}$ ways to pick
$R'$. Overall, we get from a union bound that the probability that some choice of $U_i$, $U_j$, $k$, $R$ and $R'$ will violate the second condition of Definition \ref{def:trap} is bounded by
\begin{equation}\label{eqt2}
O(\log^3 m){h_{b-1} \choose k^6}{h_{b-1} \choose h_{b-1}/k}e^{-2h_{b-1}} \leq m^{2k^6}(ek)^{h_{b-1}/k}e^{-2h_{b-1}} \leq m^{2\log^6(m)}e^{-h_{b-1}}\;,
\end{equation}
where in the first inequality we use the inequality ${n \choose k} \leq (en/k)^k$ and in the second the fact that $k \leq \log(m)$.

Let us now consider an arbitrary $b' < b$. Note that since $m_{b'} \leq m_{b-1}$, we still have
$m_{b'} \leq 17\log(m)$. Hence there are still only $O(\log^2(m))$ many ways to choose the sets $U^{b'}_i,U^{b'}_j$. This means that the upper
bound obtained in (\ref{eqt2}) for the probability of partition ${\cal P}_{b-1}$ violating the condition applies to any given partition ${\cal P}_{b'}$, with
$h_{b-1}$ replaced by $h_{b'}$. But since $h_{b'} \geq h_{b-1}$ the right hand side of the bound in (\ref{eqt2}) still holds.

We finally recall (\ref{eq:parorder}) stating that $m_b=T^{\phi}(b)$.
As we noted in (\ref{eqtower1}) we have $T^{\phi}(b) > T(\lfloor b/2 \rfloor)$. Hence the number of $b' < b$ we need to consider is only $O(\log^{*}(m))$. So combining this fact
with the discussion in the previous paragraph we get that the probability of any partition ${\cal P}_{b'}$ violating the second condition of Definition \ref{def:trap} is
bounded by
$$
m^{3\log^6(m)}e^{-h_{b-1}} \ll 1/4
$$
where we apply the fact that $h_{b-1} \geq m/17\log(m)$, stated in (\ref{eqt1}). \end{proof}

\begin{proof}[Proof of Claim \ref{trapsexist}] Follows immediately from Claims \ref{lem:random1} and \ref{lem:random2}.
\end{proof}

We will now prove two lemmas which will somewhat streamline the application of the properties of traps later on in the paper.
Both lemmas will rely on the observation stated in Lemma \ref{convcomb} below.
In what follows, we use $v_S \in \mathbb{R}^n$, with $S \subseteq [n]$ to denote the vector whose $i^{th}$ entry is $1/|S|$ when $i \in S$ and $0$ otherwise.
Let ${\cal V}_k=\{v_S:S \subseteq [n], |S|=k\}$.

\begin{lemma}\label{convcomb} If $x \in [0,1/k]^n$ and $\sum x_i=1$, then $x$ is a convex combination of the vectors of ${\cal V}_k$.
\end{lemma}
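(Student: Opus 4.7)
The plan is to proceed by induction on the quantity $N(x) = |\{i : x_i \notin \{0,1/k\}\}|$, which counts the number of ``non-extreme'' coordinates of $x$. The base case is $N(x)=0$: then every coordinate is either $0$ or $1/k$, and since $\sum x_i = 1$, exactly $k$ coordinates equal $1/k$. So $x = v_S$ itself, where $S$ is the support, giving a trivial convex combination.

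Before treating the inductive step, observe that the case $N(x)=1$ cannot occur: if only one coordinate $x_i$ were non-extreme, then $\sum_{j \neq i} x_j$ would be an integer multiple of $1/k$, forcing $x_i = 1 - \sum_{j \neq i} x_j$ to be a multiple of $1/k$ as well, contradicting $x_i \notin \{0,1/k\}$. Therefore whenever $N(x) > 0$ we in fact have $N(x) \geq 2$.

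For the inductive step, assume $N(x) \geq 2$ and pick any two non-extreme coordinates $i,j$. Set
\[
t^+ = \min(1/k - x_i,\, x_j) > 0, \qquad t^- = \min(x_i,\, 1/k - x_j) > 0,
\]
and define $x^+ = x + t^+(e_i - e_j)$ and $x^- = x - t^-(e_i - e_j)$. Since the perturbation is along $e_i - e_j$, both $x^+$ and $x^-$ still have coordinate sum $1$, and by construction of $t^\pm$ they both remain in $[0,1/k]^n$; moreover, at least one of the coordinates $i,j$ becomes extreme ($0$ or $1/k$) in each of $x^\pm$, so $N(x^\pm) \leq N(x) - 1$. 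Writing $\lambda = t^-/(t^+ + t^-) \in (0,1)$, a direct computation gives $x = \lambda x^+ + (1-\lambda)x^-$. Applying the induction hypothesis to $x^+$ and $x^-$, each is a convex combination of vectors from $\mathcal{V}_k$, so $x$ is as well.

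There is no real obstacle here; the one point that requires a moment's care is the parity-style observation that $N(x) = 1$ is impossible (which is what lets the induction bottom out cleanly at $N=0$), and the verification that the exchange direction $e_i - e_j$ preserves the simplex constraint $\sum x_i = 1$. Both are immediate. An alternative, equally short route would be to argue that $P = \{x \in [0,1/k]^n : \sum x_i = 1\}$ is a bounded polytope whose vertices are characterized by having at most one non-tight box inequality, and then invoke the same mod-$1/k$ argument to conclude that the vertices are exactly the elements of $\mathcal{V}_k$; the inductive exchange argument above is essentially a constructive version of this.
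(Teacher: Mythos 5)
Your proof is correct, and it takes a mildly but genuinely different route from the paper's. The paper proves the lemma by black-boxing a result from linear programming (every point of a bounded polyhedron is a convex combination of its vertices, Theorem \ref{thm:lpconvex}) and then showing that every vertex of $P=\{x: \sum_i x_i=1,\ 0\le x_i\le 1/k\}$ lies in $\mathcal{V}_k$; the non-vertex argument there uses exactly your exchange perturbation $u\pm\varepsilon(e_i-e_j)$ on a pair of non-extreme coordinates. You instead run the same perturbation as a direct induction on the number $N(x)$ of non-extreme coordinates, pushing the perturbation until a coordinate hits $0$ or $1/k$ and writing $x=\lambda x^+ +(1-\lambda)x^-$ with $N(x^\pm)\le N(x)-1$. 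The computations check out: $t^\pm>0$ because $i,j$ are non-extreme, the direction $e_i-e_j$ preserves the coordinate sum, the choice $\lambda=t^-/(t^++t^-)$ recovers $x$, and your parity observation correctly rules out $N(x)=1$ so the induction bottoms out at $N=0$, where $x\in\mathcal{V}_k$ outright. What your version buys is self-containedness (no appeal to the representation theorem for polytopes) and an explicit, constructive decomposition; what the paper's version buys is brevity, since after citing the LP theorem it only needs the non-vertex half of the argument. The core idea --- trading mass between two non-extreme coordinates along $e_i-e_j$ --- is the same in both.
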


Before we prove this lemma, we need a standard theorem from linear programming theory,
which we state without proof. A polyhedron $P\subseteq \mathbb R^n$ is the set of points satisfying a finite number
of linear inequalities. $P$ is {\em bounded} if there is a constant $C$ such that $\lVert x \rVert \leq C$ for all $x \in P$.
Finally, a point $x \in P$ is said to be a {\em vertex} of $P$ if it cannot be represented as a proper convex combination
of two distinct points $x', x'' \in P$.

\begin{theorem}[\cite{bertsimas}]\label{thm:lpconvex}
For every bounded polyhedron $P \subseteq \mathbb R^n$ and $x \in P$, the point $x$
can be written as a convex combination of the vertices of $P$.
\end{theorem}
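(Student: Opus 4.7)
The plan is to realize the set $P = \{x \in \mathbb{R}^n : 0 \leq x_i \leq 1/k \text{ for all } i,\ \sum_{i=1}^n x_i = 1\}$ as a bounded polyhedron, verify that the given $x$ lies in $P$, and then apply Theorem \ref{thm:lpconvex} to conclude that $x$ is a convex combination of the vertices of $P$. The work is then entirely in identifying these vertices and showing that they are exactly the elements of ${\cal V}_k$.

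To identify the vertices, I would recall the standard characterization: a point $y \in P$ is a vertex if and only if the set of constraints tight at $y$, taken as equalities, has a unique solution (equivalently, contains $n$ linearly independent rows). The defining constraints of $P$ are the $n$ lower bounds $x_i \geq 0$, the $n$ upper bounds $x_i \leq 1/k$, and the single equality $\sum_i x_i = 1$. The equality contributes one tight constraint automatically, so a vertex must have at least $n-1$ of the box constraints tight. Note that $x_i \geq 0$ and $x_i \leq 1/k$ cannot be simultaneously tight (as $k \geq 1$), so for each coordinate at most one of the two is tight.

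The main step is a short case analysis. Let $a$ be the number of coordinates with $y_i = 0$ and $b$ the number with $y_i = 1/k$; then $a + b \geq n - 1$. If $a + b = n$, all coordinates are fixed, and the equality $\sum_i y_i = b/k = 1$ forces $b = k$, giving $y = v_S$ for the set $S$ of coordinates equal to $1/k$, with $|S| = k$. If instead $a + b = n - 1$, there is a single free coordinate $y_j$, forced by the equality to equal $1 - b/k$; but the open range $(0, 1/k)$ for $y_j$ requires $k - 1 < b < k$, which has no integer solution, so this case is vacuous. Conversely, each $v_S$ with $|S| = k$ indeed lies in $P$ and has exactly $n$ linearly independent tight constraints (the $n-k$ zero coordinates, the $k-1$ upper-bound constraints on $S$ minus one which is redundant with the sum equation, plus the equation itself), hence is a vertex.

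Since $P$ is bounded (contained in $[0,1/k]^n$) and $x$ satisfies all the defining inequalities by hypothesis, Theorem \ref{thm:lpconvex} yields the desired convex combination. The only potential subtlety is making sure the linear independence count at each $v_S$ is correct, which is where I would be most careful; but because $k$ is an integer, the range argument that rules out ``half-integer'' vertices in Case 2 is clean, and no further technicalities arise.
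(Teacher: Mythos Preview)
Your proposal does not prove Theorem \ref{thm:lpconvex}; it \emph{applies} it. What you have actually written is a proof of Lemma \ref{convcomb} (that every $x\in[0,1/k]^n$ with $\sum_i x_i=1$ lies in the convex hull of ${\cal V}_k$), and in that argument you invoke Theorem \ref{thm:lpconvex} as a black box. As a proof of Theorem \ref{thm:lpconvex} this is circular. Note that the paper does not prove Theorem \ref{thm:lpconvex} either: it is explicitly stated without proof as a standard fact from linear programming, with a citation to \cite{bertsimas}.

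If your intended target was in fact Lemma \ref{convcomb}, then your argument is correct and differs from the paper's only in how you identify the vertices of $P$. The paper argues by perturbation: if a vertex $u$ had a coordinate $u_i\in(0,1/k)$, then a second such coordinate $u_j$ exists, and setting $u'=u+\ve(e_i-e_j)$, $u''=u-\ve(e_i-e_j)$ for small $\ve>0$ exhibits $u$ as the midpoint of two distinct points of $P$, contradicting extremality. You instead use the basic-feasible-solution characterization (a vertex must have $n$ linearly independent tight constraints) and count, using the integrality of $k$ to rule out a single free coordinate. Both approaches are standard and of comparable length; yours leans on slightly more LP machinery, the paper's is more self-contained.
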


\begin{proof}[Proof of Lemma \ref{convcomb}]
Consider the polyhedron
$$P = \left\{ x  ~: \;\sum_i x_i = 1, \mbox{ and } 0 \leq x_1,\ldots, x_n\leq 1/k\right\}\;.$$
Notice that for all $x \in P$, we have $\lVert x \rVert \leq 1$. Let ${\cal V}$ be the set of vertices of $P$.
By Theorem \ref{thm:lpconvex}, we have that any $x \in P$ is a convex combination of ${\cal V}$. So we need to show that\footnote{We clearly have ${\cal V}_k \subseteq {\cal V}$ but this direction is not needed.}
${\cal V} \subseteq {\cal V}_k$.

Suppose $u \in {\cal V}$. If all its entries are either $0$ or $1/k$ it obviously belongs to ${\cal V}_k$.
So suppose that $u$ has an entry $u_i \in (0, 1/k)$. Then there exists at least one more entry $u_j \in (0, 1/k)$,
because otherwise the entries cannot sum to $1$. Let $\ve_u = \frac{1}{2} \min \{u_i, u_j, 1/k-u_i, 1/k - u_j \}$.
Let $e_i$ denote the canonical basis vector where the $i$th entry is 1 and all the other
entries are 0. Similarly define $e_j$. Let $u' = u + \ve_u e_i - \ve_u e_j$
and $u'' = u - \ve_u e_i + \ve_u e_j$. It can be checked that both $u', u'' \in P$
and that $u' + u'' = 2u$. So $u$ can be written as the convex combination of two other
vectors in $P$, which means that $u$ is not a vertex of $P$.
\end{proof}

We now turn to prove two lemmas. The first one will help us in applying the first property of traps in proving Lemma \ref{lem:trapworks},
while the second one will help us in applying the second property of traps in proving Lemma \ref{lem:gowtrap}.

\begin{lemma}\label{lem:random} Suppose ${\cal O}$ is the graph that was
used when defining the trap on partition ${\cal P}_{b}$ (so $|V({\cal O})|=m_b$ and we can assume that ${\cal O}$ satisfies the first condition of Definition \ref{def:trap}).
Let $Q$ be the adjacency matrix of ${\cal O}$, and suppose $x,y \in [0,1]^{m_b}$ satisfy $\sum x_i=\sum y_i=g \geq \sqrt{m_b}/2$.
Then we have
$$
\left|x^TQy-\frac12g^2\right| \leq \frac14g^2\;.
$$
\end{lemma}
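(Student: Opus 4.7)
The plan is to reduce the statement for arbitrary vectors $x,y$ to the uniform-on-$k$-sets case handled by the first condition in Definition \ref{def:trap}, using Lemma \ref{convcomb} as the bridge. Note that for the first property of a trap, we have $e(R,R') = \mathbf{1}_R^T Q \mathbf{1}_{R'}$ (with the ``counted twice'' convention matching the bilinear form), so the condition can be restated as $|v_R^T Q v_{R'} - 1/2| \leq 1/4$ whenever $|R|=|R'| = \lceil\sqrt{m_b}/4\rceil$, where $v_S = \mathbf{1}_S/|S|$.

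First I would set $k = \lceil \sqrt{m_b}/4\rceil$. Since $g \geq \sqrt{m_b}/2$ and $k \leq \sqrt{m_b}/4 + 1$, and we may assume $m_b$ is large (recall $m_b \geq m_{w(1)}$ is huge), we have $g \geq k$. Combined with $x_i \in [0,1]$ this gives $(x/g)_i \leq 1/g \leq 1/k$, and of course $\sum_i (x/g)_i = 1$; the same holds for $y/g$. Lemma \ref{convcomb} then yields representations
\[
x/g \;=\; \sum_{\alpha} \lambda_\alpha v_{S_\alpha}\;,\qquad y/g \;=\; \sum_{\beta} \mu_\beta v_{T_\beta}\;,
\]
where each $S_\alpha, T_\beta \subseteq [m_b]$ has size exactly $k$, and $\{\lambda_\alpha\},\{\mu_\beta\}$ are nonnegative weights summing to $1$.

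Next I would expand the bilinear form:
\[
x^T Q y \;=\; g^2 \sum_{\alpha,\beta} \lambda_\alpha \mu_\beta\, v_{S_\alpha}^T Q\, v_{T_\beta}\;.
\]
Since $v_{S_\alpha}^T Q v_{T_\beta} = e(S_\alpha, T_\beta)/(|S_\alpha||T_\beta|)$ and $|S_\alpha| = |T_\beta| = k$, the first condition of Definition \ref{def:trap} applied to $\mathcal{O}$ gives
\[
\Bigl| v_{S_\alpha}^T Q\, v_{T_\beta} - \tfrac{1}{2} \Bigr| \;\leq\; \tfrac{1}{4}
\]
for every pair $(\alpha,\beta)$. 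Taking the convex combination with coefficients $\lambda_\alpha \mu_\beta$ (which sum to $1$), the same bound is preserved, yielding
\[
\Bigl| (x/g)^T Q (y/g) - \tfrac{1}{2} \Bigr| \;\leq\; \tfrac{1}{4}\;,
\]
and multiplying through by $g^2$ gives the conclusion.

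There is no real obstacle here; the only step requiring care is verifying that the quantitative hypothesis $g \geq \sqrt{m_b}/2$ is strong enough to guarantee $g \geq k = \lceil \sqrt{m_b}/4 \rceil$ so that Lemma \ref{convcomb} applies with the parameter $k$ coming from the trap's first condition. This is exactly why the lemma is stated with the threshold $\sqrt{m_b}/2$ and not something smaller.
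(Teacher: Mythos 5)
Your proof is correct and follows essentially the same route as the paper: normalize to $x/g$, $y/g$, apply Lemma \ref{convcomb} with $k=\lceil\sqrt{m_b}/4\rceil$ to decompose into vectors $v_R$ of ${\cal V}_k$, and transfer the trap's first condition through the convex combination. Your explicit check that $g\geq k$ (so that the entries of $x/g$ are at most $1/k$) is a detail the paper leaves implicit, but the argument is the same.
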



\begin{proof} The vectors $x/g$ and $y/g$ satisfy the condition of Lemma \ref{convcomb} with $k=\lceil \sqrt{m_b}/4 \rceil$. Hence we can express
$x/g$ and $y/g$ as convex combinations of the vectors of ${\cal V}_k$
as $x/g=\sum_{R}a_R v_R$ and $y/g=\sum_{R'}b_{R'} v_{R'}$.
Observe further that $(v_R)^TQv_{R'}=e(R,R')/|R||R'|$.
Since $|R|=|R'|=k=\lceil \sqrt{m_b}/4 \rceil$ and we assume that ${\cal O}$ satisfies the first condition of being a trap, we can infer that for any $R$ and $R'$ we have
\begin{equation}\label{eqexp2}
1/4 \leq (v_R)^TQv_{R'} \leq 3/4\;.
\end{equation}
We can thus infer from (\ref{eqexp2}) and the fact that $\sum_{R}a_{R} v_{R}$ and $\sum_{R'}b_{R'} v_{R'}$ are convex combinations that
\begin{eqnarray*}
(x/g)^TQ(y/g)&=&\left(\sum_{R}a_R v_R\right)^TQ\left(\sum_{R'}b_{R'} v_{R'}\right)\\
     &=&\sum_{R,R'}a_{R}b_{R'} (v_{R})^TQv_{R'}\\
     &\leq& \frac34 \sum_{R,R'}a_{R}b_{R'}\\
     &=&\frac34\;,
\end{eqnarray*}
implying that $x^TQy \leq \frac34g^2 $. An identical argument gives $x^TQy \geq \frac14g^2$, which completes the proof. \end{proof}

\begin{lemma}\label{lem:random3} Suppose ${\cal O}$ is the graph that was
used when defining the trap placed on partition ${\cal P}_{b}$ (so $|V({\cal O})|=m_b$ and we can assume that ${\cal O}$ satisfies the second condition of Definition \ref{def:trap}).
Let $Q$ be the adjacency matrix of ${\cal O}$.
Let $b' < b$, set $m=m_{b'}$ and let $X_1,\ldots,X_{m}$ be the partition of $V({\cal O})$ induced\footnote{This was defined explicitly just before Definition \ref{def:trap}.
Since we are identifying the clusters of ${\cal P}_b$ with the vertices of ${\cal O}$ we can also identify these clusters with the indices of the adjacency matrix $Q$.
Hence, since we think of $X_i$ as a subset of vertices of ${\cal O}$, we can say (as we will in item 2) that an index of a vector $x \in [0,1]^{m_b}$ belongs to $X_i$.} by ${\cal P}_{b'}$.
Suppose each of the sets $X_i$ has size $h$ and let $X_i,X_j$ be two of these sets.
Suppose $\delta$ and $x, y \in [0,1]^{m_b}$ satisfy the following conditions:
\begin{enumerate}
\item  $1/\log(m_b)< \delta < 1/200$.
\item The vector $x$ has non-zero entries only in $X_i$ and $y$ has non-zero entries only in $X_j$.
\item For each $1 \leq p' \leq m_b$ we have $x_{p'}/(\sum_p x_p) < \delta^6$.
\item $\sum^{m_b}_{p=1} y_p > 2\delta h$.
\end{enumerate}
Then, setting $g_1=\sum_p x_p$ and $g_2=\sum_p y_p$, we have
\begin{equation}\label{eqtcond}
\left|x^TQy-\frac12g_1g_2\right| \leq 2\delta^2g_1g_2\;.
\end{equation}
\end{lemma}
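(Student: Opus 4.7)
The plan is to mirror the proof of Lemma \ref{lem:random}, invoking the \emph{second} condition of Definition \ref{def:trap} instead of the first. First I would set $k = \lfloor 1/\delta \rfloor$. Condition 1 then gives $200 \leq k \leq \log(m_b)$, so $k$ lies in the valid range for the trap's second property; and since $\delta < 1/200$, a routine calculation gives $1/k^2 \leq 2\delta^2$ and $1/k^6 \geq \delta^6$ (with a little slack to spare).

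The next step is to express $x/g_1$ and $y/g_2$ as convex combinations of normalized indicator vectors via Lemma \ref{convcomb}. For $x$: condition 3 gives $x_p/g_1 < \delta^6 \leq 1/k^6$, so $x/g_1 = \sum_R a_R v_R$ where each $R$ has size $k^6$, with $a_R \geq 0$ and $\sum_R a_R = 1$; by condition 2 (support in $X_i$) and the nonnegativity of the $a_R$, every $R$ with $a_R > 0$ must be a subset of $X_i = U_i^{b'}$. For $y$: I set $k_2 = \lceil h/k \rceil$; using $k \geq 1/(2\delta)$ one has $k_2 \leq 2\delta h + 1$, and since the estimates in the proof of Claim \ref{lem:random2} give $h \geq m_b/(17 \log(m_b))$, the quantity $h\delta$ is much larger than $1$, so $k_2 < g_2$. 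Combined with $y_p \leq 1$, this yields $y_p/g_2 \leq 1/g_2 \leq 1/k_2$, and Lemma \ref{convcomb} produces $y/g_2 = \sum_{R'} b_{R'} v_{R'}$ with each $R' \subseteq X_j = U_j^{b'}$ of size $\lceil h/k \rceil$ (and $k^6, \lceil h/k \rceil \leq h$, also by the size estimates on $h$).

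Finally, using $(v_R)^T Q v_{R'} = e(R,R')/(|R||R'|)$ and the second trap condition with parameters $b', i, j, k, R, R'$, I obtain $|(v_R)^T Q v_{R'} - 1/2| \leq 1/k^2$. Writing
\[ x^T Q y \;=\; g_1 g_2 \sum_{R,R'} a_R b_{R'} (v_R)^T Q v_{R'} \]
and using $\sum_R a_R = \sum_{R'} b_{R'} = 1$, the main term contributes exactly $g_1 g_2 / 2$ and the error is bounded by $g_1 g_2/k^2 \leq 2 \delta^2 g_1 g_2$, which is precisely (\ref{eqtcond}).

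The main technical nuisance is the $y$-step: the integer ceiling in $\lceil h/k \rceil$ must be absorbed against the strict inequality $g_2 > 2\delta h$. I would handle this by invoking the lower bound on $h$ from the proof of Claim \ref{lem:random2} to conclude $h\delta \gg 1$, so that the ``$+1$'' from rounding is negligible relative to $2\delta h$. Aside from this careful bookkeeping, everything else is a direct transcription of the Lemma \ref{lem:random} argument using the second (rather than first) clause of Definition \ref{def:trap}.
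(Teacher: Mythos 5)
Your proposal is correct and follows essentially the same route as the paper's proof: set $k=\lfloor 1/\delta\rfloor$, decompose $x/g_1$ and $y/g_2$ via Lemma \ref{convcomb} into convex combinations of vectors $v_R$, $v_{R'}$ supported in $X_i$, $X_j$ with $|R|=k^6$ and $|R'|=\lceil h/k\rceil$, and apply the second trap condition termwise. Your extra bookkeeping on the ceiling in $\lceil h/k\rceil$ (using $\delta h\gg 1$) is a detail the paper elides, and your justification that each $R$ with $a_R>0$ lies in $X_i$ is sound.
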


\begin{proof} Put $k=\lfloor 1/\delta \rfloor$. Then item (1) of the lemma guarantees that $200 \leq k \leq \log(m_b)$.
Item (3) of the lemma guarantees that the vector $x/g_1$
satisfies the condition of Lemma \ref{convcomb} with respect to $k^6$.
Hence we can write $x/g_1=\sum_{R}a_R v_R$ using the vectors of ${\cal V}_{k^6}$.
Moreover, since item (2) guarantees that $x$ has non-zero entries only in $X_i$ we know
that in the convex combination $\sum_{R}a_R v_R$
all the sets $R$ satisfy $R \subseteq X_i$.
Observe now that item (2) guarantees that $y$ has non-zero entries only in $X_j$.
Item (4) of the lemma guarantees that the vector $y/g_2$
satisfies the condition of Lemma \ref{convcomb} with respect to $\lceil h/k \rceil$.
Hence we can write $y/g_2=\sum_{R'}b_{R'}v_{R'}$ using the vectors of ${\cal V}_{\lceil h/k \rceil}$.
Again, we know that in this convex combination we are only using sets $R' \subseteq X_j$.

Now, $(v_{R})^TQv_{R'}=e(R,R')/|R||R'|$. Hence, if $|R|=k^6$ and $|R'|=\lceil h/k \rceil$
and $R \subseteq X_i$, $R' \subseteq X_j$, then we can use the assumption
that ${\cal O}$ satisfies the second condition of being a trap, to conclude that
\begin{equation}\label{eqexp5}
\left|(v_{R})^TQv_{R'} - \frac12\right| \leq 1/k^2 \leq 2\delta^2\;.
\end{equation}
We can thus infer from (\ref{eqexp5}) and the facts that $\sum_{R}a_R v_R$ and $\sum_{R'}b_{R'} v_{R'}$ are convex combinations that
\begin{eqnarray*}
(x/g_1)^TQ(y/g_2)&=&\left(\sum_{R}a_R v_R\right)^TQ\left(\sum_{R'}b_{R'} v_{R'}\right)\\
     &=&\sum_{R,R'}a_R b_{R'}  (v_R)^TQv_{R'}\\
     &\leq& (1/2+2\delta^2) \sum_{R,R'}a_Rb_{R'}\\
     &=&(1/2+2\delta^2)
\end{eqnarray*}
implying that $x^TQ y \leq (1/2+2\delta^2)g_1g_2$. An identical argument gives $x^TQy \geq (1/2-2\delta^2)g_1g_2$, which completes the proof.
\end{proof}

\section{Proof of Lemma \ref{lem:trapworks}}\label{sec:lem2}

Suppose ${\cal A}=\{V_i: 1 \leq i \leq k\}$ and ${\cal B}=\{U_{i,i'}: 1 \leq i \leq k, 1 \leq i' \leq \ell\}$ (so $|{\cal B}|=k\ell$).
We will say that a pair of sets $(V_i,V_j)$ is {\em bad} if there are two sets $C_1,C_2 \subseteq [\ell] \times [\ell]$, each of size at least
$\epsilon \ell^2$ such that $|d(U_{i,i_1},U_{j,j_1})-d(U_{i,i_2},U_{j,j_2})| \geq 2\epsilon$ for every $(i_1,j_1) \in C_1$ and $(i_2,j_2) \in C_2$. Note that if $(V_i,V_j)$ is
bad then it cannot be good in the sense of Definition \ref{def:reg}. Hence, to show
that ${\cal A}$ and ${\cal B}$ fail to satisfy the second condition of Definition \ref{def:reg} it is enough to show that there are at least $\epsilon {k \choose 2}$ bad pairs $(V_i,V_j)$.
As we mentioned after the statement of Theorem \ref{thm:main}, we will actually show that there at least $(1-2\epsilon^{1/10}){k \choose 2}$ bad pairs.

A set $U_{i,i'}$ is called {\em useful} if there is an $X \in {\cal P}_b$ such that
$U_{i,i'} \subset_{\epsilon^{1/5}} X$.
If $U_{i,i'}$ is not useful, we call it {\em useless}.
A set $V_i$ is called {\em useful} if it contains\footnote{Recall that each $V_i$ is the union of $\ell$ sets $U_{i,i'}$.} less than 
$\epsilon^{1/10} \ell$
useless sets $U_{i,i'}$.
If $V_{i}$ is not useful, we call it {\em useless}.
Observe that there can be at most
$\epsilon^{1/10}k$
useless sets $V_i$, as otherwise ${\cal B}$
would not be an
$\epsilon^{1/5}$-refinement of ${\cal P}_b$, which would contradict the third assumption of the lemma.
Hence, there are at least $(1-2\epsilon^{1/10}){k \choose 2}$ pairs of useful sets $(V_i,V_j)$.
By the previous paragraph, it is enough to show that every such pair is bad.

So for the rest of the proof, let us fix a pair of useful sets $(V_i,V_j)$.
Let us assume that $\epsilon$ is small enough so that
$\epsilon^{1/5} < 1/2$.
Given a useful set $U_{i,i'} \subset_{\epsilon^{1/5}} X \in {\cal P}_b$,
we let $X_{{\cal P}_b}(U_{i,i'})$ denote this (unique) cluster in ${\cal P}_b$ that $\epsilon^{1/5}$-contains $U_{i,i'}$. We will later prove the following claim:

\begin{claim}\label{clm:middle} If $~V_i$ and $V_j$ are both useful, then there are $D_1,D_2 \subseteq [\ell] \times [\ell]$ satisfying the following:
\begin{itemize}
\item $D_1$ and $D_2$ have size at least $\frac{1}{32} \ell^2$.
\item For every $(i_1,j_1) \in D_1$ both $U_{i,i_1}$ and $U_{j,j_1}$ are useful and the pair $(X_{{\cal P}_b}(U_{i,i_1}),X_{{\cal P}_b}(U_{j,j_1}))$ belongs to the trap placed on ${\cal P}_b$.
\item For every $(i_2,j_2) \in D_2$ both $U_{i,i_2}$ and $U_{j,j_2}$ are useful and the pair $(X_{{\cal P}_b}(U_{i,i_2}),X_{{\cal P}_b}(U_{j,j_2}))$ does not belong to the trap placed on ${\cal P}_b$.
\end{itemize}
\end{claim}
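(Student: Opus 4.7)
The plan is to count, for each pair of useful sub-clusters in $V_i \times V_j$, whether the corresponding pair of parent clusters in ${\cal P}_b$ belongs to the trap placed on ${\cal P}_b$, and show that both the ``in-trap'' and the ``out-of-trap'' counts are at least a constant fraction of $\ell^2$. The claim will then follow by taking $D_1$ and $D_2$ to be these two sets. The heart of the argument is a direct application of the first property of the trap, accessed through Lemma \ref{lem:random}, after an appropriate rescaling.

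First I would set up the counts. Let $I = \{i' : U_{i,i'} \text{ is useful}\}$ and $J = \{j' : U_{j,j'} \text{ is useful}\}$; since $V_i$ and $V_j$ are both useful we have $|I|, |J| \geq (1-\epsilon^{1/10})\ell$. For $i' \in I$ and $j' \in J$ write $\varphi(i') = X_{{\cal P}_b}(U_{i,i'})$ and $\psi(j') = X_{{\cal P}_b}(U_{j,j'})$, and define $a, b \in \mathbb{R}^{m_b}$ by $a_X = |\varphi^{-1}(X)|$ and $b_X = |\psi^{-1}(X)|$. Then $\sum_X a_X = |I|$, $\sum_X b_X = |J|$, and the number of pairs $(i',j') \in I \times J$ for which $(\varphi(i'),\psi(j'))$ belongs to the trap is exactly $a^T Q b$, where $Q$ is the adjacency matrix of the trap graph ${\cal O}_b$.

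Next I would rescale so that $a$ and $b$ fit the hypotheses of Lemma \ref{lem:random}. Any useful $U_{i,i'}$ with $\varphi(i') = X$ contributes at least $(1-\epsilon^{1/5})|U_{i,i'}| = (1-\epsilon^{1/5})n/(k\ell)$ vertices to $X$; since distinct $U_{i,i'}$'s are disjoint and $|X| = n/m_b$, this gives $a_X \leq c := k\ell/((1-\epsilon^{1/5})m_b)$, and analogously for $b_X$. Set $\tilde a = a/c$ and $\tilde b = b/c$, so that $\tilde a, \tilde b \in [0,1]^{m_b}$ with coordinate sums $g_1 = |I|/c$ and $g_2 = |J|/c$. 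Because $m_b \geq k^2$ and $\epsilon$ is small, both $g_1$ and $g_2$ exceed $\sqrt{m_b}/2$. A straightforward adaptation of Lemma \ref{lem:random} for unequal sums—write $\tilde a/g_1$ and $\tilde b/g_2$ as convex combinations of vectors in ${\cal V}_{\lceil \sqrt{m_b}/4 \rceil}$ via Lemma \ref{convcomb} and then apply the first trap property termwise—yields $\tfrac{1}{4} g_1 g_2 \leq \tilde a^T Q \tilde b \leq \tfrac{3}{4} g_1 g_2$. Scaling back gives $\tfrac{1}{4} |I||J| \leq a^T Q b \leq \tfrac{3}{4} |I||J|$.

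To finish, take $D_1 = \{(i',j') \in I \times J : (\varphi(i'),\psi(j')) \in E({\cal O}_b)\}$ and $D_2 = (I \times J) \setminus D_1$; both sit inside $[\ell] \times [\ell]$. The bounds above yield $|D_1|, |D_2| \geq \tfrac{1}{4}(1-\epsilon^{1/10})^2 \ell^2 \geq \ell^2/8 \geq \ell^2/32$ for $\epsilon$ small enough, and the conditions on the entries of $D_1$ and $D_2$ match those in the claim. The main obstacle I anticipate is verifying carefully that the rescaled sums $g_1, g_2$ really do exceed $\sqrt{m_b}/2$; this is precisely the step where the hypothesis $|{\cal P}_b| \geq k^2$ is used in an essential way, and it also forces the slight but necessary extension of Lemma \ref{lem:random} to unequal coordinate sums.
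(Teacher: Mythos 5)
Your proposal is correct, and its core mechanism is the same as the paper's: both proofs reduce the claim to the first trap property via Lemma \ref{convcomb}, applied to vectors supported on the clusters of ${\cal P}_b$ whose entries are small enough because of usefulness and because $m_b\geq k^2$. The implementations differ in what those vectors measure. The paper takes $x_u=|V_i'\cap X_u|/|X_u|$ for a sub-union $V_i'$ of $\ell/2$ useful parts (so Lemma \ref{lem:random} applies verbatim with equal sums $g=m/2k$), and then must \emph{decompose} $x^TQy=\sum_{i',j'}(x^{i'})^TQy^{j'}$ and argue that each summand is a near-indicator of trap membership, up to $0.99/0.01$ error factors, before converting the density bound into a pair count. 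You instead let $a_X$ count the useful sub-clusters assigned to $X$, so that $a^TQb$ is \emph{exactly} the number of in-trap pairs and no decomposition or error analysis is needed; the price is that you must supply the entry bound $a_X\leq k\ell/((1-\epsilon^{1/5})m_b)$ (a one-line consequence of disjointness and $\epsilon^{1/5}$-containment) and the unequal-sums version of Lemma \ref{lem:random}, which is immediate since its proof never uses $\sum x_i=\sum y_i$ --- it only needs each of $x/g_1$, $y/g_2$ to satisfy the hypotheses of Lemma \ref{convcomb} with $k=\lceil\sqrt{m_b}/4\rceil$, which your lower bound $g_1,g_2\geq 0.9\,m_b/k\geq\sqrt{m_b}/2$ guarantees. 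Your route is arguably cleaner and yields the slightly stronger constants $|D_1|,|D_2|\geq\ell^2/8$; the paper's route has the advantage of reusing Lemma \ref{lem:random} as stated. One small point worth making explicit in a write-up: pairs $(i',j')$ with $\varphi(i')=\psi(j')$ contribute nothing to $a^TQb$ (the trap graph has no loops) and so land in $D_2$, which is consistent with the claim since such a pair of clusters does not belong to the trap.
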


In the next subsection we prove the lemma assuming Claim \ref{clm:middle}, in the subsection following it we will prove this claim.

\subsection{Proof of Lemma \ref{lem:trapworks} via Claim \ref{clm:middle}}

Let $\alpha$ be the weight added to $H$ by the trap that was placed on ${\cal P}_b$.
Let $D_1,D_2$ be the subsets of $[\ell] \times [\ell]$ guaranteed by Claim \ref{clm:middle}.
Take any pair $(i_1,j_1) \in D_1$ and let $X_1=X_{{\cal P}_b}(U_{i,i_1})$ and $X_2=X_{{\cal P}_b}(U_{j,j_1})$. Since $(i_1,j_1) \in D_1$ we know that the pair $(X_1,X_2)$ was
assigned an extra weight of $\alpha$ by the trap placed on ${\cal P}_b$.
Now consider the traps with weight larger than $\alpha$, that is, the traps
that were placed on partitions ${\cal P'}$ which are refined by ${\cal P}_b$.
Note that $(X_1,X_2)$ might get an extra weight from a subset of these traps\footnote{More precisely, if $X_1$ and $X_2$ are subsets of the same cluster $X' \in {\cal P}'$,
then they will never get an extra weight from the trap placed on ${\cal P}'$. If they belong to different clusters $X'_1,X'_2 \in {\cal P}'$, then they
will receive an extra weight only if $(X'_1,X'_2)$ belong to the trap placed on ${\cal P}'$.}. But since $H$ contains only $\frac{1}{48}\sqrt{\log (1/\epsilon)}$ many traps, the number of ways to choose the subset of the traps
with weight larger than $\alpha$ from which $(X_1,X_2)$ get an extra weight is bounded by
$2^{\frac{1}{48}\sqrt{\log (1/\epsilon)}} \ll \frac{1}{32\epsilon}$. Hence $D_1$ must have a subset
of pairs of size at least $\epsilon \ell^2$,
denoted $D'_1$, and set of weights $W_1$ (all larger than $\alpha$) with the following property; if
$\alpha' > \alpha$ and ${\cal P'}$ is the partition on which the trap with weight $\alpha'$ was placed then
for any $(i_1,j_1) \in D'_1$ the pair $(X_{\cal P'}(U_{i,i_1}),X_{\cal P'}(U_{j,j_1}))$ belongs to the trap on ${\cal P'}$ if and only if $\alpha' \in W_1$.
We can also define $D'_2$ and $W_2$ in the same manner.

We now claim that we can take $C_1$ and $C_2$ (the sets showing that $(V_i,V_j)$ is bad) to be the sets $D'_1$ and $D'_2$. First, as noted above,
both $D'_1$ and $D'_2$ have size at least $\epsilon \ell^2$. So to finish the proof we will have to show that for every $(i_1,j_1) \in D'_1$ and $(i_2,j_2) \in D'_2$ we have
\begin{equation}\label{eqbad}
|d(U_{i,i_1},U_{j,j_1})-d(U_{i,i_2},U_{j,j_2})| \geq 2\epsilon\;.
\end{equation}
Let $\alpha'$ be the largest weight
that belongs to exactly one of the sets $W_1$ and $W_2$.
Assume without loss of generality that $\alpha' \in W_1$ and $\alpha' \not \in W_2$.
If there is no such weight (that is, $W_1=W_2$) then set $\alpha'=\alpha$. We now recall Fact \ref{trapdensity} which tells us that
\begin{equation}\label{eqalpha}
\alpha' \geq 4^{-\frac{1}{48}\sqrt{\log(1/\epsilon)}}\;.
\end{equation}
Let ${\cal P'}$ be the partition on which the trap with weight $\alpha'$ was placed.
Since traps with weight at least $\alpha$ are placed on partitions that are refined by ${\cal P}_b$, we see that if a set $U_{i,i'}$ is useful with respect to ${\cal P}_b$ it must also be useful
with respect to ${\cal P'}$. This means that for each pair $(i_1,j_1) \in D'_1$ the trap at ${\cal P'}$ increases $d(U_{i,i_1},U_{j,j_1})$ by at least
$$
\alpha'\left(1-\epsilon^{1/5}\right)^2 \geq \alpha'(1-2\epsilon^{1/5}) \geq 0.99\alpha'\;.
$$
Similarly, for each pair $(i_2,j_2) \in D'_2$ the trap at ${\cal P'}$ increases $d(U_{i,i_2},U_{j,j_2})$ by at most
$$
2\alpha' \epsilon^{1/5}  \leq 0.01\alpha'\;.
$$
Hence, disregarding for a moment all the other weights that can be assigned
to these sets in $H$, we see that all the pairs in $(i_1,j_1) \in D'_1$ are such that $d(U_{i,i_1},U_{j,j_1}) \geq 0.99\alpha'$ while all
$(i_2,j_2) \in D'_2$ are such that $d(U_{i,i_2},U_{j,j_2}) \leq 0.01\alpha'$. We will now show that this discrepancy is (essentially) maintained even when considering the entire graph $H$.

First, recall that by Fact \ref{ob1} the total weight assigned to any pair of vertices of $H$ in the graph $G$ is bounded by $1/4^{\sqrt{\log(1/\epsilon)}}$.
Hence, recalling (\ref{eqalpha}), we see that even after taking into
account these weights, we have $d(U_{i,i_2},U_{j,j_2}) \leq 0.02\alpha'$ for any $(i_2,j_2) \in D'_2$. Let us now consider the contribution of the weights coming from traps that were assigned a weight
smaller than $\alpha'$. Since these weights are $\alpha'/4,\alpha'/16,...$ their sum is bounded by $\alpha'/3$, so after taking these weights into account
we still have $d(U_{i,i_2},U_{j,j_2}) \leq 0.36\alpha'$ for any $(i_2,j_2) \in D'_2$. Let us now consider the contribution coming
from traps with weight more than $\alpha'$.
Consider any trap with weight $\alpha'' > \alpha'$ that was placed on a partition ${\cal P''}$.
Recall that by definition of $W_1$, $W_2$ and by our choice of $\alpha'$, either the extra weight $\alpha''$ was added
to all pairs $(X_{\cal P''}(U_{i,i'}),X_{\cal P''}(U_{j,j'}))$ with $(i',j') \in D'_1 \cup D'_2$ or to none of them.
Since all the sets $U_{i,i_1}$ and $U_{j,j_1}$ are useful we see that for each pair $(i_1,j_1) \in D'_1$ the pair
$(U_{i,i_1},U_{j,j_1})$ gets from the trap at ${\cal P''}$ a total weight at least
$$
\alpha''\left(1-\epsilon^{1/5}\right)^2 \geq \alpha''(1-2\epsilon^{1/5})\;.
$$
Set $w$ to be the sum of the weights in $W_1$ that are larger than $\alpha'$. Then the above discussion implies that for each $(i_1,j_1) \in D'_1$ we have
\begin{equation}\label{eqdisc1}
d(U_{i,i_1},U_{j,j_1}) \geq (1-2\epsilon^{1/5})w+0.99\alpha' \geq w+0.99\alpha'-2\epsilon^{1/5}\;.
\end{equation}
Consider now a pair $(i_2,j_2) \in D'_2$; If a weight $\alpha'' \geq \alpha'$ belongs to $W_2$ then it can contribute to $d(U_{i,i_2},U_{j,j_2})$ a weight of at most $\alpha''$, hence
such weights contribute to $d(U_{i,i_2},U_{j,j_2})$ a total weight of at most\footnote{Recall that by choice of $\alpha'$ the sets $W_1$ and $W_2$ contain the same weights larger than $\alpha'$.} $w$.
As to weights $\alpha'' > \alpha'$ that do not belong to $W_2$, we see that since $U_{i,i_2}$ and $U_{j,j_2}$ are useful, they can increase $d(U_{i,i_2},U_{j,j_2})$ by at most
$2\alpha''\epsilon^{1/5}$. As the total sum of weights of all traps is at most 1, this extra contribution is bounded by $2\epsilon^{1/5}$.
All together, we see that for every $(i_2,j_2) \in D'_2$,
\begin{equation}\label{eqdisc2}
d(U_{i,i_2},U_{j,j_2}) \leq w+ 0.36 \alpha' + 2\epsilon^{1/5}.
\end{equation}
Recalling (\ref{eqalpha}), we see that $4\epsilon^{1/5} < 0.1\alpha'$. Hence, (\ref{eqdisc1}) and (\ref{eqdisc2}) imply that
$$
d(U_{i,i_1},U_{j,j_1}) - d(U_{i,i_2},U_{j,j_2}) > 0.5 \alpha' >_{(\ref{eqalpha})} 2\epsilon
$$
for every choice of $(i_1,j_1) \in D'_1$ and $(i_2,j_2) \in D'_2$. This establishes (\ref{eqbad}), thus completing the proof.

\subsection{Proof of Claim \ref{clm:middle}}

Let us start with observing that since $V_i$ is assumed to be useful, it contains (more than) $\frac12\ell$ useful sets $U_{i,i'}$.
Let $V'_i$ be the union of $\frac12\ell$ such sets, and define $V'_j$ is a similar way.
From now on we will focus on $V'_i$ and $V'_j$ and their subsets $U_{i,i'}$ and $U_{j,j'}$ so we will only be talking about
sets $U_{i,i'}$ and $U_{j,j'}$ that are useful.
Recall that for any useful set $U_{i,i'}$ there is a (unique) set $X_{{\cal P}_b}(U_{i,i'}) \in {\cal P}_b$ such that $U_{i,i'} \subset_{\epsilon^{1/5}} X_{{\cal P}_b}(U_{i,i'})$.

Suppose ${\cal P}_b$ has $m$ clusters and recall that we defined the trap on ${\cal P}_b$ using an $m$-vertex graph ${\cal O}$ satisfying the first condition of Definition \ref{def:trap}.
That is $(u,v)$ is an edge of ${\cal O}$ if and only if $(X_u,X_v)$ belongs to the trap on ${\cal P}_b$.
Define a vector $x \in [0,1]^{m}$ by setting $x_u=|V'_i \cap X_u|/|X_u|$. Define $y \in [0,1]^m$ similarly by setting
$y_u=|V'_j \cap X_u|/|X_u|$. Recall that each of the sets $V_i$ contains a $1/k$-fraction of the vertices $H$ (since $|{\cal A}|=k$)
so $|V'_i|$ contains a $1/2k$-fraction of the vertices of $H$. Since ${\cal P}_b$ has order $m$ (so there are $m$ sets $X_u$)
and we assume that $m \geq k^2$ (in the second item of Lemma \ref{lem:trapworks}) we infer that
\begin{equation}\label{eqg}
\sum_ux_u=\sum_uy_u=\frac{m}{2k} \geq \sqrt{m}/2 \;.
\end{equation}
If we take $Q$ to be the adjacency matrix of ${\cal O}$, then by (\ref{eqg}) we can apply Lemma \ref{lem:random} (with $g=m/2k$) to infer that
\begin{equation}\label{eq4}
\frac14(m/2k)^2 \leq x^T Q y \leq \frac34(m/2k)^2\;.
\end{equation}
Given a set $U_{i,i'}$ we define a vector $x^{i'}$ by setting $x^{i'}_u=|U_{i,i'}\cap X_u|/|X_u|$. Similarly given
a set $U_{j,j'}$ we define a vector $y^{j'}$ by setting $y^{j'}_u=|U_{j,j'}\cap X_u|/|X_u|$. Observe that since
$V'_i$ is the union of the sets $U_{i,i'}$ we have $x=\sum_{i'}x^{i'}$ where
the sum ranges over all the $\ell/2$ indices $i'$ for which $U_{i,i'} \subseteq V'_i$. Similarly $y=\sum_{j'}y^{j'}$ where
the sum ranges over all the $\ell/2$ indices $j'$ for which $U_{j,j'} \subseteq V'_j$. Hence, we get from (\ref{eq4}) that
\begin{equation}\label{eq5}
\frac14(m/2k)^2 \leq \sum_{i',j'} (x^{i'})^TQy^{j'} \leq \frac34(m/2k)^2\;.
\end{equation}

Consider now any pair $i',j'$ in the above sum. Let $X_{u'}=X_{{\cal P}_b}(U_{i,i'})$ and $X_{v'}=X_{{\cal P}_b}(U_{j,j'})$. Recall that $U_{i,i'}$ contains a $1/k\ell$ fraction of $V(H)$ while the sets $X_u$ contains a $1/m$ fraction of $V(H)$.
This means that
$$
\sum_ux^{i'}_u=m/k\ell\;,
$$
and similarly we have
$$
\sum_uy^{j'}_u=m/k\ell\;.
$$
Hence
\begin{equation}\label{eq6}
0 \leq (x^{i'})^TQy^{j'} \leq m^2/k^2\ell^2\;.
\end{equation}
More importantly, since $|U_{i,i'} \cap X_{u'}| \geq \left(1-\epsilon^{1/5}\right)|U_{i,i'}|$ we have
\begin{equation}\label{eq7}
x^{i'}_{u'}=|U_{i,i'} \cap X_{u'}|/|X_{u'}| \geq \left(1-\epsilon^{1/5}\right)m/k\ell\;,
\end{equation}
and since $|U_{j,j'} \cap X_{v'}| \geq \left(1-\epsilon^{1/5}\right)|U_{j,j'}|$ we have
\begin{equation}\label{eq8}
y^{j'}_{v'}=|U_{j,j'} \cap X_{v'} |/|X_{v'}| \geq \left(1-\epsilon^{1/5}\right)m/k\ell\;.
\end{equation}

Suppose now that $(X_{u'},X_{v'})$ belong to the trap placed on ${\cal P}_b$, that is, that $Q_{u',v'}=1$.
We then get from (\ref{eq6}), (\ref{eq7}) and (\ref{eq8}) that
\begin{equation}\label{eq9}
0.99m^2/k^2\ell^2 \leq \left(1-\epsilon^{1/5}\right)^2 m^2/k^2\ell^2 \leq (x^{i'})^TQy^{j'} \leq m^2/k^2\ell^2\;.
\end{equation}
Suppose now that $(X_{u'},X_{v'})$ does not belong to the trap placed on ${\cal P}_b$, that is, that $Q_{u',v'}=0$.
We then get from (\ref{eq6}), (\ref{eq7}) and (\ref{eq8}) that
\begin{equation}\label{eq10}
0 \leq (x^{i'})^TQy^{j'} \leq 2\epsilon^{1/5}m^2/k^2\ell^2 \leq 0.01 m^2/k^2\ell^2\;.
\end{equation}
We thus see from (\ref{eq10}) that the total to contribution to (\ref{eq5}) of pairs $(i',j')$ for which $(X_{u'},X_{v'})$ does not belong to the trap is bounded
by $(\ell/2)^2 \cdot 0.01 m^2/k^2\ell^2 = 0.01(m/2k)^2$. Combining (\ref{eq5}), (\ref{eq9}) and (\ref{eq10}) it thus must be the case that there
are at least
$$
\frac{\frac14(m/2k)^2-0.01(m/2k)^2}{m^2/k^2\ell^2} \geq \frac{1}{32}\ell^2\;,
$$
pairs $(i',j')$ for which $(X_{u'},X_{v'})$ belongs to the trap placed on ${\cal P}_b$. Hence we can take $D_1$ to be the collection of these pairs.
Finally, we see from (\ref{eq5}), (\ref{eq9}) and (\ref{eq10}) that the number of pairs $(i',j')$ for which $(X_{u'},X_{v'})$ belongs to the trap on ${\cal P}_b$
cannot be larger than
$$
\frac{\frac34(m/2k)^2}{0.99m^2/k^2\ell^2} \leq \frac{31}{32}\ell^2\;,
$$
so we can take $D_2$ to be the collection of pairs $(i',j')$ that do not belong to $D_1$.
We thus complete the proof of Claim \ref{clm:middle}.

\section{Proof of Lemma \ref{lem:gowtrap}}\label{sec:lem1}

We will prove Lemma \ref{lem:gowtrap} by first performing a series of reductions that will culminate
in Lemma \ref{clm:key}. We will then spend most of this section proving Lemma \ref{clm:key}.
Let us first derive Lemma \ref{lem:gowtrap} from the following lemma:

\begin{lemma}\label{lem:maintrap}
Suppose $\gamma \leq \epsilon$ and ${\cal Z}=\{Z_1, \ldots, Z_k\}$ is a $\gamma$-regular partition of $H$.
Assume
\begin{itemize}
\item $r < \frac{\log (1/\gamma)}{10\sqrt{\log(1/\epsilon)}}$
\item $\gamma^{1/4} \leq \beta \leq 1/100$
\end{itemize}
Then, if ${\cal Z}$ is a $\beta$-refinement of ${\cal P}_{r-1}$ it is also an $8\beta$-refinement of ${\cal P}_{r}$.
\end{lemma}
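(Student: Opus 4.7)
I would argue by contradiction, supposing that $\mathcal{Z}$ is a $\beta$-refinement of $\mathcal{P}_{r-1}$ but not an $8\beta$-refinement of $\mathcal{P}_r$. Then the set $\mathcal{B}$ of clusters $Z_t$ which are $\beta$-contained in some $X_{i_t} \in \mathcal{P}_{r-1}$ but fail to be $8\beta$-contained in any sub-cluster $X_{i_t, s} \in \mathcal{P}_r$ has size at least $7\beta k$ by elementary counting. For each $Z_t \in \mathcal{B}$, no single sub-cluster captures more than a $(1-7\beta)$-fraction of $Z_t \cap X_{i_t}$, so one can partition the index set $[M]$ of sub-clusters in $X_{i_t}$ into $S_t^+$ and $S_t^-$ such that the corresponding pieces $Z_t^{\pm} = Z_t \cap \bigcup_{s \in S_t^{\pm}} X_{i_t, s}$ each carry mass at least $3\beta|Z_t| \geq \gamma|Z_t|$ (using $\beta \geq \gamma^{1/4}$). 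The plan is to show that for a constant fraction of pairs $(Z_t, Z_{t'})$ with $Z_t \in \mathcal{B}$, the subsets $Z_t^{\pm}$ witness a $\gamma$-irregularity; this produces $\Omega(\beta k^2) \gg \gamma k^2$ irregular pairs, contradicting the $\gamma$-regularity of $\mathcal{Z}$.

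For each candidate pair $(Z_t, Z_{t'})$ with $Z_{t'} \subset_{\beta} X_{j_{t'}} \in \mathcal{P}_{r-1}$, I would decompose the difference $\Delta_{t, t'} := d(Z_t^+, Z_{t'}) - d(Z_t^-, Z_{t'})$ by the source of each edge weight in $H$. The contributions from levels $r' < r$ of $G$ and from traps on $\mathcal{P}_b$ with $b \leq r-1$ vanish, because $Z_t^+$ and $Z_t^-$ lie in a common cluster of every such coarser partition. The contribution from levels $r' > r$ of $G$ is bounded in absolute value by the geometric-series tail $\tfrac{1}{3} \cdot 4^{-r}/4^{\sqrt{\log(1/\epsilon)}}$. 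The level-$r$ contribution equals $4^{-r}/4^{\sqrt{\log(1/\epsilon)}}$ times $(\alpha_{j_{t'}}^+ - \alpha_{j_{t'}}^-)(2\gamma_A - 1)$, where $\alpha_j^{\pm}$ denotes the fraction of $Z_t^{\pm}$ lying in $A_{i_t, j}$ and $\gamma_A$ the fraction of $Z_{t'}$ lying in $A_{j_{t'}, i_t}$; this is the signal we are after.

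The delicate term is the contribution from traps placed on $\mathcal{P}_b$ with $b \geq r$, whose raw weight $4^{-g}$ can greatly exceed the level-$r$ signal. To bound it I invoke Lemma \ref{lem:random3}: taking $x$ to encode the distribution of $Z_t^{\pm}$ across clusters of $\mathcal{P}_b$ and $y$ the distribution of $Z_{t'}$, the quasi-randomness of the trap graph gives $|x^T Q y - \tfrac{1}{2} g_1 g_2| \leq 2\delta^2 g_1 g_2$, so the trap's contribution to $d(Z_t^+, Z_{t'})$ and $d(Z_t^-, Z_{t'})$ agree up to an error of order $\delta^2 \cdot 4^{-g}$. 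The lemma's spread hypothesis on $x$ holds because $Z_t^{\pm}$ cannot concentrate too much in any one $\mathcal{P}_b$ cluster once $m_b$ is sufficiently larger than $k$, which is true for every $b \geq r$ carrying a trap by construction of the sequence $w(g)$; the mass hypothesis on $y$ holds via $Z_{t'} \subset_{\beta} X_{j_{t'}}$. Summing the errors over the at most $\tfrac{1}{48}\sqrt{\log(1/\epsilon)}$ relevant traps gives total trap noise of order $\delta^2$, which for a suitable $\delta$ lies below the level-$r$ signal $4^{-r}/4^{\sqrt{\log(1/\epsilon)}}$; this is precisely where the hypothesis $r < \log(1/\gamma)/(10\sqrt{\log(1/\epsilon)})$ is used, as it ensures that after subtracting all noise, $|\Delta_{t,t'}|$ exceeds $\gamma$ comfortably whenever the signal is active.

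Finally, to convert a single-pair signal into many irregular pairs I use the balanced property of the level-$r$ family $(A'_{i_t, j}, B'_{i_t, j})_{j=1}^m$ from Claim \ref{lem:balanced}. Since $S_t^+$ and $S_t^-$ are disjoint subsets of $[M]$ of non-trivial mass, pairwise separation of the balanced partitions forces $|\alpha_j^+ - \alpha_j^-|$ to be bounded below by a constant for a constant fraction of $j \in [m]$. For each such $j$, clusters $Z_{t'} \subset_{\beta} X_j$ that are $8\beta$-contained in a single $\mathcal{P}_r$ sub-cluster of $X_j$ automatically have $|2\gamma_A - 1|$ close to $1$, while the $Z_{t'} \in \mathcal{B}$ can be handled by a symmetric splitting argument. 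The main obstacle is the ``packaging'' step foreshadowed in the introduction: a single $Z_{t'}$ may be too small to meet the mass hypothesis of Lemma \ref{lem:random3} at coarser trap levels, so one must first aggregate all $Z_{t'} \subset_{\beta} X_j$ into one set $Y_j$, apply the trap's quasi-randomness to the aggregate, and only then disentangle individual density contributions. This aggregation is the role of Lemmas \ref{clm:oneset} and \ref{clm:key}, and carrying it out without losing the signal is the main technical difficulty of the proof.
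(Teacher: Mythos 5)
Your high-level plan matches the paper's in outline (split a badly-refined $Z_t$ into two pieces, exhibit a level-$r$ density signal against many other clusters, and use the traps' quasi-randomness to show the signal survives), but two of the key steps fail as described.

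First, the signal extraction is broken. You fix a single splitting $Z_t = Z_t^+ \cup Z_t^-$ along disjoint sets $S_t^\pm \subseteq [M]$ and then assert that the balanced property forces $|\alpha_j^+ - \alpha_j^-|$ to be bounded below for a constant fraction of $j$. The balanced property (and Lemma \ref{lem:balvector}) only guarantees that a \emph{single} spread-out distribution over $[M]$ is split non-trivially by many of the partitions $(A'_{i,j},B'_{i,j})$; it says nothing about separating two distributions with disjoint supports. If, say, $S_t^+$ and $S_t^-$ are both large and $Z_t$ is roughly uniform on each, then $\alpha_j^+$ and $\alpha_j^-$ are both close to $1/2$ for almost every $j$ and their difference is $O(M^{-1/2})$, not $\Omega(1)$. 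The paper avoids this by \emph{not} fixing the splitting in advance: for each $j$ it uses the splitting $Z_t=(Z_t\cap A_{i,j})\cup(Z_t\cap B_{i,j})$ induced by the construction of $G$, and invokes Lemma \ref{lem:balvector} only to show that for at least $6\beta m$ values of $j$ both pieces have mass $\geq \beta^2|Z_t|$ (the ``$\beta$-helpful'' condition in Lemma \ref{clm:oneset}). With that choice the level-$r$ signal between the two pieces and $W_u=Z_u\cap A_{j,i}$ is automatically the full weight $\delta$, with no need for your quantity $|2\gamma_A-1|$ to be large.

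Second, the per-pair application of Lemma \ref{lem:random3} does not go through, and the logical structure has to change as a result. The spread hypothesis (item 3) can simply fail: a trap may sit on a partition ${\cal P}_b$ whose clusters are much larger than $Z_t$ (nothing in Lemma \ref{lem:maintrap} relates $m_b$ to $k$), so $Z_t^\pm$ can be entirely contained in one cluster of ${\cal P}_b$; in that case the trap genuinely \emph{can} wipe out the discrepancy between $d(Z_t^+,Z_{t'})$ and $d(Z_t^-,Z_{t'})$, which is why the paper must iteratively pass to subsets $A',B'$ that are, at each trap level, either wholly inside one cluster or $\delta^6$-spread, and must handle the concentrated case by a completely separate argument (Claims \ref{clm:k2}--\ref{clm:k4}, which produce new witnesses $W',W''$ inside $W_u$). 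Likewise the mass hypothesis (item 4) fails for a single $Z_{t'}$ unless $k$ is small compared to $m/\delta$, which is the reason the paper runs the argument by contradiction: it assumes $k/4m$ of the pairs are $\gamma$-regular, shows each such pair forces a one-sided trap discrepancy $d_{\ell_u}(A',W_u)>\alpha_{\ell_u}/2+2\delta^2$, pigeonholes on $\ell_u$, and only then sums the vectors $x^u$ over $u\in S$ to meet the hypotheses of Lemma \ref{lem:random3} and reach a contradiction. You gesture at this aggregation in your last sentence, but your main argument asserts the per-pair noise bound directly, which is exactly the statement that is false for individual pairs and which the whole machinery of Section \ref{sec:lem1} exists to circumvent.
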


\begin{proof}[Proof that Lemma \ref{lem:maintrap} implies Lemma \ref{lem:gowtrap}]
By the definition of $(\epsilon,f)$-regularity, we get that if $|{\cal A}|=k$ then ${\cal B}$ must be $\frac{1}{k}$-regular. Since
$k \geq 1/\epsilon$ we have $1/k \leq \epsilon$. Since ${\cal B}$ is a refinement of
${\cal P}_0$ (recall that ${\cal P}_0$ is just the entire vertex set of $H$), it is in particular a $(1/k)^{1/4}$-refinement of ${\cal P}_0$.
Hence, starting with $\beta=(1/k)^{1/4}$ we can repeatedly apply Lemma \ref{lem:maintrap} (with $\gamma=1/k$) as long as
\begin{equation}\label{eqp1}
r \leq \frac{\sqrt{\log(k)}}{10} \leq \frac{\log k}{10\sqrt{\log(1/\epsilon)}}
\end{equation}
and
\begin{equation}\label{eqp2}
8^r/k^{1/4} \leq 1/100\;.
\end{equation}
Taking $r=2\log\log(k)$, we thus make sure that both (\ref{eqp1}) and (\ref{eqp2}) hold\footnote{Recall that $k \geq 1/\epsilon$. Since Theorem \ref{thm:main} allows us to assume that $\epsilon$ is sufficiently small, we can assume that $k$ is large enough so that $2\log\log k < \frac{\sqrt{\log(k)}}{10}$ and that $8^{2\log\log k}/k^{1/4} \leq 1/100$.} with a lot of room to spare.
Hence, after these $r=2\log\log k$ applications of Lemma \ref{lem:maintrap} we get that ${\cal B}$ must be an $8^{2\log\log k}/k^{1/4}$-refinement of ${\cal P}_{2\log\log k}$.
Since
$$
8^{2\log\log k}/k^{1/4} \leq 1/k^{1/5} \leq \epsilon^{1/5}\;,
$$
we get that ${\cal B}$ is indeed an $\epsilon^{1/5}$-refinement of ${\cal P}_{2\log\log k}$.
\end{proof}

Let us now continue with the proof of Lemma \ref{lem:maintrap}. So throughout the rest of this section we assume all the conditions
that are stated in the lemma. Suppose ${\cal P}_{r-1}=\{X_i: 1 \leq i \leq m \}$ and ${\cal P}_{r}=\{X_{i,i'}: 1 \leq i \leq m, 1 \leq i' \leq M\}$.
Recall the sets $A_{i,j}, B_{i,j}$ that were used in the construction of the graph $G$ in Subsection \ref{subsec:gowconstruct}.
With respect to these, we make the following definition:
\begin{definition} A pair of sets $(Z_t, Z_u)$ is said to be {\em $\beta$-helpful} if
\begin{enumerate}
\item There are\footnote{Note that since $\beta < 1/2$ there is (at most) one choice of $X_i$ and $X_j$ such that $Z_t \subset_{\beta} X_i$ and $Z_u \subset_{\beta} X_j$.}
 $1 \leq i , j \leq m$ such that $Z_t \subset_{\beta} X_i$ and $Z_u \subset_{\beta} X_j$ (we are {\em not} requiring $i \neq j$).
\item We have $\min(|Z_t \cap A_{i,j}|, |Z_t \cap B_{i,j}|) \geq \beta^2 |Z_t|$.
\end{enumerate}
\end{definition}

We will need the following lemma, restated from \cite{Gowers}.

\begin{lemma}{\bf (\cite{Gowers})}\label{lem:balvector}
Let $M$  be an integer and let $(A_j, B_j)_{j=1}^m$ be a sequence of balanced partitions of $[M]$.
Let $0 < \zeta \leq 1/2$ and let $\eta, \xi >0$ be such that
\begin{equation}\label{Glemma}
(1-\eta)(1-4\xi) > 1 - \zeta + \zeta^2\;.
\end{equation}
Then for every sequence $ \lambda = (\lambda_1, \ldots, \lambda_M)$ such that $ \lambda_{i'} \geq 0$ for every $i'$,
$\lVert \lambda \rVert_1 = 1$ and $\lVert \lambda \rVert_{\infty}  < 1 - \zeta$, there are at least $\eta m$ values of $j$
for which $\min (\sum_{i' \in A_j} \lambda_{i'}, \sum_{i' \in B_j} \lambda_{i'}) > \xi$.
\end{lemma}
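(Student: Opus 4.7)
The plan is a straightforward second-moment calculation. For each $j$, define $a_j = \sum_{i' \in A_j} \lambda_{i'}$ and $b_j = \sum_{i' \in B_j} \lambda_{i'}$, so that $a_j+b_j=1$. Call an index $j$ \emph{bad} if $\min(a_j,b_j) \leq \xi$, and assume toward a contradiction that strictly more than $(1-\eta)m$ indices are bad. I will bound $\sum_j a_jb_j$ from above using this assumption and from below using balancedness plus the $\ell_\infty$ control on $\lambda$, and then combine the two to contradict (\ref{Glemma}).

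The upper bound is easy: a bad $j$ contributes $a_jb_j \leq \xi\cdot 1 = \xi$, while a good $j$ contributes at most $1/4$ by AM--GM on $a_j+b_j=1$. Hence $\sum_j a_jb_j \leq (1-\eta)\xi\, m + \eta m/4$. The lower bound is where the balanced partitions are used. From the identity $2a_jb_j = 1 - (a_j^2+b_j^2)$ it suffices to upper bound $\sum_j(a_j^2+b_j^2)$. Opening the squares gives
$\sum_j (a_j^2+b_j^2) = \sum_{i',i''} \lambda_{i'}\lambda_{i''}\, N(i',i'')$,
where $N(i',i'')$ is the number of $j$ with $i'$ and $i''$ on the same side of the partition $(A_j,B_j)$. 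By Definition \ref{def:balanced}, $N(i',i'') \leq 3m/4$ whenever $i' \neq i''$, while $N(i',i')=m$; after separating the diagonal this yields $\sum_j(a_j^2+b_j^2) \leq 3m/4 + (m/4)\lVert\lambda\rVert_2^2$.

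The final ingredient is a sharp bound on $\lVert\lambda\rVert_2^2$ under the constraints $\lambda \geq 0$, $\lVert\lambda\rVert_1 = 1$, $\lVert\lambda\rVert_\infty < 1-\zeta$. A one-line extremal argument (concentrate as much mass as possible on a single coordinate at the forbidden threshold and drop the remainder on a second coordinate) shows that $\lVert\lambda\rVert_2^2 < (1-\zeta)^2 + \zeta^2 = 1 - 2\zeta + 2\zeta^2$. Plugging back yields $\sum_j a_jb_j > m(\zeta-\zeta^2)/4$, and comparing with the upper bound forces $4(1-\eta)\xi + \eta > \zeta - \zeta^2$, which rearranges to $(1-\eta)(1-4\xi) < 1-\zeta+\zeta^2$, directly contradicting (\ref{Glemma}).

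The only step that actually requires thought is the tight extremal estimate on $\lVert\lambda\rVert_2^2$: the obvious bound $\lVert\lambda\rVert_2^2 \leq \lVert\lambda\rVert_\infty\lVert\lambda\rVert_1 < 1-\zeta$ loses the crucial $\zeta^2$ correction and would only produce a conclusion of the shape $(1-\eta)(1-4\xi) < 1-\zeta/2$, too weak to match (\ref{Glemma}). Recognizing that the quadratic refinement is precisely what is needed to recover the $+\zeta^2$ on the right-hand side of the hypothesis is the whole content of the proof; everything else is bookkeeping.
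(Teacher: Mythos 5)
Your proof is correct. The paper itself gives no proof of this lemma (it is restated from Gowers and cited without proof), and your second-moment argument --- expanding $\sum_j(a_j^2+b_j^2)$ into $\sum_{p,q}\lambda_p\lambda_q N(p,q)$, invoking balancedness on the off-diagonal terms, and using the extremal bound $\lVert\lambda\rVert_2^2<(1-\zeta)^2+\zeta^2$ --- is essentially the standard argument from Gowers' original paper. All the computations check out, including the crucial quadratic refinement of the $\ell_2$ bound and the observation (implicit in your write-up, and forced by (\ref{Glemma}) since $1-\zeta+\zeta^2>0$) that $\xi<1/4$, which is what makes the bad/good accounting in the upper bound go the right way.
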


\begin{lemma}\label{clm:oneset} Suppose ${\cal Z}$ is a $\beta$-refinement of ${\cal P}_{r-1}$.
Then, if $Z_t \subset_{\beta} X_i$ for some $i$, but there is no $i'$ for which $Z_t \subset_{8 \beta}X_{i,i'}$,
then there are at least
$2 \beta m$ sets $X_j$ such that
each of these sets $X_j$ $\beta$-contains at least $\frac{k}{2m}$ sets $Z_u$
such that $(Z_t, Z_u)$ are $\beta$-helpful.
\end{lemma}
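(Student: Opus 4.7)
The plan is to reduce the statement to a direct application of Lemma \ref{lem:balvector} for a carefully chosen density vector, combined with a double-counting argument that controls how many sets $X_j$ can fail to $\beta$-contain at least $k/(2m)$ of the sets $Z_u$. Write $n=|V(H)|$ so that $|X_i|=n/m$ and $|Z_t|=n/k$ for every $i,t$. Fix $Z_t\subset_\beta X_i$ and define a vector $\lambda\in\R^M$ by $\lambda_{i'}=|Z_t\cap X_{i,i'}|/|Z_t\cap X_i|$. Since the $X_{i,i'}$ partition $X_i$, we have $\lambda_{i'}\geq 0$ and $\|\lambda\|_1=1$. The hypothesis that no $i'$ satisfies $Z_t\subset_{8\beta}X_{i,i'}$ gives $|Z_t\cap X_{i,i'}|<(1-8\beta)|Z_t|$ for every $i'$, and combining with $|Z_t\cap X_i|\geq (1-\beta)|Z_t|$ yields
\[
\|\lambda\|_\infty < \frac{(1-8\beta)|Z_t|}{(1-\beta)|Z_t|} = 1-\frac{7\beta}{1-\beta} < 1-7\beta.
\]

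Now I apply Lemma \ref{lem:balvector} with the balanced sequence $(A'_{i,j},B'_{i,j})_{j=1}^m$ of partitions of $[M]$ that was used in constructing $G$, and with the parameters $\zeta=7\beta$ and $\xi=2\beta^2$. For $\beta\leq 1/100$ one checks (by expanding both sides to first order in $\beta$) that $\eta=6\beta$ satisfies $(1-\eta)(1-4\xi)>1-\zeta+\zeta^2$, so the lemma produces at least $6\beta m$ indices $j\in[m]$ for which
\[
\min\!\Bigl(\sum_{i'\in A'_{i,j}}\lambda_{i'},\;\sum_{i'\in B'_{i,j}}\lambda_{i'}\Bigr)>2\beta^2.
\]
Translating back via $A_{i,j}=\bigcup_{i'\in A'_{i,j}}X_{i,i'}$ and $|Z_t\cap X_i|\geq (1-\beta)|Z_t|$, this yields $\min(|Z_t\cap A_{i,j}|,|Z_t\cap B_{i,j}|)>2\beta^2(1-\beta)|Z_t|\geq\beta^2|Z_t|$, which is exactly condition~(2) in the definition of being $\beta$-helpful, relative to this particular $j$. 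Call $J\subseteq[m]$ the set of such $j$; then $|J|\geq 6\beta m$.

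It remains to show that at most $4\beta m$ values of $j$ can fail to $\beta$-contain $k/(2m)$ of the $Z_u$'s, so that most $j\in J$ actually give a useful $X_j$. Let $g_j$ denote the number of sets $Z_u$ with $Z_u\subset_\beta X_j$. Since each such $Z_u$ contributes at least $(1-\beta)n/k$ vertices to $X_j$ and $|X_j|=n/m$, we get $g_j\leq k/(m(1-\beta))$; summing the $\beta$-refinement hypothesis gives $\sum_j g_j\geq (1-\beta)k$. A short arithmetic manipulation of
\[
(1-\beta)k \leq |S|\cdot\frac{k}{m(1-\beta)}+(m-|S|)\cdot\frac{k}{2m},
\]
where $S=\{j:g_j\geq k/(2m)\}$, yields $|S|\geq (1-4\beta)m$. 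Intersecting $J$ with $S$ leaves at least $6\beta m-4\beta m=2\beta m$ values of $j$, and for each such $j$ every one of the $\geq k/(2m)$ sets $Z_u\subset_\beta X_j$ yields a $\beta$-helpful pair $(Z_t,Z_u)$, completing the proof.

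The main obstacle I anticipate is the parameter bookkeeping in the application of Lemma \ref{lem:balvector}: one has to match the $(1-8\beta)$-contain hypothesis on $Z_t$ against the $\|\lambda\|_\infty<1-\zeta$ assumption of that lemma, and then simultaneously arrange $\xi\approx\beta^2$ to be large enough to recover condition~(2) and $\eta\approx 6\beta$ to be large enough to survive the $4\beta m$ loss from the counting step. Everything else is essentially a routine counting/partitioning argument.
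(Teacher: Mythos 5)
Your proof is correct and follows essentially the same route as the paper: normalize the distribution of $Z_t$ over the clusters $X_{i,i'}$, apply Lemma \ref{lem:balvector} with $\eta=6\beta$, $\xi=2\beta^2$ to get $6\beta m$ indices $j$ satisfying the helpfulness condition, and then subtract the at most $4\beta m$ clusters $X_j$ that $\beta$-contain fewer than $k/2m$ sets $Z_u$. The only (immaterial) difference is in that last counting step, where you average $\sum_j g_j$ directly while the paper counts ``bad'' vertices; both yield the same $4\beta m$ bound.
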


\begin{proof} Let $Z_t \subset_{\beta} X_i$ and suppose that there is no $1 \leq i' \leq M$ for which $Z_t \subset_{8 \beta} X_{i,i'}$.
Write $\lambda_{i'}$ for $|Z_t \cap X_{i,i'}|/|Z_t|$. Then $\lambda_{i'} \geq 0$ for all $i'$, $\lVert \lambda \rVert_1 \geq 1 - \beta$
(since $Z_t \subset_{\beta} X_i$) and $\lVert \lambda \rVert_{\infty} \leq 1 - 8 \beta$ (since we assume that there is no $i'$ for which $Z_t \subset_{8\beta}
X_{i,i'}$). Set $\zeta = 7\beta/(1 - \beta) < 1/2$ and note that
we have
\begin{equation}\label{Glemma1}
(1-6\beta)(1-8\beta^2) > 1-6\beta-8\beta^2 > 1-\zeta+\zeta^2\;,
\end{equation}
where in the second inequality we use the fact that $\beta < 1/100$.
Define the vector $\lambda' = \lambda/\lVert \lambda \rVert_1$. Then $\lVert \lambda' \rVert_1 = 1$ and
\begin{equation}\label{Glemma2}
\lVert \lambda' \rVert_{\infty} \leq (1 - 8 \beta)/\lVert \lambda \rVert_{1} \leq  (1 - 8 \beta)/(1- \beta)=1 - \zeta\;.
\end{equation}

Since $(A'_{i,j}, B'_{i,j})^m_{j=1}$ are balanced partitions of $[M]$, we can apply Lemma \ref{lem:balvector}
to the vector $\lambda'$ (with $\eta = 6 \beta$ and $\xi=2\beta^2$), and conclude that
there are at least $6 \beta m$ values of $j$, for which $\min(\sum_{i' \in A'_{i,j}} \lambda'_{i'}, \sum_{i' \in B'_{i,j}} \lambda'_{i'}) > 2 \beta^2$.
Recalling that $\lambda' = \lambda/\lVert \lambda \rVert_1$ and that $\lVert \lambda \rVert_{1} \geq 1 - \beta$ this means that for each such $j$ we have
$\min(\sum_{i' \in A'_{i,j}} \lambda_{i'}, \sum_{i' \in B'_{i,j}} \lambda_{i'}) > 2 \beta^2 (1- \beta) >  \beta^2$.
Notice that by the construction of the sets $A_{i,j}, B_{i,j}$,
(that is $A_{i,j} = \cup_{i' \in A'_{i,j}} X_{i,i'}$ and $B_{i,j} = \cup_{i' \in B'_{i,j}} X_{i,i'}$)
and by the definition of $\lambda$, these $j$'s satisfy
\begin{equation}\label{Glemma3}
\min(|Z_t \cap A_{i,j}|, |Z_t \cap B_{i,j}|) \geq \beta^2 |Z_t|\;,
\end{equation}
that is, they satisfy the second condition of being $\beta$-helpful. This means that if a set $Z_u$ is $\beta$-contained in $X_j$ then
$(Z_t,Z_u)$ is $\beta$-helpful. So to finish the proof, we need to show that out of the $6 \beta m$ values of $j$ that satisfy (\ref{Glemma3}), at least
$2\beta m$ are such that $X_j$ $\beta$-contains at least $k/2m$ sets $Z_u$.
Hence, it is enough to show that ${\cal P}_{r-1}$ has at most $4\beta m$ sets $X$ that $\beta$-contain less than $k/2m$ sets $Z \in {\cal Z}$.

Call a vertex $v \in V(H)$ {\em bad} if it either belongs to a set $Z \in {\cal Z}$ that is
not $\beta$-contained in any $X \in {\cal P}_{r-1}$ or if it belongs
to $Z \setminus X$ where $Z \subset_{\beta} X$.
Note that since we assume that ${\cal Z}$ is a $\beta$-refinement of ${\cal P}_{r-1}$ then the fraction of $H$'s vertices that are
bad is bounded by $2\beta$. Suppose now that there are more than $4\beta m $ sets $X$ that $\beta$-contain less than $k/2m$ sets $Z$.
Recall that each set $X$ contains a $1/m$-fraction of vertices of $H$, while each $Z$ contains a $1/k$-fraction.
Therefore, if $X$ has less than $k/2m$ sets $Z$ that are $\beta$-contained in it, then half of its vertices belong to sets $Z$ that
are either $\beta$-contained in another set $X'$ or that are not $\beta$-contained in any set. Hence, if ${\cal P}_{r-1}$ has more than $4\beta m $ such sets
$X$, then more than $2\beta$-fraction of $H$'s vertices would be bad which is impossible.
\end{proof}

The main part of the proof of Lemma \ref{lem:maintrap} will be the proof of the following lemma

\begin{lemma}\label{clm:key}
Suppose $Z \in {\cal Z}$ and $X_i,X_j \in {\cal P}_{r-1}$. Suppose
$Z \subset_{\beta} X_i$ and there are $\frac{k}{2m}$ sets $Z_u \subset_{\beta} X_j$
such that $(Z, Z_u)$ is $\beta$-helpful. Then at least $\frac{k}{4m}$ of the sets
$Z_u$ are such that $(Z,Z_u)$ is not $\gamma$-regular.
\end{lemma}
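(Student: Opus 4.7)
The plan is to argue by contradiction. Suppose more than $\frac{k}{4m}$ of the $\beta$-helpful sets $Z_u$ yield $\gamma$-regular pairs $(Z, Z_u)$, and denote this collection by $\mathcal U$; the goal is to show this forces a violation of the trap quasi-randomness captured in Lemma~\ref{lem:random3}. For each $Z_u \in \mathcal U$, I would apply $\gamma$-regularity with the test subsets $Z^A := Z \cap A_{i,j}$ and $Z^B := Z \cap B_{i,j}$ inside $Z$. The $\beta$-helpfulness of $(Z,Z_u)$ guarantees $|Z^A|, |Z^B| \geq \beta^2|Z|$, and the hypothesis $\beta \geq \gamma^{1/4}$ of Lemma~\ref{lem:maintrap} yields $\beta^2 \geq \gamma$; so these test sets are admissible, giving $|d(Z^A, Z_u) - d(Z^B, Z_u)| \leq 2\gamma$ for every $Z_u \in \mathcal U$.

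The next step is a clean decomposition of $d(Z^A, Z_u) - d(Z^B, Z_u)$ by weight source. Since $Z^A, Z^B \subseteq X_i \in {\cal P}_{r-1}$ and $Z_u \subset_\beta X_j \in {\cal P}_{r-1}$, every weight contributed in $G$ at a level $r' \leq r-1$ and every trap placed on some ${\cal P}_b$ with $b \leq r-1$ is constant across $X_i \times X_j$, and so cancels in the difference up to an $O(\beta)$ boundary error from the at most $\beta|Z_u|$ vertices of $Z_u$ outside $X_j$. The level-$r$ weight contributes exactly $\alpha_r \rho_u$, where $\alpha_r := 4^{-r}/4^{\sqrt{\log(1/\epsilon)}}$ and $\rho_u := (|Z_u \cap A_{j,i}| - |Z_u \cap B_{j,i}|)/|Z_u|$. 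The weights contributed in $G$ at levels $r' > r$ differ by at most $\alpha_{r'}$ between $(Z^A, Z_u)$ and $(Z^B, Z_u)$, so their total contribution to the difference is bounded by $\sum_{r' > r} \alpha_{r'} < \alpha_r/3$. A short check using $r < \log(1/\gamma)/(10\sqrt{\log(1/\epsilon)})$ shows $\alpha_r \gg \gamma$, so the only way the regularity inequality can hold is if the aggregate contribution $\tau_u$ from traps on partitions ${\cal P}_b$ with $b \geq r$ approximately cancels $\alpha_r \rho_u$, to within an additive error of $O(\alpha_r)$.

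The crux, and principal obstacle, is to show that this forced cancellation cannot happen for more than $\frac{k}{4m}$ distinct $Z_u$'s. For a trap on ${\cal P}_b$ of weight $\alpha'$, its contribution to $d(Z^A, Z_u)$ equals $(\alpha'/|Z^A||Z_u|)\cdot x^T Q y$, where $Q$ is the adjacency matrix of the trap graph ${\cal O}_b$ and $x, y$ encode the distributions of $Z^A$ and $Z_u$ among the ${\cal P}_b$-subclusters of $X_i$ and $X_j$ respectively; an analogous expression holds for $Z^B$. By Lemma~\ref{lem:random3}, if $x$ and $y$ satisfy the spread hypotheses (every coordinate of $x$ below a $\delta^6$-fraction of $\sum x$, and $\sum y \geq 2\delta h$), then $x^T Q y$ lies within $2 \delta^2 g_1 g_2$ of $\frac{1}{2} g_1 g_2$, so the trap contributions to $d(Z^A, Z_u)$ and $d(Z^B, Z_u)$ each lie near half the trap weight and cancel in the difference. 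The delicate point is that the hypotheses of Lemma~\ref{lem:random3} need not hold for individual $Z^A$ or $Z_u$---some clusters could be too dominant. Following the intuition in the paper's introduction, I would ``package'' the sets of $\mathcal U$ into groups for which the aggregated distribution vectors \emph{do} satisfy the spread condition, and argue about each group. Once this is done, the regularity inequality together with $\alpha_r \gg \gamma$ would force $|\rho_u|$ to be small for most $u \in \mathcal U$, which---since $\mathcal U$ occupies a constant fraction of $X_j$ while the partition $(A_{j,i}, B_{j,i})$ splits $X_j$ into two comparable halves---yields the desired contradiction.
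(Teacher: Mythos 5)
Your high-level plan---assume many pairs $(Z,Z_u)$ are $\gamma$-regular, decompose the density difference of the two halves of $Z$ against $Z_u$ by weight source, and use Lemma \ref{lem:random3} to show the traps cannot systematically cancel the level-$r$ discrepancy---is the right intuition, but there are two genuine gaps. The first is the choice of test sets and the resulting ``contradiction.'' With $Z^A=Z\cap A_{i,j}$, $Z^B=Z\cap B_{i,j}$ tested against all of $Z_u$, the level-$r$ signal is $\alpha_r\rho_u$ with $\rho_u=(|Z_u\cap A_{j,i}|-|Z_u\cap B_{j,i}|)/|Z_u|$, and your endgame is that regularity forces $|\rho_u|$ to be small for most $u$. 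But that is not a contradiction: nothing prevents every single $Z_u$ from being split almost evenly between $A_{j,i}$ and $B_{j,i}$ (the fact that $(A_{j,i},B_{j,i})$ halves $X_j$ globally says nothing about individual clusters of the arbitrary partition ${\cal Z}$), in which case there is no level-$r$ discrepancy to cancel and the argument gives nothing. The paper avoids this by testing against $W_u$, the \emph{larger} of $Z_u\cap A_{j,i}$ and $Z_u\cap B_{j,i}$ (which has size at least $|Z_u|/4\ge\gamma|Z_u|$ by $\beta$-helpfulness), so that the level-$r$ contribution to $d(\cdot,W_u)$ is exactly $\delta$ on the $A$-side and $0$ on the $B$-side, independently of how $Z_u$ splits. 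Relatedly, your ``$O(\beta)$ boundary error'' from the up to $\beta|Z_u|$ vertices of $Z_u$ outside $X_j$ is fatal rather than negligible: those vertices carry uncontrolled trap weights of total order up to $\beta$, while the signal $\delta=4^{-r}/4^{\sqrt{\log(1/\epsilon)}}$ is astronomically smaller than $\beta$ (which may be as large as $1/100$). Taking $W_u\subseteq X_j$ exactly removes this error.

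The second gap is the ``packaging,'' which you correctly identify as the crux but leave unresolved, and which cannot be done the way you suggest. Aggregation over $u\in{\cal U}$ only helps on the $X_j$ side, where the hypothesis of Lemma \ref{lem:random3} on the vector $y$ is a \emph{lower bound on total mass} (and indeed the paper sums the vectors of the $W_u$'s, after a pigeonhole over the trap index $\ell_u$ and the side $A'/B'$, to get $\sum_p y_p\ge 2\delta h$). On the $X_i$ side there is only the single set $Z$, and the hypothesis on $x$ is a \emph{per-coordinate upper bound} ($x_{p'}/\sum_p x_p<\delta^6$), which no amount of grouping over $u$ can produce. The paper instead passes to subsets $A'\subseteq Z\cap A_{i,j}$ and $B'\subseteq Z\cap B_{i,j}$ built by an iterative selection through the trap partitions, so that at each trap level each of $A',B'$ is either contained in a single cluster or $\delta^6$-spread, while still satisfying $|A'|,|B'|\ge\gamma|Z|$; it then needs a separate case analysis (Claims \ref{clm:k1}--\ref{clm:k4}) showing that if $A'$ or $B'$ is \emph{contained} in a single cluster at the relevant level, one can exhibit explicit witness subsets $W',W''\subseteq W_u$ with identical trap histories that already certify irregularity of $(Z,Z_u)$. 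Only after this reduction does the contradiction with Lemma \ref{lem:random3} go through. These constructions are the substance of the proof and are missing from your sketch.
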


We first derive Lemma \ref{lem:maintrap} from Lemmas \ref{clm:oneset} and \ref{clm:key}.

\begin{proof}[Proof of Lemma \ref{lem:maintrap}] By Lemma \ref{clm:oneset} we know that if
$Z_t \subset_{\beta} X_i$ for some $i$, but there is no $i'$ for which $Z_t \subset_{8 \beta}X_{i,i'}$,
then there is $S_t \subseteq [m]$ of size at least $2\beta m$ such that for any $j \in S_t$, the set $X_j$ $\beta$-contains at least
$k/2m$ sets $Z_u$ for which $(Z_t,Z_u)$ is $\beta$-helpful.
By Lemma \ref{clm:key}, each of these sets $X_j$ $\beta$-contains at least $k/4m$ sets $Z_u$ such that $(Z_t,Z_u)$ is not $\gamma$-regular.
Hence, all together (that is, when considering all the sets $X_j$ where $j \in S_t$) there are at least
$\beta k/2$ sets $Z_u$ such that $(Z_t,Z_u)$ is not $\gamma$-regular. Hence, since $\beta^2 > \gamma$ and we assume that ${\cal Z}$ is $\gamma$-regular, there cannot be
more than $2\beta k$ sets $Z_t$ as above.

Since we assume that for at least $(1- \beta)k$ of the sets $Z_t$ there is a set $X_i$ such that
$Z_t \subset_{\beta}X_i$, it follows that for at least $(1 - 3 \beta) k > (1-8\beta)k$ of the sets $Z_t$ there exists an $X_i$ and $i'$ such that $Z_t \subset_{8 \beta}
X_{i,i'}$, which means that ${\cal Z}$ is an $8\beta$-refinement of ${\cal P}_{r}$.
\end{proof}

In the next subsections we complete the proof of Lemma \ref{lem:maintrap} by proving Lemma \ref{clm:key}.

\subsection{Setting the stage for the proof of Lemma \ref{clm:key}}

We start by setting some notation and observing some relations between the parameters involved.
We remind the reader again that we will be assuming the conditions of Lemma \ref{lem:maintrap}.
Also, hereafter we focus only on the $k/2m$ sets $Z_u$
$\subset_{\beta} X_j$ such that $(Z, Z_u)$ are $\beta$-helpful, namely the sets in the statement of
Lemma \ref{clm:key}.

Let us set $A = Z \cap A_{i,j}$ and $B=Z \cap B_{i,j}$. Also for each of the sets $Z_u \subset_{\beta} X_{j}$, if $|Z_u \cap A_{j,i}| \geq |Z_u \cap B_{j,i}|$ we set $W_u = Z_u \cap A_{j,i}$, otherwise
we set $W_u=Z_u \cap B_{j,i}$. Since we assume that all the pairs $(Z,Z_u)$ are $\beta$-helpful and that $\beta \geq \gamma^{1/4}$ we can deduce that
\begin{equation}\label{eqk1}
\min(|A|,|B|) \geq \beta^2|Z| \geq \gamma^{1/2}|Z|\;,
\end{equation}
and for all $u$ we have
\begin{equation}\label{eqk2}
|W_u| \geq (1-\beta)|Z_u|/2 \geq |Z_u|/4\;.
\end{equation}

Let ${\cal P}_{r_1},...,{\cal P}_{r_f}$ be the canonical partitions which refine ${\cal P}_{r-1}$ and on which we have placed a trap.
For each $1 \leq \ell \leq f $, let $\alpha_{\ell}$ be the weight\footnote{So recalling the way we have defined
$H$ in Subsection \ref{subsec:trap}, we get that if $r_{\ell}=b=w(g)$ then $\alpha_{\ell}=4^{-g}$.} that was added to $H$ when placing a trap on partition ${\cal P}_{r_{\ell}}$.
Recall that $H$ contains $\frac{1}{48}\sqrt{\log(1/\epsilon)}$ many traps so
\begin{equation}\label{eqk3}
f \leq \frac{1}{48}\sqrt{\log(1/\epsilon)}\;.
\end{equation}
Also recall that by Fact \ref{trapdensity} we have that all weights $\alpha_1,\ldots,\alpha_{f}$ satisfy
\begin{equation}\label{eqk33}
\alpha_1,\ldots,\alpha_{f} \geq 4^{-\frac{1}{48}\sqrt{\log(1/\epsilon)}}\;.
\end{equation}
Set
\begin{equation}\label{eqk4}
\delta = \frac{4^{-r}}{4^{\sqrt{\log(1/\epsilon)}}}\;,
\end{equation}
and recall that $\delta$ is the extra weight we have added to some of the pairs $(x,y)$ in $G$ when considering partition ${\cal P}_{r-1}$.
Since in Theorem \ref{thm:main} we can assume that $\epsilon$ is sufficiently small, we get from (\ref{eqk3}), (\ref{eqk33}) and (\ref{eqk4}) that
\begin{equation}\label{eqk44}
\delta \ll \frac{1}{f},\alpha_1,\ldots,\alpha_{f}\;.
\end{equation}
We also observe that since $\gamma \leq \epsilon$, and Lemma \ref{lem:maintrap} assumes
that $r \leq \frac{\log(1/\gamma)}{10\sqrt{\log(1/\epsilon)}}$ we get from (\ref{eqk4}) that
\begin{equation}\label{eqk444}
\gamma^{1/3} \ll \delta \;.
\end{equation}

We now define a set $A' \subseteq A$ using the following iterative process. We first set $A_0=A$. If each of the clusters $X \in {\cal P}_{r_1}$ is such that $|A_0 \cap X | < \delta^6|A_0|$, then the process
ends with $A'=A_0$. If there is a cluster $X \in {\cal P}_{r_1}$ such that $|A_0 \cap X | \geq \delta^6|A_0|$ then we set $A_1=|A_0 \cap X|$, and continue to the next phase.
If each of the clusters $X \in {\cal P}_{r_2}$ is such that $|A_1 \cap X | < \delta^6|A_1|$, then the process ends with $A'=A_1$.
If there is a cluster $X \in {\cal P}_{r_2}$ such that $|A_1 \cap X | \leq \delta^6|A_1|$ then we set $A_2=|A_2 \cap X |$ and move to the next phase.
So the process either stops at some level ${\cal P}_{r_t}$ in which none of the clusters of ${\cal P}_{r_t}$ contains more than a $\delta^6$-fraction of $A_{t-1}$, or it goes all the way to ${\cal P}_{r_f}$.

Let us make two important observations about $A'$. First, if the process stops at level ${\cal P}_{r_t}$ (where $t \leq f$) then for any $t' > t$ we have $|A' \cap X | < \delta^6|A'|$ for all
$X \in {\cal P}_{r_{t'}}$. This follows from the fact that ${\cal P}_{r_{t'}}$ refines ${\cal P}_{r_t}$. Therefore, $A'$ has the property, that for each partition ${\cal P}_{r_t}$
the set $A'$ is either contained in a single cluster $X \in {\cal P}_{r_t}$ or none of the clusters contains more than a $\delta^6$-fraction of $A'$.

The second observation is that at each iteration the process picks a subset $A_i$ satisfying $|A_i| \geq \delta^6|A_{i-1}|$.
Since we have at most $f$ iterations, we get that the final set $A'$ we end up with satisfies

\begin{equation}\label{eqk5}
|A'| \geq \delta^{6f}|A| = \left(\frac{4^{-r}}{4^{\sqrt{\log(1/\epsilon)}}}\right)^{6f} |A| \geq_{(\ref{eqk3})} \left(\frac{4^{-r}}{4^{\sqrt{\log(1/\epsilon)}}}\right)^{\frac{6}{48}\sqrt{\log(1/\epsilon)}} |A| \geq \epsilon^{1/4}\gamma^{1/4} |A| \geq \gamma|Z|\;,
\end{equation}
where the third inequality relies on the assumption of Lemma \ref{lem:maintrap} that $r \leq \frac{\log(1/\gamma)}{10\sqrt{\log(1/\epsilon)}}$
and the last uses (\ref{eqk1}) and the fact that $\gamma \leq \epsilon$.
We now use the same process to pick a set $B' \subseteq B$ satisfying the same properties discussed above, and whose size also satisfies
\begin{equation}\label{eqk55}
|B'| \geq \gamma|Z|\;.
\end{equation}

Take one of the sets $W=W_u$ and assume without loss of generality that $W \subseteq A_{j,i}$.
Recall that by $d_G(A',W)$ and $d_G(B',W)$ we denote the densities between these sets in the graph $G$, that is, before adding the traps
to obtain the final graph $H$. First note that since $A',B'$ both belong to $X_i \in {\cal P}_{r-1}$ and $W \subseteq X_j$, we can infer that
exactly the same weight was added in $G$ to $d(A',W)$ and $d(B',W)$ by the partitions ${\cal P}$ that are refined by ${\cal P}_{r-1}$. Now recall that
we put weight $\delta$ between all the edges connecting a vertex in $A_{i,j}$ and a vertex in $A_{j,i}$ and that we did not do so for edges connecting
a vertex in $B_{i,j}$ and a vertex in $A_{j,i}$. Since $A' \subseteq A_{i,j}$, $B' \subseteq B_{i,j}$ and $W \subseteq A_{j,i}$ this means that ${\cal P}_{r-1}$ creates
a discrepancy of $\delta$ between $d_G(A',W)$ and $d_G(B',W)$. Now recall that the weights assigned by $G$ to the partitions ${\cal P}$ which refine ${\cal P}_{r-1}$ are
$\delta/4,\delta/4^2,\delta/4^3,\ldots$. Since the sum of these weights is at most $\delta/3$ we get that
\begin{equation}\label{eqk6}
|d_G(A',W)-d_G(B',W)| \geq \frac23\delta \geq_{(\ref{eqk444})} \gamma \;.
\end{equation}
It thus follows from (\ref{eqk2}) (\ref{eqk5}), (\ref{eqk55}) and (\ref{eqk6}) that if we had not added the traps to $G$, we would have thus concluded that {\em every} $\beta$-helpful pair $(Z,Z_u)$ is not $\gamma$-regular.
So to finish the proof we need to show that a large number of these $\beta$-helpful pairs are not $\gamma$-regular in $H$ as well.

Recall that ${\cal P}_{r_1},...,{\cal P}_{r_f}$ are the partitions which refine ${\cal P}_{r-1}$ and on which we have placed a trap.
For $1 \leq \ell \leq f$ we let $d_{\ell}(A,B)$ be the weight added to $d(A,B)$ by the trap placed on ${\cal P}_{r_{\ell}}$.
We thus have the following claim:

\begin{claim}\label{clm:disc} If $(Z,Z_u)$ is $\gamma$-regular, then there is $1 \leq \ell \leq f$ for
which
\begin{equation}\label{eqk66}
|d_{\ell}(A',W_u)- d_{\ell}(B',W_u)| > 4\delta^2\;.
\end{equation}
\end{claim}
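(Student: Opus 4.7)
The plan is to prove Claim \ref{clm:disc} by contradiction, using the $\gamma$-regularity hypothesis together with the decomposition of $d_H$ into contributions from $G$ and from the various traps.

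First I would verify that $A'$, $B'$, and $W_u$ are all large enough to apply the $\gamma$-regularity of $(Z,Z_u)$. By (\ref{eqk5}) and (\ref{eqk55}) we have $|A'|,|B'| \geq \gamma|Z|$, and by (\ref{eqk2}) we have $|W_u| \geq |Z_u|/4 \geq \gamma|Z_u|$ (since $\gamma \leq \epsilon \leq c_0$ is small). Hence $\gamma$-regularity of $(Z, Z_u)$ gives that both $d_H(A',W_u)$ and $d_H(B',W_u)$ are within $\gamma$ of $d_H(Z,Z_u)$, so
\[
|d_H(A',W_u) - d_H(B',W_u)| \leq 2\gamma.
\]

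Next I would decompose $d_H$ into the contributions coming from $G$ and from each of the traps placed in $H$. The key observation here is that any trap placed on a partition $\mathcal{P}_b$ with $b \leq r-1$ contributes the \emph{same} amount to $d(A',W_u)$ as to $d(B',W_u)$: since $A',B' \subseteq X_i \in \mathcal{P}_{r-1}$ and $W_u \subseteq X_j$, all of $A' \times W_u$ and $B' \times W_u$ lie in the same pair of clusters of $\mathcal{P}_b$, and the trap assigns a uniform extra weight (either $\alpha_\ell$ or $0$) to all such pairs. Consequently these traps cancel in the difference, and we obtain
\[
d_H(A',W_u) - d_H(B',W_u) = \bigl[d_G(A',W_u) - d_G(B',W_u)\bigr] + \sum_{\ell=1}^{f}\bigl[d_\ell(A',W_u) - d_\ell(B',W_u)\bigr].
\]

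Now I would combine this with (\ref{eqk6}), which already gives $|d_G(A',W_u) - d_G(B',W_u)| \geq \tfrac{2}{3}\delta$. Suppose for contradiction that $|d_\ell(A',W_u) - d_\ell(B',W_u)| \leq 4\delta^2$ for every $\ell$. Then by the triangle inequality and (\ref{eqk3}),
\[
\left|\sum_{\ell=1}^f \bigl[d_\ell(A',W_u) - d_\ell(B',W_u)\bigr]\right| \leq 4f\delta^2,
\]
and by (\ref{eqk44}) we have $4f\delta \ll 1$, so $4f\delta^2 \leq \delta/3$. Combining with the lower bound from $G$ gives
\[
|d_H(A',W_u) - d_H(B',W_u)| \geq \tfrac{2}{3}\delta - \tfrac{1}{3}\delta = \tfrac{\delta}{3}.
\]
On the other hand, (\ref{eqk444}) gives $\delta \gg \gamma^{1/3} \gg \gamma$, so $\delta/3 > 2\gamma$, contradicting the bound $|d_H(A',W_u) - d_H(B',W_u)| \leq 2\gamma$ from $\gamma$-regularity. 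This contradiction forces some $\ell$ with $|d_\ell(A',W_u) - d_\ell(B',W_u)| > 4\delta^2$.

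The main obstacle here, modest but essential, is the bookkeeping showing that traps on partitions coarser than $\mathcal{P}_{r-1}$ contribute identically to the two densities and thus drop out — everything else is a triangle-inequality argument driven by the strict hierarchy $\gamma \ll \delta \ll 1/f$ recorded in (\ref{eqk44}) and (\ref{eqk444}).
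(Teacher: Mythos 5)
Your proposal is correct and follows essentially the same route as the paper: decompose the discrepancy into the contribution from $G$ (bounded below via (\ref{eqk6})) plus the contributions of the traps on ${\cal P}_{r_1},\ldots,{\cal P}_{r_f}$, note that traps on partitions coarser than ${\cal P}_{r-1}$ cancel, and use $4f\delta^2 \leq \delta/3$ together with $\delta \gg \gamma$ to contradict $\gamma$-regularity. Your explicit check that the difference must exceed $2\gamma$ (rather than $\gamma$) is a slightly more careful phrasing of the final step, but the argument is the same.
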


\begin{proof} Recall that since both $A',B' \subseteq X_i \in {\cal P}_{r-1}$ and $W_u \subseteq X_j \in {\cal P}_{r-1}$
we get that $d_H(A',W_u)$ and $d_H(B',W_u)$ get the same weight from each of the traps placed on partitions ${\cal P}_{r'}$
that are refined by ${\cal P}_{r-1}$ (that includes the case that a trap was placed on ${\cal P}_{r-1}$). This means that a discrepancy
between $d_H(A',W_u)$ and $d_H(B',W_u)$ can come either from $d_G(A',W_u)$ and $d_G(B',W_u)$ or from traps placed on partitions
${\cal P}_{r_1},\ldots,{\cal P}_{r_f}$. Thus, if (\ref{eqk66}) does not hold for all $1 \leq \ell \leq f$ then we would have
\begin{eqnarray*}
\left|d_{H}(A',W_u)- d_{H}(B',W_u)\right|&=&\left|d_{G}(A',W_u)- d_{G}(B',W_u)+\sum^{f}_{\ell=1}(d_{\ell}(A',W_u)- d_{\ell}(B',W_u))\right|\\
&\geq& \left|d_{G}(A',W_u)- d_{G}(B',W_u)\right|-\sum^{f}_{\ell=1}\left|(d_{\ell}(A',W_u)- d_{\ell}(B',W_u))\right|\\
&\geq& \frac23\delta-4f\delta^2 \geq_{(\ref{eqk44})} \frac13\delta \geq_{(\ref{eqk444})} \gamma\;,
\end{eqnarray*}
where in the second inequality we use (\ref{eqk6}). Recalling (\ref{eqk2}), (\ref{eqk5}) and (\ref{eqk55}) we thus infer that $(Z,Z_u)$ is not $\gamma$-regular
which is a contradiction.
\end{proof}

Assume that for each $u$ for which $(Z,Z_u)$ is $\gamma$-regular, we set $\ell_u$ to be the {\em smallest} integer for which (\ref{eqk66}) holds.
Recall that $\alpha_{\ell_u}$ is the weight added by the trap placed on the partition ${\cal P}_{r_{\ell_u}}$.
In the following subsection we prove Lemma \ref{clm:key} via Claim \ref{clm:k4} (stated below) and in the subsection following it we prove this claim thus
completing the proof of Lemma \ref{clm:key}.

\begin{claim}\label{clm:k4} If $(Z,Z_u)$ is $\gamma$-regular, then either $A'$ or $B'$ satisfies the following two conditions (we write the condition with respect to $A'$):
\begin{itemize}
\item There is no $X \in {\cal P}_{r_{\ell_u}}$ such that $A' \subseteq X$.
\item $|d_{\ell_u}(A',W_u)-\frac12\alpha_{\ell_u}| > 2\delta^2$.
\end{itemize}
\end{claim}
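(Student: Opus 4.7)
The plan is to prove the claim by contradiction: I assume that neither $A'$ nor $B'$ satisfies both of the stated conditions, and derive a contradiction from the defining property (\ref{eqk66}) of $\ell_u$.

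The first step is straightforward. If both $A'$ and $B'$ had $|d_{\ell_u}(\cdot,W_u) - \tfrac12\alpha_{\ell_u}| \leq 2\delta^2$, then the triangle inequality would yield $|d_{\ell_u}(A',W_u) - d_{\ell_u}(B',W_u)| \leq 4\delta^2$, contradicting (\ref{eqk66}). So one of the two, without loss of generality $A'$, satisfies condition~2; and since by assumption $A'$ does not satisfy both conditions, condition~1 must fail, giving $A' \subseteq X$ for some $X \in {\cal P}_{r_{\ell_u}}$. Because $A_{i,j}$ is a union of clusters at level $r$ and ${\cal P}_{r_{\ell_u}}$ refines ${\cal P}_r$, we also get $X \subseteq A_{i,j}$.

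The second step is to analyze $B'$ via the dichotomy built into its iterative construction (the paragraph preceding (\ref{eqk55})): at level $\ell_u$, either $B' \subseteq X'$ for some cluster $X' \in {\cal P}_{r_{\ell_u}}$ (in which case necessarily $X' \subseteq B_{i,j}$, so $X \ne X'$), or else $|B' \cap Y| < \delta^6|B'|$ for every $Y \in {\cal P}_{r_{\ell_u}}$ (the ``spread'' case). In the spread case, I plan to apply Lemma \ref{lem:random3} with $b = r_{\ell_u}$ and $b' = r-1$ to the vectors $\tilde b_p = |B' \cap X_p|$ and $\tilde w_p = |W_u \cap X_p|$ indexed by the clusters of ${\cal P}_{r_{\ell_u}}$, after first replacing $W_u$ by $W_u \cap X_j$ to meet the localization hypothesis (the $\beta|Z_u|$ loss from $Z_u \subset_\beta X_j$ is negligible). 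The size hypothesis $|W_u \cap X_j| > 2\delta|X_j|$ follows from (\ref{eqk2}) and the parameter setup of Lemma \ref{lem:maintrap}, and the conclusion then gives $|d_{\ell_u}(B', W_u) - \tfrac12\alpha_{\ell_u}| \leq 2\delta^2\alpha_{\ell_u} \leq 2\delta^2$.

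The main obstacle is the final step: producing the actual contradiction from each sub-case for $B'$. Neither sub-case immediately yields one, because triangle inequalities at level $\ell_u$ alone only recover the bound $|d_{\ell_u}(A',W_u) - \tfrac12\alpha_{\ell_u}| > 2\delta^2$ we already know. To force the contradiction I plan to combine the $\gamma$-regularity of $(Z,Z_u)$, which bounds $|d_H(A',W_u) - d_H(B',W_u)| \leq 2\gamma$, with the fixed $G$-contribution $|d_G(A',W_u) - d_G(B',W_u)| \geq \tfrac23\delta$ from (\ref{eqk6}), the minimality of $\ell_u$ controlling $|d_{\ell'}(A',W_u) - d_{\ell'}(B',W_u)| \leq 4\delta^2$ for each $\ell' < \ell_u$, and the geometric decay of the trap weights $\alpha_\ell$ bounding the contributions of the levels $\ell > \ell_u$. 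Carrying out this bookkeeping precisely — in particular, matching the spread/single-cluster dichotomy in the constructions of $A'$ and $B'$ against the trap structure at each of the $f$ relevant levels, and invoking Lemma \ref{lem:random3} at each coarser trap level where it applies — is the delicate part of the proof.
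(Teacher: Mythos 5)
Your reduction to ``at least one of $A',B'$ satisfies the second bullet'' is correct and matches the paper: if both densities were within $2\delta^2$ of $\tfrac12\alpha_{\ell_u}$, the triangle inequality would contradict (\ref{eqk66}). Everything after that, however, has two genuine gaps. First, your application of Lemma \ref{lem:random3} to the single pair $(B',W_u)$ does not go through: item (4) of that lemma requires the $y$-vector to have total mass at least $2\delta h$, i.e.\ essentially $|W_u|\geq 2\delta|X_j|$. From (\ref{eqk2}) one only gets $|W_u|\geq |Z_u|/4 = n/4k$ while $|X_j|=n/m$, so you would need $m/4k>2\delta$ — false whenever $k\gg m/\delta$, which is the typical situation (already at $r=1$ one has $m=1$ and $k\geq 1/\epsilon\gg 1/\delta$). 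This is precisely why the paper never invokes the quasi-randomness of the trap inside the proof of Claim \ref{clm:k4}: Lemma \ref{lem:random3} is applied only in Subsection \ref{subsec:d}, after aggregating $k/16mf$ of the sets $W_u$ into a single vector $y=\sum_{u\in S}x^u$ so that the mass condition holds.

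Second, and more seriously, the final contradiction is missing, as you acknowledge, and the bookkeeping you propose cannot supply it. Minimality of $\ell_u$ controls only the coarser trap levels $\ell<\ell_u$; the finer levels $\ell>\ell_u$ carry total weight up to $\alpha_{\ell_u}/3$, which can completely cancel a level-$\ell_u$ discrepancy that is only guaranteed to exceed $4\delta^2\ll\alpha_{\ell_u}$. So $\gamma$-regularity plus (\ref{eqk6}) plus geometric decay yields nothing in the remaining cases. The paper's argument is structurally different: it does not seek a contradiction with (\ref{eqk66}), but instead shows (Claims \ref{clm:k1}--\ref{clm:k3}) that every configuration in which the claim would fail already forces $(Z,Z_u)$ to be irregular, by exhibiting explicit witness subsets. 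The key missing ingredient is the argument of Claim \ref{clm:k2}: when $A'$ lies in a single cluster $Y_q$ of ${\cal P}_{r_{\ell_u}}$ and $d_{\ell_u}(A',W_u)$ is in the middle range $[\delta^2,\alpha_{\ell_u}-\delta^2]$, one splits $W_u$ into the part $W^1$ lying in clusters adjacent to $Y_q$ in the trap graph (so $d_{\ell_u}(A',W^1)=\alpha_{\ell_u}$) and the rest $W^2$ (with $d_{\ell_u}(A',W^2)=0$), and then pigeonholes over the at most $2^{f}$ patterns of coarser traps to extract $W'\subseteq W^1$, $W''\subseteq W^2$ of relative size $\epsilon^{1/10}$ whose $H$-densities to $A'$ differ by $\Omega(\alpha)$ — a direct violation of $\gamma$-regularity. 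This, together with Claim \ref{clm:k1} (which needs a discrepancy of order $\alpha_{\ell_u}$, not merely $4\delta^2$, to survive the finer traps), is what closes the case analysis; neither appears in your proposal.
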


\subsection{Proof of Lemma \ref{clm:key} via Claim \ref{clm:k4}}\label{subsec:d}

Once again, let us recall that given $Z \subset_{\beta} X_i$ and $X_j$ we are focusing only on
the $k/2m$ sets $Z_u \subset_{\beta} X_j$ such that $(Z, Z_u)$ are $\beta$-helpful.
We need to show that at least $k/4m$ of the sets $Z_u$ are such that $(Z, Z_u)$ is not $\gamma$-regular.

Suppose to the contrary that there are $k/4m$ sets $Z_u$ for which $(Z,Z_u)$ is $\gamma$-regular. Then
by Claim \ref{clm:k4}, for such $Z_u$ either $A'$ or $B'$ satisfies the two conditions of Claim \ref{clm:k4}. Suppose without loss of generality
that in at least $k/8m$ of these cases the set is $A'$. Also, suppose without loss of generality that out of these $k/8m$ cases, in at least $k/16m$
we have $d_{\ell_u}(A',W_u) > \alpha_{\ell_u}/2+2\delta^2$. Finally, since there are only $f$ traps in the canonical partitions
that refine ${\cal P}_{r - 1}$,
we get that there must be an integer $1 \leq \ell \leq f$ for which
there are at least $k/16mf$ sets $W_u$ for which the above holds such that $\ell_u=\ell$. So for each of these sets we have
\begin{equation}\label{eqk8}
d_{\ell}(A',W_u) > \frac12\alpha_{\ell}+2\delta^2\;.
\end{equation}
For what follows we set $S$ to be the collection of $k/16mf$ values of $u$ for which (\ref{eqk8}) holds
and such that $\ell_u=\ell$.

We now make a simple observation which relates $d_{\ell}(A',W_u)$, the graph ${\cal O}_{r_{\ell}}$
that was used to define the trap which was placed on level ${\cal P}_{r_{\ell}}$ and the way in which $A'$ and $W$ are ``spread'' over the clusters of ${\cal P}_{r_{\ell}}$.
Let $m_{r_{\ell}}$ denote the number of clusters of ${\cal P}_{r_{\ell}}$ (which is also the number of vertices of ${\cal O}_{r_{\ell}}$).
Let us use
$Y_1, \ldots, Y_{m_{r_{\ell}}}$
to denote the clusters of ${\cal P}_{r_{\ell}}$.
Suppose $X_i$ and $X_j$ each contain $h$ clusters of ${\cal P}_{r_{\ell}}$.


Let $x^a \in [0,1]^{m_{r_{\ell}}}$ be the vector satisfying $x^a_p=|A' \cap Y_{p}|/|Y_{p}|$ for every $1 \leq p \leq m_{r_{\ell}}$.
Similarly, let $x^u \in [0,1]^{m_{r_{\ell}}}$ be the vector satisfying $x^u_p=|W_u \cap Y_{p}|/|Y_{p}|$ for every $1 \leq p \leq m_{r_{\ell}}$.
If we take $Q$ to be the adjacency matrix of ${\cal O}_{r_{\ell}}$ then

\begin{equation}\label{eqk9}
d_{\ell}(A',W_u)=\frac{(x^a)^T(\alpha_{\ell}Q)x^u}{(\sum_{p}x^a_p)(\sum_{p}x^u_p)}\;.
\end{equation}

Our plan now is to show that the information we have gathered thus far contradicts Lemma \ref{lem:random3}.
Let us start setting the stage for applying this lemma. First, as partition ${\cal P}_{b}$ in Lemma \ref{lem:random3} we will take partition ${\cal P}_{r_{\ell}}$.
So we are using $m_{r_{\ell}}$ as $m_b$ in Lemma \ref{lem:random3}.

Second, as partition ${\cal P}_{b'}$ in Lemma \ref{lem:random3} we will take partition
${\cal P}_{r-1}$. Note that here and in Lemma \ref{lem:random3} we use $m$ to denote the number of clusters in partitions ${\cal P}_{r-1}$ and ${\cal P}_{b'}$ and that
we use $X_1,\ldots,X_m$ to name the $m$ clusters of both partitions. As $\delta$ in Lemma \ref{lem:random3} we use the same $\delta$ used here, that is $\delta = 4^{-r}/4^{\sqrt{\log(1/\epsilon)}}$ as defined in (\ref{eqk4}).
We clearly have $\delta < 1/200$. Also, to satisfy the first condition of Lemma \ref{lem:random3}
we need to make sure that $\delta > 1/\log(m_{r_{\ell}})$, or equivalently that
\begin{equation}\label{eqlogs}
m_{r_{\ell}}=_{(\ref{eq:parorder})}T^{\phi}(r_{\ell}) \geq_{(\ref{eqtower1})} T(\lfloor r_{\ell}/2\rfloor) > 2^{4^{r+\sqrt{\log(1/\epsilon)}}}=_{(\ref{eqk4})}2^{1/\delta}\;,
\end{equation}
We need to verify the second inequality.
Recall that $r_{\ell} \geq r$ since we are only considering traps that were placed on partitions refining ${\cal P}_{r-1}$.
Recalling (\ref{eqw0}) we also have $r_{\ell} \geq \log\log(1/\epsilon)$ since the first trap was placed on the partition with this index.
It is easy to see that these two facts imply that the second inequality in (\ref{eqlogs}) indeed holds.

As the vector $x$ in Lemma \ref{lem:random3} we will take the vector $x^a$ defined above,
and as the vector $y$ we take $\sum_{u \in S}x^u$ with $S$ the set defined just after equation (\ref{eqk8}).
Note that since $A' \subseteq X_i$ and for all $u$ we have $W_u \subseteq X_j$, these vectors satisfy the second condition of Lemma \ref{lem:random3}.

Now, by Claim \ref{clm:k4} there is no cluster\footnote{Recall that we assume that $\ell_u=\ell$ for the set $W_u$ with $u \in S$. See the discussion at the beginning of this subsection.} $X \in {\cal P}_{r_\ell}$ such that $A' \subseteq X$. By the process we have
used to define $A'$, this means that each of the clusters of $X \in {\cal P}_{r_{\ell}}$ contains no more than a $\delta^6$-fraction of the vertices of $A'$.
This means that the vector $x^a$ defined above satisfies the third item of Lemma \ref{lem:random3}.

Finally, observe that each of the sets $Y_{p}$ contains a $1/mh$-fraction of $H$'s vertices\footnote{Since each $X_i$ contains a $1/m$ fraction
of $H$'s vertices and we assumed that $X_i$ is partitioned into $h$ sets $Y_{p}$.} while each set $Z_u$ takes a $1/k$-fraction. We thus get from (\ref{eqk2}) that the sum of entries
of each of the vectors $x^{u}$ is at least $mh/4k$. Since we assume that there are at least $k/16mf$ sets $W_u$, we infer that the
sum of entries of $y$ is at least $h/64f \geq_{(\ref{eqk44})} 2\delta h$. Hence $y$ satisfies the fourth condition of Lemma \ref{lem:random3}.

Since we assume that each of the sets $W_u$ satisfies (\ref{eqk8}), we can use the formulation of (\ref{eqk9}) to infer that
\begin{equation}\label{eqf}
(x^a)^TQx^u > (1/2 + 2\delta^2) \left(\sum_{p}x^a_p\right)\left(\sum_{p}x^u_p\right) = \left(1/2 + 2\delta^2\right) g_1g_2^u\;,
\end{equation}
where we set $g_1=\sum_{p}x^a_p$ and $g^u_2=\sum_{p}x^u_p$. Now set $g_2=\sum_{p}y_p=\sum_ug^u_2$.
Summing over all vectors $x^u$, and applying (\ref{eqf}) we have
$$
(x^a)^TQ y=(x^a)^TQ\left(\sum_u x^u\right) > \left(1/2 + 2\delta^2\right) g_1 \sum_u g^u_2= \left(1/2 + 2\delta^2\right) g_1 g_2\;,
$$
which contradicts (\ref{eqtcond}) in Lemma \ref{lem:random3}.

\subsection{Proof of Claim \ref{clm:k4}}

We recall that we use $\alpha_{\ell}$ to denote the weight
added to $H$ when placing a trap on partition ${\cal P}_{r_{\ell}}$, and that for a set $W_u$ we defined $\ell_u$ just before Claim \ref{clm:k4}.

\begin{claim}\label{clm:k1} Set $\alpha=\alpha_{{\ell_u}}$. If
$
|d_{{\ell}_u}(A',W_u)- d_{{\ell}_u}(B',W_u)| \geq 0.4\alpha
$
then $(Z,Z_u)$ is not $\gamma$-regular.
\end{claim}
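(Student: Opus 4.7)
\medskip\noindent\textbf{Proof sketch for Claim \ref{clm:k1}.} The plan is to decompose the $H$-density difference $d_H(A',W_u) - d_H(B',W_u)$ according to the layered construction of $H$, show that the contribution from the $\ell_u^{\text{th}}$ trap dominates every other contribution by a wide margin, and conclude that the resulting gap exceeds $2\gamma$, thereby forcing a violation of $\gamma$-regularity at $(Z,Z_u)$.

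Concretely, I would first write
$$ d_H(A',W_u) - d_H(B',W_u) \;=\; D_G + \sum_{\ell=1}^{f} D_\ell, $$
where $D_G := d_G(A',W_u)-d_G(B',W_u)$ and $D_\ell := d_\ell(A',W_u)-d_\ell(B',W_u)$; this decomposition is valid because the other possible contributors to $d_H$, namely traps placed on partitions that are refined by ${\cal P}_{r-1}$, contribute equally to $d_H(A',W_u)$ and $d_H(B',W_u)$ (both $A'$ and $B'$ lie in the same cluster $X_i$ of ${\cal P}_{r-1}$). Next I would bound each of the four types of pieces. By Fact \ref{ob1}, $|D_G| \leq 4^{-\sqrt{\log(1/\eps)}}$, which by (\ref{eqk33}) is dwarfed by $\alpha := \alpha_{\ell_u}$. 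For $\ell<\ell_u$, the minimality built into the definition of $\ell_u$ gives $|D_\ell|\leq 4\delta^2$, so $\sum_{\ell<\ell_u}|D_\ell| \leq 4f\delta^2 = o(\alpha)$ by (\ref{eqk44}). For $\ell=\ell_u$ the hypothesis is $|D_{\ell_u}| \geq 0.4\alpha$. Finally, for $\ell>\ell_u$ I would use only the trivial bound $|D_\ell|\leq \alpha_\ell$; since the weights $\alpha_\ell = 4^{-g_\ell}$ strictly decrease in $\ell$, consecutive ratios are at most $1/4$, and the geometric tail gives $\sum_{\ell>\ell_u}|D_\ell| \leq \alpha/4 + \alpha/16 + \cdots \leq \alpha/3$.

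Assembling these bounds through the triangle inequality then yields
$$ |d_H(A',W_u)-d_H(B',W_u)| \;\geq\; 0.4\alpha - \alpha/3 - o(\alpha) \;>\; \alpha/20. $$
Because $\alpha \geq 4^{-\frac{1}{48}\sqrt{\log(1/\eps)}}$ is much larger than $\eps \geq \gamma$ for $\eps$ sufficiently small, $\alpha/20$ in particular exceeds $2\gamma$. Recalling that $|A'|,|B'|\geq \gamma|Z|$ by (\ref{eqk5}) and (\ref{eqk55}) and $|W_u|\geq \gamma|Z_u|$ by (\ref{eqk2}), a gap of more than $2\gamma$ between $d_H(A',W_u)$ and $d_H(B',W_u)$ forces, by the triangle inequality, one of the two densities to differ from $d_H(Z,Z_u)$ by more than $\gamma$, contradicting $\gamma$-regularity of $(Z,Z_u)$.

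The only delicate point is the handling of the $\ell>\ell_u$ traps: here I have no sign-cancellation control on the individual $D_\ell$, only the trivial $\alpha_\ell$-bound. What saves the argument is the exact geometric decay of the trap weights (every subsequent trap weight is a multiple of $1/4$ of some previous one), which keeps the entire tail below $\alpha/3$, leaving a comfortable margin over the $0.4\alpha$ coming from $D_{\ell_u}$. All the remaining terms ($D_G$ and the low-$\ell$ terms) are negligible compared to $\alpha$, so the organization of the estimates is straightforward once this geometric-tail bound is in hand.
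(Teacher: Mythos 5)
Your proposal is correct and follows essentially the same route as the paper: decompose the density difference into the $G$-contribution, the traps below $\ell_u$ (bounded by $4f\delta^2$ via minimality of $\ell_u$), the $\ell_u$ trap itself, and the geometric tail of heavier-indexed traps bounded by $\alpha/3$, then observe the surviving gap of order $\alpha$ dwarfs $\gamma$ and, together with (\ref{eqk2}), (\ref{eqk5}), (\ref{eqk55}), witnesses irregularity. The paper's constants differ trivially (it lands on $0.04\alpha$ rather than $\alpha/20$), but the argument is the same.
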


\begin{proof} Recall that ${\ell}_u$ was chosen to be the smallest integer for which (\ref{eqk66}) holds. Hence
$$
\left|\sum^{\ell_u-1}_{\ell=1}d_{\ell}(A',W_u) - d_{\ell}(B',W_u)\right| \leq 4f\delta^{2} \leq_{(\ref{eqk44})}  \frac{1}{100}\alpha\;.
$$
The assumption of this claim thus gives
$$
\left|\sum^{\ell_u}_{\ell=1}d_{\ell}(A',W_u) - d_{\ell}(B',W_u)\right| \geq 0.39\alpha\;.
$$
Since the weights assigned to traps with weight smaller than $\alpha$ are given by $\alpha/4,\alpha/16,\ldots$, after taking into account all
the traps placed on ${\cal P}_{r_1},\ldots,{\cal P}_{r_{f}}$ we still have
\begin{equation}\label{eq777}
\left|\sum^{f}_{\ell=1}d_{\ell}(A',W_u) - d_{\ell}(B',W_u)\right| \geq 0.05\alpha\;.
\end{equation}
As we have noted in the proof of Claim \ref{clm:disc}, the only traps that can create a discrepancy between $d_{H}(A',W_u)$ and $d_{H}(B',W_u)$ are those
placed on ${\cal P}_{r_1},\ldots,{\cal P}_{r_f}$. Hence we can disregard the traps that were placed on partitions refined by ${\cal P}_{r-1}$, that is partitions other than ${\cal P}_{r_1},\ldots,{\cal P}_{r_f}$.
Thus, (\ref{eq777}) holds even when considering {\em all} the traps placed in $H$.
Finally, by Fact \ref{ob1} the total weight assigned to edges in $G$ is at most $1/4^{\sqrt{\log(1/\epsilon)}} \leq_{(\ref{eqk33})} 0.01\alpha$.
We thus conclude that
$$
|d_H(A',W_u) - d_H(B',W_u)| \geq 0.04 \alpha >_{(\ref{eqk33})} \epsilon \geq  \gamma\;.
$$
Recalling (\ref{eqk2}), (\ref{eqk5}) and (\ref{eqk55}) we can deduce that $(Z,Z_u)$ is not $\gamma$-regular.
\end{proof}

\begin{claim}\label{clm:k2} If there is a cluster $X \in {\cal P}_{r_{\ell}}$ such that $A' \subseteq X$ and
\begin{equation}\label{eqk7}
\delta^2  \leq d_{\ell}(A',W_u) \leq \alpha_{{\ell}}-\delta^2\;,
\end{equation}
then $(Z,Z_u)$ is not $\gamma$-regular\footnote{Note that in this claim we are not assuming that $\ell=\ell_u$. That is, the claim is true for all $1 \leq \ell \leq f$. However, we will
only apply it with $\ell=\ell_u$.}.
\end{claim}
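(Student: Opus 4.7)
Let $N_\ell \subseteq V(H)$ denote the union of clusters $Y \in {\cal P}_{r_\ell}$ such that $(X,Y)$ is an edge of the trap graph ${\cal O}_{r_\ell}$, and split $W_u = W_u^+ \sqcup W_u^-$ via $W_u^+ := W_u \cap N_\ell$. Since $A' \subseteq X$, every pair $(x,y) \in A' \times W_u^+$ (resp.\ $A' \times W_u^-$) picks up exactly $\alpha_\ell$ (resp.\ $0$) from the trap at level $\ell$; hence $d_\ell(A',W_u^+)=\alpha_\ell$, $d_\ell(A',W_u^-)=0$, and $d_\ell(A',W_u) = \alpha_\ell \cdot |W_u^+|/|W_u|$. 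The hypothesis then forces $|W_u^+|/|W_u| \in [\delta^2/\alpha_\ell,\, 1-\delta^2/\alpha_\ell]$, so $|W_u^\pm| \geq \delta^2|W_u|/\alpha_\ell$; combined with (\ref{eqk2}), (\ref{eqk33}) and (\ref{eqk444}) one verifies $|W_u^\pm| \geq \gamma|Z_u|$, and together with $|A'| \geq \gamma|Z|$ from (\ref{eqk5}) both $(A',W_u^\pm)$ are admissible witness pairs for $\gamma$-regularity of $(Z,Z_u)$.

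\textbf{Strategy.} The aim is to show a density gap of at least $2\gamma$ between the pairs $(A',W_u^+)$ and $(A',W_u^-)$; the triangle inequality through $d_H(Z,Z_u)$ then immediately yields a witness pair violating $\gamma$-regularity. Decompose
$$
d_H(A',W_u^+) - d_H(A',W_u^-) = \bigl(d_G(A',W_u^+) - d_G(A',W_u^-)\bigr) + \sum_{\ell'=1}^{f}\bigl(d_{\ell'}(A',W_u^+) - d_{\ell'}(A',W_u^-)\bigr).
$$
By Fact \ref{ob1} and (\ref{eqk33}) the $G$-term is at most $1/4^{\sqrt{\log(1/\epsilon)}} \leq 0.01\alpha_\ell$; the $\ell'=\ell$ summand is exactly $\alpha_\ell$; and the sum over finer levels $\ell'>\ell$ is bounded by $\sum_{\ell'>\ell}\alpha_{\ell'}\leq \alpha_\ell/3$ via geometric decay of trap weights.

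\textbf{Neutralizing coarser traps by a refinement.} For each coarser level $\ell'<\ell$ we have $A' \subseteq X \subseteq X'\in {\cal P}_{r_{\ell'}}$, so $d_{\ell'}(A',W) = \alpha_{\ell'}|W \cap N^{(\ell')}|/|W|$ where $N^{(\ell')}$ denotes the union of trap neighbors of $X'$ at level $\ell'$; the raw difference $\alpha_{\ell'}(q^+_{\ell'}-q^-_{\ell'})$ can reach $\alpha_{\ell'}$ and $\sum_{\ell'<\ell}\alpha_{\ell'}$ may swamp $\alpha_\ell$, so the naive bound fails. To kill these contributions I refine $W_u$: for each $\sigma\in\{+,-\}^{\ell-1}$ put
$$
W_u^\sigma := \{y\in W_u : y\in N^{(\ell')} \iff \sigma_{\ell'}=+ \text{ for every } \ell'<\ell\},
$$
and split $W_u^\sigma = W_u^{\sigma,+}\sqcup W_u^{\sigma,-}$ via $N_\ell$. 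Inside any fixed $W_u^\sigma$ the coarser-trap densities $d_{\ell'}(A',\cdot)$ are \emph{constant} (determined by $\sigma_{\ell'}$), so whenever both $W_u^{\sigma,\pm}$ are nonempty the coarser contributions cancel and
$$
d_H(A',W_u^{\sigma,+}) - d_H(A',W_u^{\sigma,-}) \geq \alpha_\ell - \alpha_\ell/3 - 0.01\alpha_\ell \geq \alpha_\ell/2.
$$

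\textbf{Main obstacle.} It remains to find a $\sigma$ for which both $|W_u^{\sigma,\pm}| \geq \gamma|Z_u|$; with such $\sigma$ the pair $(A',W_u^{\sigma,+})$ versus $(A',W_u^{\sigma,-})$ exhibits a density gap $\geq \alpha_\ell/2 \gg 2\gamma$ (recall $\alpha_\ell \geq 4^{-\sqrt{\log(1/\epsilon)}/48}$ by (\ref{eqk33}) and $\gamma\leq \epsilon$). The number of patterns is $2^{\ell-1}\leq 2^f \leq 2^{\sqrt{\log(1/\epsilon)}/48}$, while both $|W_u^+|$ and $|W_u^-|$ exceed $\delta^2|W_u|/\alpha_\ell$; the inequality $\delta^2/\alpha_\ell \gg 2^f \gamma$ (a consequence of $\delta\gg\gamma^{1/3}$ in (\ref{eqk444}), $\alpha_\ell\leq 1$, and the bound on $f$) is what drives the required counting argument. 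The hard part of the write-up will be executing this pigeonhole cleanly: ruling out the pathological possibility that for every $\sigma$ the minority side $\min(|W_u^{\sigma,+}|,|W_u^{\sigma,-}|)$ is below $\gamma|Z_u|$ while simultaneously $|W_u^\pm|$ remain as large as $\delta^2|W_u|/\alpha_\ell$, which amounts to showing that the total ``minority mass'' $\sum_\sigma \min(|W_u^{\sigma,+}|,|W_u^{\sigma,-}|)$ cannot be so small given the size lower bounds on $|W_u^\pm|$.
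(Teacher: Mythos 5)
Your setup coincides with the paper's: you split $W_u$ by the level-$\ell$ trap neighborhood of the cluster $X\supseteq A'$ into $W_u^{+}\sqcup W_u^{-}$ with $d_\ell(A',W_u^{+})=\alpha_\ell$, $d_\ell(A',W_u^{-})=0$ and both pieces of size at least $\delta^2|W_u|$, and you correctly identify that the coarser traps (which have \emph{larger} weights $\alpha_{\ell'}$, $\ell'<\ell$) are the real obstacle, to be neutralized by stratifying $W_u$ according to the pattern $\sigma$ of coarser traps hitting each cluster $Y_p\subseteq X_j$. Up to this point you are reproducing the paper's argument.

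The gap is in the last step. You want a \emph{single} pattern $\sigma$ for which both $|W_u^{\sigma,+}|$ and $|W_u^{\sigma,-}|$ exceed $\gamma|Z_u|$, and you defer "ruling out the pathological possibility" that every pattern class is essentially one-sided. That possibility cannot be ruled out: both the pattern $\sigma$ and the sign $\pm$ are constant on each cluster $Y_p$ of ${\cal P}_{r_\ell}$, and nothing in the construction controls their joint distribution. For instance, all clusters carrying pattern $\sigma_1$ could lie in $N_\ell$ and all clusters carrying pattern $\sigma_2$ outside it; then every $\min(|W_u^{\sigma,+}|,|W_u^{\sigma,-}|)$ is zero while $|W_u^{\pm}|$ are both half of $|W_u|$. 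So the pigeonhole you are counting on is false, not merely hard. The paper's proof of Claim \ref{clm:k2} sidesteps this by choosing the patterns for the two sides \emph{independently}: since there are fewer than $1/\epsilon^{1/10}$ patterns, some pattern $S'$ captures at least an $\epsilon^{1/10}$-fraction $W'$ of $W^{1}=W_u^{+}$, and some (possibly different) pattern $S''$ captures a set $W''\subseteq W^{2}=W_u^{-}$ of relative size $\epsilon^{1/10}$; if $S'=S''$ the coarse contributions cancel and the level-$\ell$ gap $\alpha_\ell$ survives, while if $S'\neq S''$ one takes the smallest index $\ell'$ on which they differ and the discrepancy is then dominated by $\alpha_{\ell'}>\alpha_\ell$, giving $|d_H(A',W')-d_H(A',W'')|\geq\alpha_{\ell'}/4\geq\gamma$ in either case, with $|W'|,|W''|\geq\epsilon^{1/10}\delta^2|W_u|\geq\gamma|Z_u|$. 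You need this "choose the two patterns separately and exploit the largest differing weight" device; without it your argument does not close.
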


\begin{proof} Let us define the vectors $x^a$ and $x^u$ as we have done just before equation (\ref{eqk9}).
Let us also use the terminology used when defining these vectors.
So $X=Y_{q}$ for some $Y_{q} \subseteq X_i$ implying
that $x^a_{q}=|A'|/|Y_{q}|$ and all the other entries of $x^a$ are $0$.
Suppose $Y_{1},\ldots,Y_{h}$ are the clusters
of ${\cal P}_{r_{\ell}}$ within $X_j$. Let ${\cal O}_{r_{\ell}}$ be the graph used when placing
the trap on ${\cal P}_{r_{\ell}}$, let $v_{q} \in V({\cal O})$ be the vertex corresponding to cluster $Y_{q}$ and let
$u_1,\ldots,u_h$ be the vertices corresponding to $Y_{1},\ldots,Y_{h}$.
Finally set $N=\{p: (v_{q},u_{p}) \in E({\cal O})\}$ to be the indices of the
vertices $u_1,\ldots,u_h$ which are neighbors of $v_{q}$ in ${\cal O}$.
Then by (\ref{eqk9}) and (\ref{eqk7}) we have
$$
\delta^2  \leq \frac{\alpha_{\ell}\sum_{p \in N}x^u_{p}}{\sum^h_{p=1}x^u_{p}} \leq \alpha_{{\ell}}-\delta^2\;,
$$
implying that
$$
\delta^2  \leq \frac{\sum_{p \in N}x^u_{p}}{\sum^h_{p=1}x^u_{p}} \leq 1-\delta^2\;.
$$
This means that if we take $W^1=W_u \cap (\bigcup_{p \in N}Y_{p})$ then
\begin{equation}\label{eqk10}
\delta^2|W_u| \leq |W^1| \leq (1-\delta^2)|W_u|\;.
\end{equation}
Let $W^2=W_u \setminus W^1$ and note that it satisfies (\ref{eqk10}) as well.
A critical observation now is that our choice of $N$ implies that
for all $p \in N$ the pair $(Y_{q},Y_{p})$ belongs to the trap placed on ${\cal P}_{r_{\ell}}$ and for
all $p \not \in N$ the pair $(Y_{q},Y_{p})$ does not belong to this trap. This means that
$d_{\ell}(A',W^1)=\alpha_{\ell}$ while $d_{\ell}(A',W^2)=0$.

We will now
show that we can find $W' \subseteq W^1$ and $W'' \subseteq W^2$, satisfying $|W^{'}| \geq \epsilon^{1/10}|W^1|$, $|W^{''}| \geq \epsilon^{1/10}|W^2|$ and
\begin{equation}\label{eqlast}
|d_H(A',W')-d_H(A',W'')|\geq \gamma\;.
\end{equation}
Recalling (\ref{eqk5}), this will imply that $(Z,Z_u)$ is not $\gamma$-regular as the fact that $|W'| \geq \epsilon^{1/10}|W^1|$ means that
$$
|W'| \geq \epsilon^{1/10}|W^1| \geq_{(\ref{eqk10})} \epsilon^{1/10}\delta^2|W_u| \geq_{(\ref{eqk2})} \frac14\epsilon^{1/10}\delta^2|Z_u| \geq \frac14\gamma^{1/10}\delta^2|Z_u|  \geq_{(\ref{eqk444})} \gamma|Z_u|\;,
$$
where in the fourth inequality we use the fact that $\gamma \leq \epsilon$.
A similar derivation would show that $|W''| \geq \gamma|Z_u|$.

So we are left with picking the sets $W'$ and $W''$. Let us focus on $W'$.
Consider some $1 \leq \ell' < \ell$. Since we assume that $A'$ is contained is one of the clusters
of ${\cal P}_{r_{\ell}}$ there must be a cluster $Y'_q \in {\cal P}_{r_{\ell'}}$ such that $A' \subseteq Y'_q$.
Take some $p \in N$ and let $Y'_{p} \in {\cal P}_{r_{\ell'}}$ be the cluster containing $Y_{p}$.
So we see that for each pair $(Y_{q},Y_{p})$, either all the vertices $(x,y) \in Y_{q} \times Y_{p}$ get an extra weight of
$\alpha_{\ell'}$ from that trap or none of them do (depending on whether $(Y'_q,Y'_{p})$ belongs to the trap placed on ${\cal P}_{r_{\ell'}}$).
So for each pair $(Y_{q},Y_{p})$ there is a subset $S_{p} \subseteq [\ell-1]$ representing those traps from which
$(Y_{q},Y_{p})$ got an extra weight. Recall now that $H$ contains only $\frac{1}{48}\sqrt{\log(1/\epsilon)}$ many traps, so there
are (much) less than $1/\epsilon^{1/10}$ ways to pick a set $S_p \subseteq [\ell -1]$.
So there must be a subset $N' \subseteq N$ such that $S_{p}=S_{p'}$ for all $p,p' \in N'$ and such that
$|W^1 \cap \bigcup_{p \in N'}Y_{p}| \geq \epsilon^{1/10}|W^1|$. We now take $W'=W^1 \cap \bigcup_{p \in N'}Y_{p}$ and take
$S'$ to be the subset of $[\ell -1]$ which is common to all $p \in N'$. Recapping the above, we see that if $\ell' \in S'$ then $d_{\ell'}(A',W')=\alpha_{{\ell'}}$ and
if $\ell' \not \in S'$ then $d_{\ell'}(A',W')=0$. We can now define $W''$ and $S''$ in a similar way, such that
if $\ell' \in S''$ then $d_{\ell'}(A',W'')=\alpha_{{\ell'}}$ and if $\ell' \not \in S''$ then $d_{\ell'}(A',W'')=0$.

If $S'=S''$ set
$\ell' = \ell$,
otherwise, let $\ell'$ be the smallest index that appears in exactly one of the sets $S'$ and $S''$.
Also, set $\alpha=\alpha_{\ell'}$.
Let us now compare $d_H(A',W')$ and $d_H(A',W'')$. By our choice of $\alpha$, the traps with weight larger than $\alpha$ have the same contribution to both $d_H(A',W')$ and $d_H(A',W'')$. Using again the way we chose $\alpha$ we get that
$$
\left|\sum^{\ell'}_{\ell=1}d_{\ell}(A',W') - d_{\ell}(A',W'')\right|=\alpha\;.
$$
Now observe that the total weight added by traps with weight smaller than $\alpha$ is bounded by $\alpha/4+\alpha/16... < \alpha/3$.
So after taking into account all
traps ${\cal P}_{r_1},\ldots,{\cal P}_{r_f}$ there is still a discrepancy of at least
$$
\left|\sum^{f}_{\ell=1}d_{\ell}(A',W') - d_{\ell}(A',W'')\right| \geq \alpha/2\;.
$$
As in previous proofs, we do not need to consider the weight coming from traps not placed on ${\cal P}_{r_1},\ldots,{\cal P}_{r_f}$ (that is, traps
placed on partitions refined by ${\cal P}_{r-1}$) since $A' \subseteq X_i \in {\cal P}_{r-1}$ and $W_u \subseteq X_j \in {\cal P}_{r-1}$.
Finally, by Fact \ref{ob1}
the total weight assigned to edges in $G$ is bounded by $1/4^{\sqrt{\log(1/\epsilon)}} \leq_{(\ref{eqk33})}\alpha/4$, so after taking into account all the weights
assigned to $(A',W')$ and $(A',W'')$ in $H$ we still have
$$
|d_H(A',W') - d_H(A',W'')| \geq \alpha/4 \geq_{(\ref{eqk33})} \epsilon \geq  \gamma\;.
$$
This proves (\ref{eqlast}) thus completing the proof.
\end{proof}

\begin{claim}\label{clm:k3} If there is a cluster $X \in {\cal P}_{r_{\ell_u}}$ such that $A' \subseteq X$ and
a cluster $Y \in {\cal P}_{r_{\ell_u}}$ such that $B' \subseteq Y$ then $(Z,Z_u)$ is not $\gamma$-regular.
\end{claim}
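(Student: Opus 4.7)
The plan is to violate $\gamma$-regularity of $(Z,Z_u)$ by producing a subset $W' \subseteq W_u$ with $|W'| \geq \gamma|Z_u|$ such that $|d_H(A',W') - d_H(B',W')| > 2\gamma$; combined with $|A'|,|B'| \geq \gamma|Z|$ from (\ref{eqk5}) and (\ref{eqk55}), this forces at least one of $(A',W')$ or $(B',W')$ to witness irregularity. First, $X \neq Y$: otherwise the trap on ${\cal P}_{r_{\ell_u}}$ would contribute identically to $d_{\ell_u}(A',W_u)$ and $d_{\ell_u}(B',W_u)$, contradicting the choice of $\ell_u$. Enumerating the clusters of ${\cal P}_{r_{\ell_u}}$ inside $X_j$ as $Y_1,\ldots,Y_h$ and letting $N_X,N_Y \subseteq [h]$ denote the trap-neighbors of $X$ and $Y$ in ${\cal O}_{r_{\ell_u}}$, expanding $d_{\ell_u}(A',W_u)$ and $d_{\ell_u}(B',W_u)$ in terms of $|W_u \cap Y_p|$ together with (\ref{eqk66}) gives $|W^X|+|W^Y| \geq 4\delta^2|W_u|/\alpha_{\ell_u}$, where $W^X := W_u \cap \bigcup_{p \in N_X \setminus N_Y}Y_p$ and $W^Y$ is defined symmetrically. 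Without loss of generality $|W^X| \geq 2\delta^2|W_u|/\alpha_{\ell_u}$.

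I then would pigeonhole over the lower-index traps to neutralize their cancellation potential. For each $\ell < \ell_u$, let $X'_\ell, Y'_\ell \in {\cal P}_{r_\ell}$ be the clusters containing $X$ and $Y$, and for each $p \in N_X \setminus N_Y$ let $Y'_\ell(p) \in {\cal P}_{r_\ell}$ be the cluster containing $Y_p$. Assign to $p$ the signature $(a_\ell(p),b_\ell(p))_{\ell < \ell_u}$ where $a_\ell(p) = \mathbf{1}[(X'_\ell,Y'_\ell(p)) \in E({\cal O}_{r_\ell})]$ and $b_\ell(p)$ is defined analogously with $Y'_\ell$ in place of $X'_\ell$. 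With at most $4^{\ell_u-1} \leq 4^f$ signatures available, pigeonhole produces a class $N'' \subseteq N_X \setminus N_Y$ with common signature $(a_\ell,b_\ell)$ satisfying $W' := W^X \cap \bigcup_{p \in N''}Y_p$ has $|W'| \geq |W^X|/4^f$; routine estimates using the bounds on $f$, $\delta$, and $r$ from (\ref{eqk3}), (\ref{eqk4}) and (\ref{eqk44}) then give $|W'| \geq \gamma|Z_u|$ for small enough $\epsilon$.

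On this $W'$ the trap contributions split cleanly: for $\ell < \ell_u$, $d_\ell(A',W') = \alpha_\ell a_\ell$ and $d_\ell(B',W') = \alpha_\ell b_\ell$; at $\ell = \ell_u$, $d_{\ell_u}(A',W') = \alpha_{\ell_u}$ and $d_{\ell_u}(B',W') = 0$; and for $\ell > \ell_u$ each difference is absolutely bounded by $\alpha_\ell$. Using $\sum_{\ell > \ell_u}\alpha_\ell \leq \alpha_{\ell_u}/3$ and the Fact \ref{ob1} bound on the $G$-contribution gives
\[
\bigl|d_H(A',W') - d_H(B',W')\bigr| \;\geq\; |\Delta| - \alpha_{\ell_u}/2, \quad \Delta := \alpha_{\ell_u} + \sum_{\ell < \ell_u}\alpha_\ell(a_\ell - b_\ell).
\]

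The main obstacle is lower-bounding $|\Delta|$, since $\sum_{\ell < \ell_u}\alpha_\ell$ can exceed $\alpha_{\ell_u}$ by many orders of magnitude. The key is a base-$4$ digit argument: each $\alpha_\ell$ is a negative integer power of $4$, and since $\ell \mapsto \alpha_\ell$ is strictly decreasing, for $\ell < \ell_u$ the ratio $\alpha_\ell/\alpha_{\ell_u}$ is a positive integer power of $4$ and hence divisible by $4$. Therefore $\Delta/\alpha_{\ell_u} = 1 + \sum_{\ell < \ell_u}(a_\ell - b_\ell)(\alpha_\ell/\alpha_{\ell_u})$ is an integer congruent to $1 \pmod 4$, in particular nonzero, so $|\Delta| \geq \alpha_{\ell_u}$. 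This yields $|d_H(A',W') - d_H(B',W')| \geq \alpha_{\ell_u}/2 > 2\gamma$ by (\ref{eqk33}) and $\gamma \leq \epsilon$, violating $\gamma$-regularity of $(Z,Z_u)$.
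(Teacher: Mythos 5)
Your proof is correct, but it takes a genuinely different route from the paper's. The paper disposes of Claim \ref{clm:k3} in a few lines by reducing to the two preceding claims: either one of $A'$, $B'$ has an intermediate density $d_{\ell_u}(\cdot\,,W_u)\in[\delta^2,\alpha_{\ell_u}-\delta^2]$, in which case Claim \ref{clm:k2} applies directly, or both densities sit within $\delta^2$ of the extremes $0$ and $\alpha_{\ell_u}$, and then (\ref{eqk66}) forces them to opposite extremes, giving a discrepancy of at least $\alpha_{\ell_u}-2\delta^2\geq 0.4\alpha_{\ell_u}$ so that Claim \ref{clm:k1} finishes. You instead give a self-contained construction of a single witness set $W'$ (your observations that $X\neq Y$, that $W_u\subseteq X_j$ so it is covered by the $Y_p$, and the size estimate $|W'|\geq\gamma|Z_u|$ via (\ref{eqk2}), (\ref{eqk44}) and (\ref{eqk444}) all check out); this is structurally closer to the paper's internal proof of Claim \ref{clm:k2}, which likewise isolates $W^1,W^2$ and pigeonholes over the signature sets $S_p$ of the coarser traps, except that you compare $A'$ and $B'$ against a common $W'$ rather than $A'$ against two sets $W',W''$. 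The most notable divergence is how the two arguments prevent the higher-weight traps from cancelling the level-$\ell_u$ discrepancy: the paper always singles out the extremal index on which the signatures differ and bounds the remaining geometric tail by $\alpha/3$, whereas you observe that $\Delta/\alpha_{\ell_u}\equiv 1\pmod 4$ because every ratio $\alpha_\ell/\alpha_{\ell_u}$ with $\ell<\ell_u$ is a positive power of $4$ (the traps carry consecutive weights $4^{-g}$), so $|\Delta|\geq\alpha_{\ell_u}$ in one stroke --- a clean $4$-adic argument. Your approach buys independence from Claims \ref{clm:k1} and \ref{clm:k2} at the cost of redoing the pigeonhole machinery; the paper buys brevity here because that cost was already paid inside Claim \ref{clm:k2}.
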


\begin{proof} If either $A'$ or $B'$ satisfies (\ref{eqk7}) then Claim \ref{clm:k2} implies that $(Z,Z_u)$ is not $\gamma$-regular. So suppose
both do not satisfy (\ref{eqk7}). Now note $d_{\ell_u}(A',W_u), d_{\ell_u}(B',W_u) \leq \alpha_{{\ell_u}}$ since $\alpha_{{\ell}}$ is the maximum
weight a pair of sets can get from the trap placed on ${\cal P}_{r_{\ell}}$. Recall that $\ell_u$ is an integer for which (\ref{eqk66}) holds hence one of the sets (say $A'$) satisfies $0 \leq d_{\ell_u}(A',W_u) \leq \delta^2$ while the other satisfies $\alpha_{{\ell_u}}- \delta^2 \leq d_{\ell_u}(B',W_u) \leq  \alpha_{{\ell_u}}$. But this means that
$$
|d_{{\ell}_u}(A',W_u)- d_{{\ell}_u}(B',W_u)| \geq \alpha_{{\ell_u}}- 2\delta^2 \geq_{(\ref{eqk44})} \alpha_{{\ell_u}}/2\;,
$$
so $(Z,Z_u)$ is not $\gamma$-regular by Claim \ref{clm:k1}.
\end{proof}

We are now ready to complete the proof of Claim \ref{clm:k4}. We know from Claim \ref{clm:k3} that one of the sets $A'$ or $B'$ must satisfy the first requirement of the claim. Suppose it is $A'$.
If $A'$ also satisfies the second item then we are done, so suppose it does not.

If $B'$ also satisfies the first requirement of the claim,
then since $\ell_u$ is chosen to satisfy (\ref{eqk66}) and since we assume that $A'$ does not satisfy the second requirement of the lemma, we get that $B'$ must satisfy the second requirement and we are done.

So suppose now that the $B'$ does not satisfy the first item. If $\delta^2 \leq d_{\ell_u}(B',W_u) \leq \alpha_{\ell_u}-\delta^2$ then by Claim \ref{clm:k2}
$(Z,Z_u)$ is not $\gamma$-regular, which contradicts the assumption of Claim \ref{clm:k4} that $(Z,Z_u)$ is $\gamma$-regular. Finally, if either $d_{\ell_u}(B',W_u) \geq \alpha_{\ell_u}-\delta^2$ or $d_{\ell_u}(B',W_u) \leq \delta^2$ we can combine this with the assumption that $A'$ does not satisfy the second requirement of the claim to get that
$$
|d_{\ell_u}(A',W_u)-d_{\ell_u}(B',W_u)| \geq \frac12\alpha_{\ell_u}-3\delta^2 >_{(\ref{eqk44})} 0.4\alpha_{\ell_u}\;.
$$
Claim \ref{clm:k1} then implies that $(Z,Z_u)$ is not $\gamma$-regular which again contradicts the assumption of Claim \ref{clm:k4}.

\bigskip

\vspace{0.1cm} \noindent {\bf Note added.}\, After completing this paper, we learned that D. Conlon and J. Fox have independently (and simultaneously)
obtained a result similar to the one stated in Theorem \ref{thm:main}. Their proof gives a lower bound of $W(1/\epsilon^{c})$ for some $c>0$ to the strong regularity lemma.


\end{document}